\newcommand*{\rom}[1]{\expandafter\@slowromancap\romannumeral #1@}
  \theoremstyle{plain}
    \newtheorem{thm}{Theorem}[section]
    \newtheorem{proposition}[thm]{Proposition}
     \newtheorem{theorem}[thm]{Theorem}
    \newtheorem{corollary}[thm]{Corollary}
    \newtheorem{subsec}[thm]{}
\theoremstyle{definition}
    \newtheorem{definition}[thm]{Definition}
        \newtheorem{remark}[thm]{Remark}
    \newtheorem{exam}[thm]{Example}
\theoremstyle{remark}
\title{}
\author{}
\date{}
\begin{document}



\title[]{Applications of Poisson cohomology to the inducibility problems and study of deformation maps}


\author{Apurba Das}
\address{Department of Mathematics,
Indian Institute of Technology, Kharagpur 721302, West Bengal, India.}
\email{apurbadas348@gmail.com, apurbadas348@maths.iitkgp.ac.in}

\author{Ramkrishna Mandal}
\address{Department of Mathematics, Indian Institute of Technology, Kharagpur 721302, West Bengal, India}
\email{ramkrishnamandal430@gmail.com}

\author{Anupam Sahoo}
\address{Department of Mathematics,
Indian Institute of Technology, Kharagpur 721302, West Bengal, India.}
\email{anupamsahoo23@gmail.com}

\begin{abstract}
This paper provides some applications of the Poisson cohomology groups introduced by Flato, Gerstenhaber and Voronov. Given an abelian extension of a Poisson algebra by a representation, we first investigate the inducibility of a pair of Poisson algebra automorphisms and show that the corresponding obstruction lies in the second Poisson cohomology group. Consequently, we obtain the Wells exact sequence connecting various automorphism groups and the second Poisson cohomology group. Subsequently, we also consider the inducibility for a pair of Poisson algebra derivations, obtain the obstruction and construct the corresponding Wells-type exact sequence. 

To get another application, we introduce the notion of a `deformation map' in a proto-twilled Poisson algebra. A deformation map unifies various well-known operators such as Poisson homomorphisms, Poisson derivations, crossed homomorphisms, Rota-Baxter operators of any weight, twisted Rota-Baxter operators, Reynolds operators and modified Rota-Baxter operators on Poisson algebras. We show that a deformation map $r$ induces a new Poisson algebra structure and a suitable representation of it. The corresponding Poisson cohomology is defined to be the cohomology of the deformation map $r$. Finally, we study the formal deformations of the operator $r$ in terms of the cohomology.
\end{abstract}

\maketitle

\medskip

\medskip

\noindent  {\sf 2020 MSC classification.} 17B63, 17B38, 17B40, 17B56.
    
\noindent {\sf Keywords.} Poisson algebras, Automorphisms, Derivations, Deformation maps, Rota-Baxter operators, Cohomology.

\noindent

\thispagestyle{empty}

\tableofcontents

\vspace{0.2cm}

\section{Introduction}\label{sec1}
Algebraic structures are better understood by associating invariants with them. Among others, the cohomology of an algebra (e.g. associative, Lie or any specific type) is an important invariant central in many studies of the underlying algebra. For example, the cohomology of an associative algebra $A$ (also known as the Hochschild cohomology of $A$) is closely related to abelian extensions of $A$ and plays a prominent role in controlling the formal one-parameter deformations of $A$ \cite{hoch}, \cite{gers}. Subsequently, the analogous results were obtained by Nijenhuis and Richardson for the cohomology of a Lie algebra $\mathfrak{g}$ (also known as the Chevalley-Eilenberg cohomology of $\mathfrak{g}$) \cite{nij-ric-1}, \cite{nij-ric-2}. On the other hand, an application of the cohomology of an abstract group to the inducibility problem was found by Wells \cite{wells} (see also \cite{jin-liu}, \cite{passi}). Later, his theory and results were followed in the context of Lie algebras and associative algebras \cite{bar-singh}, \cite{tan-xu}, \cite{bai-zhang}. An important connection between the Chevalley-Eilenberg cocycles of a Lie algebra and skeletal $2$-term $L_\infty$-algebras was discovered by Baez and Crans \cite{baez-crans}. Finally, in recent times, people have been very interested in the cohomology of various linear operators (e.g. homomorphisms, derivations, crossed homomorphisms, Rota-Baxter operators, etc.) defined in an algebra \cite{sheng-quasi}, \cite{das-mandal}. It turns out that any of these operators induces a new algebra structure and a suitable representation of it. The cohomology of the induced algebra with coefficients in that suitable representation is defined to be the cohomology of the prescribed operator. Hence the cohomology of an algebra is also useful in defining the cohomology of some linear operators.


\medskip

A Poisson algebra is an important algebraic structure that appears in the mathematical formulation of Hamiltonian mechanics \cite{vaisman}. These algebras are also central in the study of quantum groups \cite{drin}. Recall that a Poisson algebra is a commutative associative algebra endowed with a Lie bracket that satisfies the Leibniz rule. The algebra of observables in symplectic geometry, the space of smooth functions on the dual of a Lie algebra, vertex operator algebras and the semi-classical limit of any associative formal deformations of a commutative associative algebra inherit Poisson algebra structures \cite{vaisman}, \cite{bor}, \cite{kont}. Representations or modules over Poisson algebras are also well-studied in the literature \cite{fgv}, \cite{caressa}. In \cite{fgv} Flato, Gerstenhaber and Voronov defined the cohomology of a Poisson algebra by considering a bicomplex unifying the classical Harrison cochain complex of the underlying commutative associative algebra and the Chevalley-Eilenberg cochain complex of the underlying Lie algebra (see also \cite{bao-ye},\cite{bao-ye2}). In the same paper, the authors also showed that the cohomology of a Poisson algebra is closely related to abelian extensions and controls the simultaneous deformations of both the commutative associative product and the Lie bracket. Our aim in this paper is to provide the applications of Poisson cohomology to the inducibility problems and the cohomological study of various linear operators defined on a Poisson algebra (see below for brief discussions). Further applications of Poisson cohomology to the appropriate notions of $P_\infty$-algebras (strongly homotopy Poisson algebras) and Poisson $2$-algebras (categorification of Poisson algebras) will be discussed in a separate article.


\subsection{The inducibility problems} As mentioned earlier, the inducibility problem for automorphisms was first considered in a paper by Wells. Subsequently, the same problem was generalized for derivations \cite{tan-xu}, \cite{bai-zhang}.
In the context of Lie algebras, the inducibility problems for automorphisms and derivations can be stated as follows. First, let $0 \rightarrow V \xrightarrow{i} E \xrightarrow{p} \mathfrak{g} \rightarrow 0$ be an abelian extension of a Lie algebra $\mathfrak{g}$ by a given representation $V$ (i.e., the induced representation on $V$ coming from the abelian extension coincides with the prescribed one). (i) Let $\mathrm{Aut}_V (E)$ be the set of all Lie algebra automorphisms $\gamma \in \mathrm{Aut} (E)$ that satisfies $\gamma (V) \subset V$. Then there is a group homomorphism $\tau : \mathrm{Aut}_V (E) \rightarrow \mathrm{Aut} (V) \times \mathrm{Aut} (\mathfrak{g})$, $\tau (\gamma) = (\gamma |_V , p \gamma s)$, where $s$ is a section of the map $p$. Then the inducibility problem for automorphisms asks to find a necessary and sufficient condition under which a pair  $(\beta, \alpha) \in \mathrm{Aut} (V) \times \mathrm{Aut} (\mathfrak{g})$ of Lie algebra automorphisms lies in the image of $\tau$.
(ii) Let $\mathrm{Der}_V (E)$ be the space of all Lie algebra derivations $d \in \mathrm{Der} (E)$ that satisfies $d (V) \subset V$. As before, there is a map $\eta : \mathrm{Der}_V (E) \rightarrow \mathrm{Der} (V) \times \mathrm{Der}(\mathfrak{g})$ given by $\eta (d) = (d |_V , pds)$, where $s$ is a section of $p$. The inducibility problem for derivations then asks to find a necessary and sufficient condition under which a pair $(d_V, d_\mathfrak{g}) \in \mathrm{Der} (V) \times \mathrm{Der}(\mathfrak{g})$ of Lie algebra derivations lies in the image of $\eta$. It is easy to see that the above inducibility problems make sense in the context of other types of algebras. See \cite{tan-xu}, \cite{bai-zhang}, \cite{mishra-das}, \cite{goswami} and the references therein for these problems on different algebras. 

\medskip

Here we shall study the inducibility problems in the context of Poisson algebras. For these, we will introduce suitable Wells maps (one for Poisson automorphisms and another for Poisson derivations) in the context and show that the obstructions for both problems lie in the second Poisson cohomology group. Consequently, we obtain two Wells exact sequences, one connecting various Poisson automorphism groups and the second Poisson cohomology group, and the other connecting various Poisson derivation spaces and the second Poisson cohomology group.

\subsection{Deformation maps}
The notion of deformation maps in a twilled Lie algebra (or a matched pair of Lie algebras) was introduced by Agore and Militaru to study the classifying complements problem \cite{agore0}. It turns out that a deformation map in a twilled Lie algebra unifies Lie algebra homomorphisms, derivations, crossed homomorphisms, and (relative) Rota-Baxter operators of weight $0$, $1$ \cite{sheng-quasi}. However, there are operators similar to Rota-Baxter operators (e.g. twisted Rota-Baxter operators, Reynolds operators and modified Rota-Baxter operators) on Lie algebras that cannot be viewed as deformation maps in twilled Lie algebras. To put this list of operators in the same unified framework, Jiang, Sheng and Tang \cite{sheng-quasi} considered quasi-twilled Lie algebras and introduced two types of deformation maps (type-I and type-II). They further defined the cohomology groups of these deformation maps that unified the existing cohomology groups of all the well-known operators mentioned above.

\medskip

Our primary aim in this part is to define the cohomology of Poisson homomorphisms, Poisson derivations, crossed homomorphisms, Rota-Baxter operators of weight $0$ and $1$, twisted Rota-Baxter operators, Reynolds operators and modified Rota-Baxter operators on Poisson algebras. To unify all these operators by a single map (unlike type-I and type-II deformation maps), one needs to consider a bigger object than a quasi-twilled Poisson algebra. Namely, we will consider a proto-twilled Poisson algebra, which is a Poisson algebra whose underlying vector space has a direct sum decomposition into subspaces. A quasi-twilled Poisson algebra is a particular case in which a decomposed subspace is a Poisson subalgebra, and a twilled Poisson algebra is a case in which both the decomposed subspaces are Poisson subalgebras. We define the notion of a deformation map in a proto-twilled Poisson algebra (generalizing deformation map in a twilled Poisson algebra \cite{agore}) and show that a deformation map unifies all the above-mentioned operators on Poisson algebras. We show that a deformation map $r$ in a proto-twilled Poisson algebra induces a new Poisson algebra and a suitable representation. The corresponding Poisson cohomology is the cohomology of the deformation map $r$. In particular, we obtain the cohomologies of all the well-known operators mentioned above. Finally, we study linear and formal one-parameter deformations of the operator $r$ in terms of its cohomology.


\subsection{Organization} The paper is organized as follows. In Section \ref{sec2}, we recall some necessary background on Poisson algebras, including their cohomology. The inducibility problems for Poisson automorphisms and Poisson derivations are discussed in Sections \ref{sec3} and \ref{sec4}, respectively. We consider proto-twilled Poisson algebras and deformation maps in Section \ref{sec5}. In particular, we observe that various well-known operators on a Poisson algebra can be viewed as deformation maps in suitable proto-twilled Poisson algebras. In Section \ref{sec6}, we introduce the cohomology of a deformation map. In particular, we define the cohomologies of well-known operators on Poisson algebras. Finally, in Section \ref{sec7}, we study formal one-parameter deformations of a deformation map $r$ using the cohomology.

All vector spaces and algebras, (multi)linear maps, tensor products and wedge products are over a field {\bf k} of characteristic $0$ unless specified otherwise.


\medskip

\section{Poisson algebras and the Flato-Gerstenhaber-Voronov cohomology}\label{sec2}

In this section, we recall representations and cohomology of Poisson algebras as introduced by Flato, Gerstenhaber and Voronov \cite{fgv}. 

\begin{definition}
   A {\bf Poisson algebra} is a triple $(P, ~ \!  \cdot ~ \! , \{ ~, ~ \} )$ consisting of a commutative associative algebra $(P, ~ \! \cdot ~ \!)$ and a Lie algebra $(P, \{ ~, ~ \})$ both defined on a same vector space $P$ satisfying additionally the following Leibniz rule:
   \begin{align*}
       \{ x, y \cdot z \} = \{ x, y \} \cdot z + y \cdot \{ x, z \}, \text{ for } x, y, z \in P.
   \end{align*}
\end{definition}

Let $ (P, ~ \!  \cdot ~ \! , \{ ~, ~ \} )$ be a Poisson algebra. A {\bf Poisson subalgebra} of $(P, ~ \!  \cdot ~ \! , \{ ~, ~ \} )$ is a vector subspace $Q \subset P$ that is a subalgebra for both the commutative associative algebra $(P, ~ \!  \cdot ~ \!  )$ and the Lie algebra $(P, \{ ~, ~ \})$. 

Poisson algebras are the underlying algebraic structure of Poisson manifolds. The algebra of observables in symplectic geometry, the space of smooth functions $C^\infty (\mathfrak{g}^*)$ on the dual of a Lie algebra $\mathfrak{g}$, vertex operator algebras and the semi-classical limit of any associative deformations of a commutative algebra inherit Poisson algebra structures.

\begin{definition}
    Let $ (P, ~ \!  \cdot ~ \! , \{ ~, ~ \} )$ be a Poisson algebra. A {\bf representation} of $ (P, ~ \!  \cdot ~ \! , \{ ~, ~ \} )$ is a triple $(V, \mu, \rho)$ consisting of a vector space $V$ endowed with linear maps $\mu, \rho : P \rightarrow \mathrm{End}(V)$ such that
\begin{itemize}
    \item[(i)]  $(V, \mu)$ is a module over the commutative associative algebra $(P, ~ \! \cdot ~ \! )$, i.e., $\mu_{ x \cdot y} = \mu_x \mu_y$,
    \item[(ii)] $(V, \rho)$ is a representation of the Lie algebra $(P, \{ ~, ~ \})$, i.e., $\rho_{ \{ x, y \} } = \rho_x \rho_y - \rho_y \rho_x$,
\end{itemize}
for all $x, y \in P$, and the following compatibilities are hold:
    \begin{align*}
        \mu_{ \{ x, y \} } =~& \rho_x \circ \mu_y - \mu_y \circ \rho_x,\\
        \rho_{x \cdot y} =~& \mu_x \circ \rho_y + \mu_y \circ \rho_x.
    \end{align*}
\end{definition}

Let $ (P, ~ \!  \cdot ~ \! , \{ ~, ~ \} )$ be a Poisson algebra. Then the triple $(P, \mu_\mathrm{ad}, \rho_\mathrm{ad})$ is a representation, where $(\mu_\mathrm{ad})_x y = x \cdot y$ and $(\rho_\mathrm{ad})_x y = \{x, y \}$, for all $x, y \in P$. This is called the {\em adjoint representation} or the regular representation of $ (P, ~ \!  \cdot ~ \! , \{ ~, ~ \} )$.

For any Poisson algebra $ (P, ~ \!  \cdot ~ \! , \{ ~, ~ \} )$, the triple $(P^*, \mu_\mathrm{coad}, \rho_\mathrm{coad})$ is also a representation, where $(\mu_\mathrm{coad})_x (\zeta) (y) = \langle \zeta, x \cdot y \rangle$ and $(\rho_\mathrm{coad})_x (\zeta) (y) = - \langle \zeta, \{x, y \} \rangle$, for all $x, y \in P$ and $\zeta \in P^*$. This is called the {\em coadjoint representation}. In general, given any representation of a Poisson algebra, one can dualize the representation (see for example, \cite{ni-bai}). With this, the coadjoint representation is simply the dual of the adjoint representation.

\medskip

Let $ (P, ~ \!  \cdot ~ \! , \{ ~, ~ \} )$ be a Poisson algebra and $(V, \mu, \rho)$ be a representation of it. A linear map $f \in \mathrm{Hom} (P^{\otimes m} \otimes \wedge^n P, V)$ is called a {\em $(m,n)$-cochain} if
\begin{align*}
    \sum_{\sigma \in \mathrm{Sh}_{(i, m-i)}} (-1)^\sigma ~ f \big(  ( a_{\sigma^{-1} (1)} \otimes \cdots \otimes a_{\sigma^{-1} (i)} \otimes a_{\sigma^{-1} (i+1)} \otimes \cdots \otimes a_{\sigma^{-1} (m)} ) \otimes (x_1 \wedge \cdots \wedge x_n )   \big) = 0,
\end{align*}
for all $0 < i < m$ and $a_1, \ldots, a_m, x_1, \ldots, x_n \in P$. We denote the set of all $(m, n)$-cochains by $C^{m,n} (P, V)$. Then for any $k \geq 0$, the space of $k$-cochains is defined by $ C^k (P, V) := \bigoplus_{\substack{m+n = k \\ m \neq 1}} C^{m, n} (P, V)$
and the coboundary map $\delta_\mathrm{FGV} :  C^k (P, V)  \rightarrow  C^{k+1} (P, V)$ is given by
\begin{align*}
    \delta_\mathrm{FGV} (f) = \delta_\mathrm{H}^{m,n} (f) + (-1)^m ~ \! \delta_\mathrm{CE}^{m,n} (f), 
\end{align*}
for $ f \in C^{m,n}(P, V)$ with $m + n = k ~ (m \neq 1)$. Here the map $\delta_\mathrm{H}^{m,n} : C^{m,n} (P, V) \rightarrow C^{m+1, n} (P, V)$ is the Harrison coboundary operator
\begin{align*}
     ( \delta_\mathrm{H}^{m,n} f ) ( (a_1 \otimes \cdots \otimes a_{m+1}) ~\otimes~& (x_1 \wedge \cdots \wedge x_n) ) 
     = \mu_{a_1} f ( (a_2 \otimes \cdots \otimes a_{m+1})\otimes (x_1 \wedge \cdots \wedge x_n) ) \\
     &+ \sum_{i=1}^m (-1)^i f (  (a_1 \otimes \cdots \otimes a_i \cdot a_{i+1} \otimes \cdots \otimes a_{m+1}) \otimes (x_1 \wedge \cdots \wedge x_n) ) \\
     &+ (-1)^{m+1} ~ \!  \mu_{a_{m+1}}f ( (a_1 \otimes \cdots \otimes a_{m})\otimes (x_1 \wedge \cdots \wedge x_n) )
\end{align*}
and $\delta_\mathrm{H}^{0, n} : C^{0, n} (P, V) \rightarrow C^{2, n-1} (P, V)$ is the composition of the natural inclusion
\begin{align*}
    C^{0, n} (P, V) = \mathrm{Hom} (\wedge^n P, V) \hookrightarrow \mathrm{Hom} (P \otimes \wedge^{n-1} P, V) = C^{1, n-1} (P, V)
\end{align*}
with the Harrison coboundary operator $\delta_\mathrm{H}^{1, n-1} : C^{1, n-1} (P, V) \rightarrow C^{2, n-1} (P, V)$. On the other hand, the map $\delta^{m,n}_\mathrm{CE} : C^{m,n} (P, V) \rightarrow C^{m, n+1} (P, V)$ is the Chevalley-Eilenberg coboundary operator
\begin{align*}
    (\delta^{m,n}_\mathrm{CE} f) & (  (a_1 \otimes \cdots \otimes a_{m})\otimes (x_1 \wedge \cdots \wedge x_{n+1})) \\
    &= \sum_{i=1}^{n+1} (-1)^{i+1} \bigg(  \rho_{x_i} f ( (a_1 \otimes \cdots \otimes a_{m}) \otimes (x_1 \wedge \cdots \wedge \widehat{x_i} \wedge \cdots  \wedge x_{n+1}) )  \\
    & \quad - \sum_{j=1}^m f (  (a_1 \otimes \cdots \otimes \{ x, a_j \} \otimes \cdots \otimes a_{m}) \otimes (x_1 \wedge \cdots \wedge \widehat{x_i} \wedge \cdots  \wedge x_{n+1})    )    \bigg) \\
    & \quad + \sum_{1 \leq i < j \leq n+1} (-1)^{i+j} f (   (a_1 \otimes \cdots \otimes a_{m}) \otimes ( \{ x_i, x_j \} \wedge x_1 \wedge \cdots \wedge \widehat{x_i} \wedge \cdots \wedge \widehat{x_j} \wedge \cdots \wedge x_{n+1})   ),
\end{align*}
for $f \in C^{m,n} (P, V)$ and $a_1, \ldots, a_{m}, x_1, \ldots, x_{n+1} \in P$. Then it turns out that $\{ C^\bullet (P, V), \delta_\mathrm{FGV} \}$ is a cochain complex, called the {\em Flato-Gerstenhaber-Voronov (FGV) cochain complex}. The set of all $n$-cocycles is denoted by $Z^n (P, V)$ and the set of all $n$-coboundaries is denoted by $B^n (P, V)$. The corresponding cohomology groups are called the (FGV) cohomology of the Poisson algebra $(P, ~ \! \cdot ~ \!, \{ ~,~ \})$ with coefficients in the representation $(V, \mu, \rho)$, and they are simply denoted by $H^\bullet_\mathrm{FGV} (P, V)$.

It follows from the above discussions that a Poisson $2$-cocycle can be described by a pair $(h, H)$ consisting of a symmetric map $h : P^{\otimes 2} \rightarrow V$ and a skew-symmetric map $H : \wedge^2 P \rightarrow V$ that satisfy the following conditions:
\begin{align}\label{pois-co1}
 \mu_x h (y, z) - h(x \cdot y, z) + h (x, y \cdot z) - \mu_z h (x, y) = 0,
\end{align}
\begin{align}\label{pois-co2}
    \rho_x H (y, z) + \rho_y H (z, x) + \rho_z H (x, y) + H (x, \{ y, z\}) + H (y, \{ z, x \}) + H (z, \{x, y \}) = 0,
\end{align}
\begin{align}\label{pois-co3}
     \rho_x h (y, z) - h (\{ x, y \}, z) - h (y, \{ x, z \} ) + H (x, y \cdot z ) - \mu_y H (x, z) - \mu_z H (x, y)  = 0,
\end{align}
for all $x, y, z \in P.$ Further, two Poisson $2$-cocycles $(h, H)$ and $(h', H')$ are {\em cohomologous} if there exists a linear map $\varphi : P \rightarrow V$ such that for all $x, y \in P,$
\begin{align}\label{cobound}
     h(x, y) - h' (x, y) = \mu_x \varphi (y)- \varphi (x \cdot y ) + \mu_y \varphi (x), \quad H (x, y) - H' (x, y) = \rho_x \varphi (y) - \varphi (\{x, y \}) - \rho_y \varphi (x).
\end{align}

\section{Inducibility of Poisson automorphisms}\label{sec3}
In this section, we study the inducibility problem for a pair of Poisson algebra automorphisms in a given abelian extension of a Poisson algebra by a representation. To find an answer to this problem, we define the Wells map in the context of Poisson algebras and show that a pair of Poisson algebra automorphisms is inducible if and only if its image under the Wells map (the image lies in the second cohomology group) vanishes identically.

\medskip

Let 
\begin{align}\label{abelian-aut}
\xymatrixrowsep{0.36cm}
\xymatrixcolsep{0.36cm} \xymatrix{
    0 \ar[r] & V \ar[r]^i & E \ar[r]^p & P \ar[r] & 0
    }
    \end{align}
    be an abelian extension of the Poisson algebra $(P, ~ \! \cdot ~ \!, \{ ~, ~ \})$ by a representation $(V, \mu, \rho)$.
 Let $\mathrm{Aut}_{V} (E)$ be the group of all Poisson algebra automorphisms $\gamma \in \mathrm{Aut} (E)$ that satisfies $\gamma ({V}) \subset V$. Hence, for any $\gamma \in \mathrm{Aut}_{V} (E)$ we naturally have $\gamma \big|_{V} \in \mathrm{Aut} (V)$ a Poisson algebra automorphism. Next, for any linear section $s: P \rightarrow E$ of the map $p$ (i.e., $p \circ s = \mathrm{id}_{P}$), we can also define a map $\overline{\gamma} : P \rightarrow P$ by $\overline{\gamma} (x) := p \gamma s (x)$, for $x \in P$. It can be easily checked that the map $\overline{\gamma}$ doesn't depend on the choice of the section $s$. Moreover, $\overline{\gamma}$ is a bijection on the set $P$. Let $( h, H)$ be the Poisson $2$-cocycle corresponding to the given abelian extension and induced by the section $s$ \cite{agore}. Then 
\begin{align*}
    h (x, y) := s(x) \cdot_E s(y) - s (x \cdot y) \quad  \text{ and } \quad H (x, y) = \{ s(x) , s(y) \}_E - s \{ x, y \}, \text{ for } x, y \in P.
\end{align*}
Hence we get 
\begin{align*}
    \overline{\gamma} (x \cdot y) = p \gamma ( s (x \cdot y)) =~& p \gamma \big(  s(x) \cdot_E s (y) - h (x, y) \big) \\
    =~& p \gamma \big(  s(x) \cdot_E s (y)  \big) \quad (\because ~ \gamma (V) \subset V ~~ \text{ and }~~ p \big|_{V} = 0) \\
   =~& p \gamma s (x) \cdot p \gamma s (y) = \overline{\gamma} (x) \cdot \overline{\gamma} (y).
\end{align*}
In the same way, we can show that $\overline{\gamma} ( \{ x, y \}) = \big\{ \overline{\gamma} (x), \overline{\gamma} (y) \}$, for all $x, y \in P$. Thus, $\overline{\gamma} : P \rightarrow P$ is a Poisson algebra automorphism, i.e., $\overline{\gamma} \in \mathrm{Aut} (P)$. Hence, we obtain a group homomorphism 
\begin{align*}
    \tau : \mathrm{Aut}_{V} (E) \rightarrow \mathrm{Aut} (V) \times \mathrm{Aut} (P) ~~ \text{ given by }~~ \tau (\gamma) := (\gamma \big|_{V}, \overline{\gamma}).
\end{align*}
Keeping this in mind, we say that a pair of Poisson algebra automorphisms $(\beta, \alpha) \in \mathrm{Aut} (V) \times \mathrm{Aut} (P)$ is {\bf inducible} if this pair lies in the image of the map $\tau$. Equivalently, $(\beta, \alpha)$ is inducible if there exists a Poisson algebra automorphism $\gamma \in \mathrm{Aut}_{V} (E)$ such that $\gamma \big|_{V} = \beta$ and $\overline{\gamma} = \alpha$.

Our aim in this section is to find a necessary and sufficient condition (independent of the choice of any section) for a pair of Poisson algebra automorphisms in $\mathrm{Aut} (V) \times \mathrm{Aut} (P)$ to be inducible. We will start with the following result.

\begin{proposition}\label{prop-ind-first}
Let (\ref{abelian-aut}) be an abelian extension of the Poisson algebra $(P, ~ \! \cdot ~ \!, \{ ~, ~ \})$ by a representation $(V, \mu, \rho)$. If the pair $(\beta, \alpha) \in \mathrm{Aut} (V) \times \mathrm{Aut} (P)$ of Poisson algebra automorphisms is inducible then
\begin{align}\label{c-iden}
    \beta (\mu_x v ) = \mu_{\alpha (x)} \beta (v) ~~~~~ \text{ and } ~~~~~ \beta (\rho_x v) = \rho_{\alpha (x)} \beta (v), \text{ for any } x \in P, ~ \! \! v \in V.
\end{align}
\end{proposition}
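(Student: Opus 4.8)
The plan is to unwind the definition of inducibility and exploit the compatibility between $\gamma$ and the module/representation structures. Since $(\beta,\alpha)$ is inducible, there exists $\gamma\in\mathrm{Aut}_V(E)$ with $\gamma\big|_V=\beta$ and $\overline{\gamma}=\alpha$. The key structural fact I would use is how $V$ sits inside $E$ as an ideal: the abelian extension endows $E$ with a Poisson structure for which the actions $\mu,\rho$ of $P$ on $V$ are recovered by lifting elements of $P$ along a section $s$ and then multiplying (resp. bracketing) in $E$. Concretely, for $x\in P$ and $v\in V$ (identified with $i(v)\in E$) one has $s(x)\cdot_E i(v)=i(\mu_x v)$ and $\{s(x),i(v)\}_E=i(\rho_x v)$, because the extension being \emph{abelian} forces $V\cdot_E V=0$ and $\{V,V\}_E=0$, while the induced representation on $V$ is required to coincide with the prescribed $(\mu,\rho)$.

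First I would fix a section $s$ of $p$ and, identifying $V$ with its image under $i$, record the two identities above expressing $\mu_x v$ and $\rho_x v$ in terms of the product and bracket of $E$. Next I would apply $\gamma$ to $s(x)\cdot_E v$. Since $\gamma$ is a Poisson algebra automorphism of $E$, it is in particular a homomorphism for $\cdot_E$, so
\begin{align*}
    \gamma(s(x)\cdot_E v)=\gamma(s(x))\cdot_E \gamma(v).
\end{align*}
On the left, $s(x)\cdot_E v=\mu_x v\in V$, so $\gamma(\mu_x v)=\beta(\mu_x v)$. On the right, $\gamma(v)=\beta(v)\in V$, and I would rewrite $\gamma(s(x))$ using the section: write $\gamma(s(x))=s(\overline{\gamma}(x))+(\text{something in }V)=s(\alpha(x))+w$ for some $w\in V$, which is possible because $p\gamma s=\overline{\gamma}=\alpha$ forces $\gamma(s(x))-s(\alpha(x))\in\ker p=V$. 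Then the product $\gamma(s(x))\cdot_E\beta(v)=\bigl(s(\alpha(x))+w\bigr)\cdot_E\beta(v)$, and the term $w\cdot_E\beta(v)$ vanishes because both lie in $V$ and the extension is abelian. Hence $\gamma(s(x))\cdot_E\beta(v)=s(\alpha(x))\cdot_E\beta(v)=\mu_{\alpha(x)}\beta(v)$, yielding the first identity $\beta(\mu_x v)=\mu_{\alpha(x)}\beta(v)$.

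The second identity follows by the identical argument applied to the Lie bracket: using that $\gamma$ preserves $\{\,,\,\}_E$, I would compute $\gamma(\{s(x),v\}_E)=\{\gamma(s(x)),\gamma(v)\}_E$, observe the left side equals $\beta(\rho_x v)$, decompose $\gamma(s(x))=s(\alpha(x))+w$ as before, and discard $\{w,\beta(v)\}_E=0$ by abelianness to obtain $\{s(\alpha(x)),\beta(v)\}_E=\rho_{\alpha(x)}\beta(v)$. I do not expect a serious obstacle here; the only point requiring care is the bookkeeping identifying the induced $P$-action on $V\subset E$ with the prescribed $(\mu,\rho)$ and justifying that the correction term $w$ drops out precisely because $V\cdot_E V=0=\{V,V\}_E$. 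Once these identifications are made explicit, the two equations in \eqref{c-iden} are immediate consequences of $\gamma$ being a Poisson automorphism that restricts to $\beta$ on $V$ and induces $\alpha$ on $P$.
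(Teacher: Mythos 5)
Your proposal is correct and follows essentially the same route as the paper: both apply the automorphism $\gamma$ witnessing inducibility to $s(x)\cdot_E v$ and $\{s(x),v\}_E$, use that $\gamma$ is a Poisson homomorphism restricting to $\beta$ on $V$, and discard the correction term $w=(\gamma s - s\alpha)(x)\in \ker(p)\cong V$ via the triviality of the Poisson structure on $V$. Your write-up merely makes explicit (the decomposition $\gamma(s(x))=s(\alpha(x))+w$ and the identification of the induced action with $(\mu,\rho)$) what the paper compresses into its final remark.
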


\begin{proof}
    Since $(\beta, \alpha)$ is inducible, there exists a Poisson algebra automorphism $\gamma \in \mathrm{Aut}_V (E)$ such that $\gamma |_V = \beta$ and $\overline{\gamma} = \alpha$. Hence for any $x \in P$ and $v \in V$,
    \begin{align*}
         &\beta (\mu_x v) = \gamma (s(x) \cdot_E v) = \gamma s (x) \cdot_E \gamma (v)         = s \alpha (x) \cdot_E \beta (v)  = \mu_{\alpha (x)} \beta (v), \\
        & \beta (\rho_x v) = \gamma \{ s(x) , v \}_E = \{ \gamma s (x) , \gamma (v) \}_E  = \{s \alpha (x), \beta (v) \}_E  = \rho_{\alpha (x)} \beta(v).
    \end{align*}
    In both the above calculations, we have used that $(\gamma s - s \alpha) (x) \in \mathrm{ker} (p) = \mathrm{im } (i) \cong V$ and $V$ has trivial Poisson structure.
\end{proof}

Given an abelian extension (\ref{abelian-aut}), we set
\begin{align*}
    \mathcal{C}_{\mu, \rho} = \{ (\beta, \alpha) \in \mathrm{Aut} (V) \times \mathrm{Aut} (P) ~ \! | ~ \! \text{the identities in } (\ref{c-iden}) \text{ hold} \}.
\end{align*}
Then it follows that $\mathcal{C}_{\mu, \rho}$ is a subgroup of $ \mathrm{Aut} (V) \times \mathrm{Aut} (P)$. Take an arbitrary element $(\beta, \alpha) \in \mathcal{C}_{\mu, \rho}$. For any linear maps $h : P^{\otimes 2} \rightarrow V$ and $H : \wedge^2 P \rightarrow V$, we define maps $h_{(\beta, \alpha)} : P^{\otimes 2} \rightarrow V$ and $H_{(\beta, \alpha)}  : \wedge^2 P \rightarrow V$ respectively by
\begin{align*}
    h_{(\beta, \alpha)} (x, y) := \beta \big( h ( \alpha^{-1} (x), \alpha^{-1} (y)) \big), \quad H_{(\beta, \alpha)} (x, y) := \beta \big( H ( \alpha^{-1} (x), \alpha^{-1} (y)) \big), 
\end{align*}
for $x, y \in P.$ Then we have the following.

\begin{proposition}\label{prop-2co}
    \begin{itemize}
   \item[(i)] If $(h, H)$ is a Poisson $2$-cocycle of the Poisson algebra $(P, ~ \! \cdot ~ \!, \{ ~, ~ \})$ with coefficients in the representation $(V, \mu, \rho)$ then the pair $(h_{(\beta, \alpha)}, H_{(\beta, \alpha)})$ is so.

   \item[(ii)] If $(h, H)$ are $(h', H')$ are cohomologous Poisson $2$-cocycles then $(h_{(\beta, \alpha)}, H_{(\beta, \alpha)})$ and $(h'_{(\beta, \alpha)}, H'_{(\beta, \alpha)})$ are also cohomologous.
    \end{itemize}
\end{proposition}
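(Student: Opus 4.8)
The plan is to prove both statements by a single mechanism: the defining relations of $\mathcal{C}_{\mu,\rho}$ together with the fact that $\alpha$, and hence $\alpha^{-1}$, is a Poisson algebra automorphism let one ``pull $\beta$ through'' every structure map appearing in the cocycle and coboundary identities. Concretely, replacing $x$ by $\alpha^{-1}(x)$ in the relations $\beta(\mu_x v) = \mu_{\alpha(x)}\beta(v)$ and $\beta(\rho_x v) = \rho_{\alpha(x)}\beta(v)$ of (\ref{c-iden}) yields the equivalent forms $\mu_x \beta(v) = \beta(\mu_{\alpha^{-1}(x)} v)$ and $\rho_x \beta(v) = \beta(\rho_{\alpha^{-1}(x)} v)$, which is exactly what is needed to factor $\beta$ out of the left-hand sides. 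In addition, since $\alpha^{-1}$ preserves both the product and the bracket, one has $\alpha^{-1}(x \cdot y) = \alpha^{-1}(x) \cdot \alpha^{-1}(y)$ and $\alpha^{-1}(\{x,y\}) = \{\alpha^{-1}(x), \alpha^{-1}(y)\}$.

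For part (i), I would write $x' = \alpha^{-1}(x)$, $y' = \alpha^{-1}(y)$, $z' = \alpha^{-1}(z)$ and substitute the definitions of $h_{(\beta,\alpha)}$ and $H_{(\beta,\alpha)}$ into each of the three cocycle conditions (\ref{pois-co1}), (\ref{pois-co2}), (\ref{pois-co3}). In every term the outer structure map ($\mu_x$, $\rho_x$, or none) gets absorbed into $\beta$ using the forms above, while the inner arguments $x \cdot y$, $\{x,y\}$ become $x' \cdot y'$, $\{x', y'\}$ after moving $\alpha^{-1}$ inside by its multiplicativity. Because $\beta$ is linear, the entire left-hand side collapses to $\beta$ applied to the corresponding cocycle expression for $(h, H)$ evaluated at the primed arguments; as $(h, H)$ is a cocycle, that expression vanishes, and hence so does its image under $\beta$. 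This establishes that $(h_{(\beta,\alpha)}, H_{(\beta,\alpha)})$ satisfies (\ref{pois-co1})--(\ref{pois-co3}).

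For part (ii), suppose $(h, H)$ and $(h', H')$ are cohomologous via a linear map $\varphi : P \to V$ as in (\ref{cobound}). My candidate for the witnessing homotopy is $\psi := \beta \circ \varphi \circ \alpha^{-1}$. To verify it, I would compute $h_{(\beta,\alpha)}(x,y) - h'_{(\beta,\alpha)}(x,y) = \beta\big(h(x', y') - h'(x', y')\big)$ and expand the difference using the first identity of (\ref{cobound}); pulling $\beta$ through $\mu_{x'}$, $\mu_{y'}$ and through $\varphi(x' \cdot y') = \varphi(\alpha^{-1}(x \cdot y))$ exactly as in part (i) turns the right-hand side into $\mu_x \psi(y) - \psi(x \cdot y) + \mu_y \psi(x)$. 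The same computation with $H$ and $\rho$ in place of $h$ and $\mu$ gives the second coboundary identity, so $(h_{(\beta,\alpha)}, H_{(\beta,\alpha)})$ and $(h'_{(\beta,\alpha)}, H'_{(\beta,\alpha)})$ are cohomologous via $\psi$.

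The computations are all routine intertwining manipulations, so the only real care is bookkeeping. I expect the one mildly delicate step to be the mixed condition (\ref{pois-co3}), which is the single place where $\mu$, $\rho$, $h$ and $H$ all appear simultaneously: there one must apply the correct intertwining relation ($\mu$ or $\rho$) to each term and keep track of which of $h$ or $H$ is being transported. Everything else follows directly from the two observations recorded in the first paragraph.
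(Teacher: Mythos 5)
Your proposal is correct and takes essentially the same route as the paper: part (i) by substituting the arguments $\alpha^{-1}(x), \alpha^{-1}(y), \alpha^{-1}(z)$ into the cocycle identities (\ref{pois-co1})--(\ref{pois-co3}) and pulling $\beta$ through the structure maps via (\ref{c-iden}) and the multiplicativity of $\alpha^{-1}$, and part (ii) via the homotopy $\beta \circ \varphi \circ \alpha^{-1}$, which is exactly the map the paper uses. The only point you leave implicit is the (immediate) symmetry of $h_{(\beta,\alpha)}$ and skew-symmetry of $H_{(\beta,\alpha)}$, which the paper notes in passing.
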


\begin{proof}
    (i) Note that $( h, H)$ is a  Poisson $2$-cocycle implies that $h$ is symmetric, $H$ is skew-symmetric and the identities (\ref{pois-co1})-(\ref{pois-co3}) are hold. It is easy to see that the map $h_{(\beta, \alpha)} $ is symmetric and $H_{(\beta, \alpha)} $ is skew-symmetric. Moreover, in all the identities (\ref{pois-co1})-(\ref{pois-co3}), if we replace the elements $x, y, z$ by the elements $\alpha^{-1} (x), \alpha^{-1} (y), \alpha^{-1} (z)$ respectively, we obtain the corresponding identities for the pair $( h_{(\beta, \alpha)}, H_{(\beta, \alpha)} )$. Hence $( h_{(\beta, \alpha)}, H_{(\beta, \alpha)} )$ is a Poisson $2$-cocycle.

    \medskip

    (ii) Since $(h, H)$ and $(h', H')$ are cohomologous, there exists a linear map $\varphi : P \rightarrow V$ such that the identities in (\ref{cobound}) hold. Hence we have
    \begin{align*}
        h_{(\beta, \alpha)} (x, y) - h'_{(\beta, \alpha)} (x, y) =~& \beta \big( \mu_{\alpha^{-1} (x)} \varphi \alpha^{-1} (y) - \varphi (\alpha^{-1} (x) \cdot \alpha^{-1} (y)) +  \mu_{\alpha^{-1} (y)} \varphi \alpha^{-1} (x)    \big) \\
        =~&  \mu_x \beta \varphi \alpha^{-1} (y) - \beta \varphi \alpha^{-1} (x \cdot y) + \mu_y \beta\varphi \alpha^{-1} (x) \quad (\text{by } (\ref{c-iden}))
    \end{align*}
    and similarly, $H_{(\beta, \alpha)} (x, y) - H'_{(\beta, \alpha)} (x, y) = \rho_x \beta \varphi \alpha^{-1} (y) - \beta \varphi \alpha^{-1} (\{ x, y \} ) - \rho_y \beta \varphi \alpha^{-1} (x)$, for all $x, y \in P$. This shows that $(h_{(\beta, \alpha)}, H_{(\beta, \alpha)})$ and $(h'_{(\beta, \alpha)}, H'_{(\beta, \alpha)})$ are cohomologous Poisson $2$-cocycles.
\end{proof}

It follows from the above proposition that there is a well-defined map $ \Psi: \mathcal{C}_{\mu, \rho} \rightarrow \mathrm{End} (H^2_\mathrm{FGV} (P, V))$ given by
\begin{align*}
   \Psi (\beta, \alpha) [(h, H)] := [ (h_{(\beta, \alpha)}, H_{(\beta, \alpha)}) - (h, H)], \text{ for } (\beta, \alpha) \in \mathcal{C}_{\mu, \rho} \text{ and } [(h, H)] \in H^2_\mathrm{FGV} (P, V).
\end{align*}

Given any section $s$, let the abelian extension (\ref{abelian-aut}) produce the Poisson $2$-cocycle $(h, H)$. Then we define a map $\mathcal{W} : \mathcal{C}_{\mu, \rho} \rightarrow H^2_\mathrm{FGV} (P, V)$, called the {\bf Wells map}, given by
\begin{align}\label{wells-aut-map}
    \mathcal{W} (\beta, \alpha) := \Psi (\beta, \alpha) [(h, H)] = [ (h_{(\beta, \alpha)}, H_{(\beta, \alpha)}) - (h, H)], \text{ for } (\beta, \alpha) \in \mathcal{C}_{\mu, \rho}.
\end{align}
It follows from Proposition \ref{prop-2co} (ii) that the Wells map doesn't depend on the choice of the section $s$. We are now ready to prove the main result of this section.

\begin{theorem}
    Let (\ref{abelian-aut}) be an abelian extension of the Poisson algebra $(P, ~ \! \cdot ~ \!, \{ ~, ~ \})$ by a representation $(V, \mu, \rho)$. Then a pair $(\beta, \alpha ) \in \mathrm{Aut} (V) \times \mathrm{Aut}(P)$ of Poisson algebra automorphisms is inducible if and only if 
    \begin{align*}
        (\beta, \alpha ) \in \mathcal{C}_{\mu, \rho} \quad \text{ and } \quad \mathcal{W} (\beta, \alpha) = 0.
    \end{align*}
\end{theorem}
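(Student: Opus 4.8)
The plan is to prove the two implications separately, leaning on the machinery already in place. For the forward direction, the compatibility $(\beta,\alpha) \in \mathcal{C}_{\mu,\rho}$ is exactly the content of Proposition \ref{prop-ind-first}, so only the vanishing of the Wells class $\mathcal{W}(\beta,\alpha)$ remains to be argued. The key observation is that if $\gamma \in \mathrm{Aut}_V(E)$ induces $(\beta,\alpha)$, then $t := \gamma \circ s \circ \alpha^{-1}$ is again a section of $p$, since $p\,t = (p\gamma s)\circ\alpha^{-1} = \overline{\gamma}\circ\alpha^{-1} = \mathrm{id}_P$. A direct computation using that $\gamma$ is a Poisson homomorphism with $\gamma|_V = \beta$ then shows that the transformed $2$-cocycle $(h_{(\beta,\alpha)}, H_{(\beta,\alpha)})$ is precisely the $2$-cocycle determined by the section $t$. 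Since any two sections produce cohomologous $2$-cocycles, with explicit primitive $\varphi = t - s : P \to V$ (one expands $t(x)\cdot_E t(y)$ and $\{t(x),t(y)\}_E$ using $t = s + \varphi$ and the vanishing of the Poisson structure on $V$), it follows that $(h_{(\beta,\alpha)}, H_{(\beta,\alpha)}) - (h,H)$ is a coboundary, i.e. $\mathcal{W}(\beta,\alpha) = 0$.

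For the converse, I would construct an inducing automorphism by hand. Assume $(\beta,\alpha) \in \mathcal{C}_{\mu,\rho}$ and $\mathcal{W}(\beta,\alpha) = 0$; the latter yields a linear map $\varphi : P \to V$ realizing $(h_{(\beta,\alpha)} - h,\, H_{(\beta,\alpha)} - H)$ as the coboundary \eqref{cobound}. Using the splitting $E = V \oplus s(P)$ afforded by the section, every element of $E$ writes uniquely as $v + s(x)$ with $v \in V$ and $x \in P$, and I define
\begin{align*}
\gamma\big(v + s(x)\big) := \beta(v) + \varphi(\alpha(x)) + s(\alpha(x)).
\end{align*}
By construction $\gamma(V) \subset V$, $\gamma|_V = \beta$, and $\overline{\gamma} = p\,\gamma\,s = \alpha$; moreover $\gamma$ is bijective, being block upper triangular with the bijections $\beta$ and $\alpha$ on the diagonal. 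It then remains to check that $\gamma$ respects both the commutative product $\cdot_E$ and the Lie bracket $\{\,,\,\}_E$.

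The main work, and the main obstacle, is this homomorphism verification, but it is designed to collapse onto the coboundary identities. Expanding $s(x)\cdot_E s(y) = h(x,y) + s(x\cdot y)$ and $v\cdot_E s(y) = \mu_y v$ inside $E$, and applying $\gamma$ to both $(v+s(x))\cdot_E(w+s(y))$ and $\gamma(v+s(x))\cdot_E\gamma(w+s(y))$, the $s(P)$-components agree automatically (both equal $s(\alpha x \cdot \alpha y)$), while the $V$-components agree exactly when
\begin{align*}
h_{(\beta,\alpha)}(\alpha x, \alpha y) - h(\alpha x, \alpha y) = \mu_{\alpha x}\varphi(\alpha y) + \mu_{\alpha y}\varphi(\alpha x) - \varphi(\alpha x \cdot \alpha y),
\end{align*}
which is precisely the first relation in \eqref{cobound} after renaming $\alpha x,\alpha y$; here one uses $\beta(h(x,y)) = h_{(\beta,\alpha)}(\alpha x,\alpha y)$ together with the multiplicative compatibility \eqref{c-iden}. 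The bracket identity is handled identically, its $V$-components reducing to the second relation in \eqref{cobound} via the $\rho$-compatibility in \eqref{c-iden}. Thus $\gamma$ is a Poisson algebra automorphism preserving $V$ with $\tau(\gamma) = (\beta,\alpha)$, so $(\beta,\alpha)$ is inducible. I expect the careful separation of the $V$- and $s(P)$-components to be the only delicate point, the rest being direct substitution.
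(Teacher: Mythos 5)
Your proof is correct, and its overall route is the same as the paper's: Proposition \ref{prop-ind-first} handles membership in $\mathcal{C}_{\mu,\rho}$, the forward direction exhibits an explicit primitive for $(h_{(\beta,\alpha)},H_{(\beta,\alpha)})-(h,H)$, and the converse builds the inducing automorphism $\gamma$ out of $(\beta,\alpha)$, a section $s$, and the primitive $\varphi$. Two refinements of yours are worth recording. In the forward direction the paper expands $h_{(\beta,\alpha)}-h$ by brute force and extracts the primitive $\gamma s\alpha^{-1}-s$ at the end; you instead observe that $t=\gamma s\alpha^{-1}$ is itself a section of $p$ and that $(h_{(\beta,\alpha)},H_{(\beta,\alpha)})$ is exactly the $2$-cocycle determined by $t$ (using $\gamma|_V=\beta$ and multiplicativity of $\gamma$), so the claim reduces to the standard fact that two sections give cohomologous cocycles with primitive $t-s$; this is the same computation, packaged more conceptually. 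In the converse, you set $\gamma(v+s(x))=\beta(v)+\varphi(\alpha(x))+s(\alpha(x))$, whereas the paper sets $\gamma(s(x)+u)=s(\alpha(x))+\beta(u)+\varphi(x)$. Your placement of $\alpha$ inside $\varphi$ is the correct one: with it, the $V$-components of $\gamma(e)\cdot_E\gamma(e')$ and $\gamma(e\cdot_E e')$ match precisely the coboundary identity \eqref{cohomo-1} evaluated at $(\alpha(x),\alpha(y))$, via $h_{(\beta,\alpha)}(\alpha(x),\alpha(y))=\beta(h(x,y))$ and \eqref{c-iden}, and likewise for the bracket. With the paper's $\varphi(x)$ the same comparison leaves the uncancelled terms $\mu_{\alpha(x)}\bigl(\varphi(y)-\varphi(\alpha(y))\bigr)+\mu_{\alpha(y)}\bigl(\varphi(x)-\varphi(\alpha(x))\bigr)+\varphi(\alpha(x\cdot y))-\varphi(x\cdot y)$, so the paper's appeal to \eqref{cohomo-1} closes only after replacing its $\varphi$ by $\varphi\circ\alpha$ --- that is, after passing to your definition. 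Since this amounts to renaming the primitive, the theorem is unaffected, but your construction is the one whose verification goes through exactly as cited.
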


\begin{proof}
    Let $(\beta, \alpha) \in \mathrm{Aut} (V) \times \mathrm{Aut}(P)$ be an inducible pair of Poisson algebra automorphisms. Then we have seen in Proposition \ref{prop-ind-first} that $(\beta, \alpha) \in \mathcal{C}_{\mu, \rho}$. Next, consider a Poisson automorphism $\gamma \in \mathrm{Aut}_V (E)$ such that $\gamma |_V = \beta$ and $p \gamma s = \alpha$, where $s$ is any section. Depending on a fixed section $s$, let the given abelian extension produce the Poisson $2$-cocycle $(h, H)$. Then for any $x, y \in P$, we have
    \begin{align*}
       & h_{(\beta, \alpha)} (x, y) - h (x, y) \\
       &= \beta \big(  h (\alpha^{-1} (x), \alpha^{-1} (y)) \big) - h (x, y) \\
       &= \beta \big( s \alpha^{-1} (x) \cdot_E s \alpha^{-1} (y) - s (\alpha^{-1} (x) \cdot \alpha^{-1} (y) )     \big) - \big( s(x) \cdot_E s (y) - s (x \cdot y)   \big)\\
        &= \gamma s \alpha^{-1} (x) \cdot_E \gamma s \alpha^{-1} (y) - \gamma s (\alpha^{-1} (x) \cdot \alpha^{-1} (y)) - s(x) \cdot_E s(y) + s (x \cdot y)\\
        &=  \gamma s \alpha^{-1} (x) \cdot_E \gamma s \alpha^{-1} (y) - \gamma s (\alpha^{-1} (x) \cdot \alpha^{-1} (y)) - s(x) \cdot_E s(y) + s (x \cdot y) \\
        & \quad - (\gamma s \alpha^{-1} - s) (x) \cdot_E (\gamma s \alpha^{-1} - s) (y) \quad (\because ~\!  \mathrm{Im} (\gamma s \alpha^{-1} - s) \subset V \text{ and } V \text{ has trivial Poisson structure}) \\
        &= s (x) \cdot_E \gamma s \alpha^{-1} (y) - s(x) \cdot_E s (y) - \gamma s \alpha^{-1} (x \cdot y) + s (x \cdot y) + \gamma s \alpha^{-1} (x) \cdot_E s(y) - s (x) \cdot_E s (y) \\
        & \qquad \qquad \qquad \qquad \qquad \qquad  (\text{after rearranging}) \\
        &= \mu_x (\gamma s \alpha^{-1} - s) (y) - (\gamma s \alpha^{-1} - s) (x \cdot y) + \mu_y (\gamma s \alpha^{-1} - s) (x),
    \end{align*}
    and similarly, $H_{(\beta, \alpha)} (x, y) - H (x, y) = \rho_x (\gamma s \alpha^{-1} - s) (y) - (\gamma s \alpha^{-1} - s) (\{ x, y \}) -  \rho_y (\gamma s \alpha^{-1} - s) (x) $. This shows that the Poisson $2$-cocycles $(h_{(\beta, \alpha)} , H_{(\beta, \alpha)})$ and $(h, H)$ are cohomologous. Hence we have $\mathcal{W} (\beta, \alpha) = [ (h_{(\beta, \alpha)} , H_{(\beta, \alpha)}) - (h,H)] = 0.$

    \medskip

    Conversely, suppose that $(\beta, \alpha) \in \mathcal{C}_{\mu, \rho}$ and $\mathcal{W} (\beta, \alpha ) = 0$. Let $s$ be any fixed section. Depending on $s$, let the given abelian extension produce the Poisson $2$-cocycle $(h, H)$. Since $\mathcal{W} (\beta, \alpha) = 0$, it follows that the Poisson $2$-cocycles $(h_{(\beta, \alpha)}, H_{(\beta, \alpha)})$ and $(h, H)$ are cohomologous. Thus, there exists a linear map $\varphi : P \rightarrow V$ such that 
    \begin{align}
        \beta \big( h (\alpha^{-1} (x), \alpha^{-1} (y)) \big) - h (x, y) =~& \mu_x \varphi (y) - \varphi (x \cdot y) + \mu_y \varphi (x), \label{cohomo-1}\\
        \beta \big(  H (\alpha^{-1} (x), \alpha^{-1} (y))  \big)- H (x, y) =~& \rho_x \varphi (y) - \varphi (\{ x, y \}) - \rho_y \varphi (x),
    \end{align}
    for all $x, y \in P$. Since $s$ is a section, any element $e \in E$ can be uniquely written as $e = s(x) + u $, for some $x \in P$ and $u \in V$. We now define a map $\gamma : E \rightarrow E$ by
    \begin{align*}
        \gamma (e) = \gamma (s (x) + u) = s (\alpha (x)) + \beta (u) + \varphi (x), \text{ for } e = s(x) + u \in E.
    \end{align*}
    The map is injective as $\gamma (e) = 0$ (for $e = s(x) + u$) implies that $s (\alpha (x)) = 0$ and $\beta (u) + \varphi (x) = 0$. Since $s, \alpha, \beta$ are all injective maps, we get that $x= 0$ and $u = 0$ which in turn implies that $e = 0$. The map $\gamma$ is also surjective as $e = s(x) + u \in E$ has unique preimage $e' = s (\alpha^{-1} (x)) + ( \beta^{-1} (u) - \beta^{-1} \varphi \alpha^{-1} (x)) \in E$. This shows that $\gamma$ is indeed a bijective map. Next, we claim that $\gamma: E \rightarrow E$ is a Poisson algebra automorphism. For any two elements $e = s(x) + u$ and $e' = s(y) + v$ from the space $E$, we observe that 
    \begin{align*}
        &\gamma (e) \cdot_E \gamma (e') \\
        &= \big(  s (\alpha (x)) + \beta (u) + \varphi (x) \big) \cdot_E \big(  s (\alpha (y)) + \beta (v) + \varphi (y)   \big) \\
       & = s (\alpha (x)) \cdot_E s (\alpha (y)) + s (\alpha (x)) \cdot_E \beta (v) + s (\alpha (x)) \cdot_E \varphi(y) + s (\alpha (y)) \cdot_E \beta (u) + s (\alpha (y)) \cdot_E \varphi (x) \\
       & = s (\alpha (x) \cdot \alpha (y)) + h (\alpha (x), \alpha (y)) + \mu_{\alpha (x)} \beta (v) + \mu_{\alpha (x)} \varphi (y) + \mu_{\alpha (y)} \beta (u) + \mu_{\alpha (y)} \varphi (x) \\
        &= s (\alpha (x) \cdot \alpha (y)) + \beta \big(   h (x, y) + \mu_x v + \mu_y u \big) + \varphi (x \cdot y) \qquad (\text{by } (\ref{cohomo-1}) \text{ and } (\ref{c-iden})) \\
       & =\gamma \big(  s (x \cdot y) + h (x, y) + s(x) \cdot_E v + s(y) \cdot_E u  \big) \qquad (\text{from the definition of } \gamma)\\
       & = \gamma \big(  s(x) \cdot_E s (y) + s (x) \cdot_E v + s(y) \cdot_E u  \big) \\
       & = \gamma (( s(x) + u ) \cdot_E (s(y) + v) ) = \gamma (e \cdot_E e').
    \end{align*}
    In the same way, one can show that $\{ \gamma (e) , \gamma (e') \}_E = \gamma \{ e, e' \}_E.$ Hence the claim follows. Finally, it follows from the definition of $\gamma$ that $\gamma (u) = \beta (u)$ and $\overline{\gamma} (x) = p \gamma s (x) = p ( s (\alpha (x)) + \varphi (x) )= \alpha (x)$, for all $x \in P$ and $u \in V$. Hence $\gamma \in \mathrm{Aut}_V (P)$ and $\tau (\gamma) = (\gamma|_V, \overline{\gamma}) = (\beta, \alpha)$ which shows that the pair $(\beta, \alpha)$ is inducible.
\end{proof}




The Wells map given in (\ref{wells-aut-map}) generalizes the classical Wells map defined in the context of abstract groups \cite{wells} (see also \cite{jin-liu,passi}) and Lie algebras \cite{bar-singh}. In those classical cases, the Wells map fits into a short exact sequence. In the context of Poisson algebras, we have a similar generalization stated below. The proof is also similar to the classical cases.

\begin{theorem}
    Let $0 \rightarrow V \xrightarrow{i} E \xrightarrow{p} P \rightarrow 0$ be an abelian extension of the Poisson algebra $(P, ~ \! \cdot ~ \! , \{ ~, ~ \})$ by a given representation $(V, \mu, \rho)$. Then there is an exact sequence
    \begin{align}\label{wes-aut}
        1  \rightarrow \mathrm{Aut}_{V, P} (E) \xrightarrow{ \iota} \mathrm{Aut}_{V} (E) \xrightarrow{\tau} \mathcal{C}_{\mu, \rho} \xrightarrow{\mathcal{W}} H^2_\mathrm{FGV} (P, V),
    \end{align}
    where $\mathrm{Aut}_{V,P} (E) = \{ \gamma \in \mathrm{Aut}_{V} (E) ~ \! | ~ \!  \tau (\gamma) = (\mathrm{id}_{V}, \mathrm{id}_{P}) \}$.
\end{theorem}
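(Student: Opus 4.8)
The plan is to establish exactness at each of the three internal nodes of the sequence, drawing almost entirely on results already proved in this section. First I would record the standing facts: $\iota$ is the inclusion of a subgroup, and $\tau$ is the group homomorphism constructed above, whose image lands in $\mathcal{C}_{\mu, \rho}$ by Proposition \ref{prop-ind-first} (every element of $\mathrm{im}\,\tau$ is inducible, hence satisfies the identities (\ref{c-iden})); the Wells map $\mathcal{W}$ is well defined on $\mathcal{C}_{\mu, \rho}$ by Proposition \ref{prop-2co}(ii). I would also flag at the outset that $\mathcal{W}$ need not be a group homomorphism, so exactness at $\mathcal{C}_{\mu, \rho}$ is to be read in the sense of pointed sets, with distinguished point $(\mathrm{id}_V, \mathrm{id}_P)$ on the left and $0$ on the right.

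Exactness at $\mathrm{Aut}_{V,P}(E)$ and at $\mathrm{Aut}_V(E)$ is essentially formal. Since $\iota$ is an inclusion it is injective, giving exactness at the first term. For the middle term, the identity pair $(\mathrm{id}_V, \mathrm{id}_P)$ is the identity element of the group $\mathcal{C}_{\mu, \rho}$, so $\ker \tau = \{ \gamma \in \mathrm{Aut}_V(E) ~|~ \tau(\gamma) = (\mathrm{id}_V, \mathrm{id}_P) \}$, which is precisely the defining description of $\mathrm{Aut}_{V,P}(E) = \mathrm{im}\,\iota$. Hence $\ker \tau = \mathrm{im}\,\iota$, as required.

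The substantive node is $\mathcal{C}_{\mu, \rho}$, and here I would simply invoke the main theorem of this section. A pair $(\beta, \alpha) \in \mathcal{C}_{\mu, \rho}$ lies in $\mathrm{im}\,\tau$ exactly when it is inducible, and by that theorem $(\beta, \alpha)$ is inducible if and only if $\mathcal{W}(\beta, \alpha) = 0$. Therefore $\mathrm{im}\,\tau = \mathcal{W}^{-1}(0)$, which is the desired exactness at $\mathcal{C}_{\mu, \rho}$. One should additionally check the compatibility $\mathcal{W}(\mathrm{id}_V, \mathrm{id}_P) = 0$, but this is automatic from the definitions: $h_{(\mathrm{id}, \mathrm{id})} = h$ and $H_{(\mathrm{id}, \mathrm{id})} = H$, so the relevant class vanishes.

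I do not anticipate a genuine obstacle, since all three exactness statements reduce either to the definitions of the groups and maps involved or to the inducibility theorem already established. The only point requiring care is conceptual rather than computational, namely being explicit that exactness at the terminal node is exactness of pointed sets rather than of groups, because $\mathcal{W}$ carries no multiplicative structure; the remark at the end of the section that the proof is \emph{similar to the classical cases} refers exactly to this formal bookkeeping.
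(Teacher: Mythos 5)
Your proposal is correct and is exactly the argument the paper intends: the paper omits the proof, remarking only that it is ``similar to the classical cases,'' and that classical argument is precisely what you give --- injectivity of the inclusion, $\ker\tau = \mathrm{Aut}_{V,P}(E) = \mathrm{im}\,\iota$ by definition, and $\mathrm{im}\,\tau = \mathcal{W}^{-1}(0)$ by the inducibility theorem of the section. Your explicit remarks that exactness at $\mathcal{C}_{\mu,\rho}$ is exactness of pointed sets (since $\mathcal{W}$ is not a homomorphism) and that $\mathcal{W}(\mathrm{id}_V,\mathrm{id}_P)=0$ are correct and are the only points of care in the otherwise formal bookkeeping.
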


\medskip

An abelian extension (\ref{abelian-aut}) is said to be {\bf split} if there exists a section $s : P \rightarrow E$ which is also a homomorphism of Poisson algebras. In this case, we can identify $E \cong P \oplus V$ as Poisson algebras, where the Poisson algebra structure on $P \oplus V$ is given by
\begin{align*}
    (x, u) \cdot_\ltimes (y, v) = (x \cdot y ~ \! , ~ \! \mu_x v + \mu_y u) ~~~~ \text{ and } ~~~~ \{ (x, u) , (y, v) \}_\ltimes = (\{ x, y \} ~ \!, ~ \! \rho_x v - \rho_y u),
\end{align*}
 for $(x, u), (y, v) \in P \oplus V.$ With this identification, the maps $i, p$ and $s$ are the obvious ones. For the above section $s$, if the abelian extension produce the Poisson $2$-cocycle $(h, H)$, then
\begin{align*}
    h (x, y) = s(x) \cdot_E s(y) - s (x \cdot y) = 0 \quad \text{ and } \quad H (x, y) = \{ s(x), s(y) \}_E - s \{ x, y \} = 0.
\end{align*}
Hence $(h, H) = 0$ which in turn implies that the Wells map $\mathcal{W}$ vanishes identically. Thus the Wells exact sequence (\ref{wes-aut}) takes the form
\begin{align}\label{wes-new-aut}
    1 \rightarrow \mathrm{Aut}_{V, P} (E) \rightarrow \mathrm{Aut}_V (E) \xrightarrow{\tau} \mathcal{C}_{\mu, \rho} \rightarrow 1.
\end{align}
Let $(\beta, \alpha) \in \mathcal{C}_{\mu, \rho}$. Then we define a map $\gamma_{(\beta, \alpha)} : E \rightarrow E$ by $\gamma_{(\beta, \alpha)} (x, u) = (\alpha (x), \beta (u))$, for $(x, u) \in E$. It is easy to see that $\gamma_{(\beta, \alpha)} \in \mathrm{Aut}_V (E)$. Hence there is a map $t : \mathcal{C}_{\mu, \rho} \rightarrow \mathrm{Aut}_V (E)$ given by $t (\beta, \alpha) := \gamma_{(\beta, \alpha)}$, for $(\beta, \alpha) \in \mathcal{C}_{\mu, \rho}$. The map $t$ is a group homomorphism satisfying additionally $\tau t = \mathrm{Id}_{\mathcal{C}_{\mu, \rho}}$. Thus, (\ref{wes-new-aut}) is a split short exact sequence of groups that yields the following isomorphism of groups
\begin{align*}
    \mathrm{Aut}_V (E) \cong \mathcal{C}_{\mu, \rho} \ltimes \mathrm{Aut}_{V, P} (E).
\end{align*}

\medskip

\section{Inducibility of Poisson derivations}\label{sec4}

In this section, we study the inducibility of a pair of Poisson derivations in a given abelian extension of a Poisson algebra by a representation. We observe that the image of a suitable Wells-type map can describe the corresponding obstruction for this inducibility problem. In the end, we also obtain a short exact sequence connecting some derivation spaces and the second Poisson cohomology group.

 As before, let (\ref{abelian-aut}) be a fixed abelian extension of a Poisson algebra $(P, ~ \! \cdot ~ \!, \{ ~, ~ \})$ by a representation $(V, \mu, \rho)$. We also let
\begin{align*}
    \mathrm{Der}_{V} (E) = \{ d : E \rightarrow E \text{ is a Poisson derivation } | ~ \! d (V) \subset V \} 
\end{align*}
be the set of all Poisson derivations on $E$ that make the subspace $V$ invariant. For any $d \in \mathrm{Der}_V (E)$, it follows that $d |_V \in \mathrm{Der} (V)$. Moreover, for any linear section $s : P \rightarrow E$, we define a map $\overline{d} : P \rightarrow P$ by $\overline{d} (x) := pd s (x)$, for $x \in P$. The map $\overline{d}$ doesn't depend on the choice of $s$. Further, for any $x, y \in P$, 
\begin{align*}
    \overline{d} (x \cdot y) =~& pd \big(  s(x) \cdot_E s(y) - h (x, y)  \big) \\
    =~& pd ( s(x) \cdot_E s (y)) \quad (\because ~ d (V) \subset V \text{ and } p |_{V} = 0) \\
    =~& p \big( ds (x) \cdot_E s(y) + s(x) \cdot_E ds (y)    \big) = \overline{d} (x) \cdot y + x \cdot \overline{d} (y) \quad (\because ~ ps = \mathrm{id}_P).
\end{align*}
Similarly, one can show that $\overline{d} (\{ x, y \}) = \{ \overline{d} (x), y \} + \{ x, \overline{d} (y) \}$, for all $x, y \in P$. This shows that the map $\overline{d}$ is a Poisson derivation on the Poisson algebra $P$. Hence, there is a well-defined map 
\begin{align*}
\eta : \mathrm{Der}_{V} (E) \rightarrow \mathrm{Der} (V) \times \mathrm{Der} (P) ~~ \text{ given by } ~~ \eta (d) = (d |_{V}, \overline{d}).
\end{align*}
A pair of Poisson derivations $(d_V, d_P) \in \mathrm{Der} (V) \times \mathrm{Der} (P)$ is said to be {\bf inducible} if there exists a Poisson derivation $d \in \mathrm{Der}_V (E)$ such that $\eta (d) = (d_V, d_P)$. Our aim in this section is to find a necessary and sufficient condition under which a pair of Poisson derivations is inducible. For this, we will present a Wells-type map in the present context and show that a pair of Poisson derivations is inducible if and only if its image under this new Wells-type map vanishes identically. We start with the following result.

\begin{proposition}\label{some-prop}
Let (\ref{abelian-aut}) be an abelian extension of the Poisson algebra $(P, ~ \! \cdot ~ \!, \{ ~, ~ \})$ by a representation $(V, \mu, \rho)$. If the pair of Poisson derivations $(d_V, d_P) \in \mathrm{Der} (V) \times \mathrm{Der} (P)$ is inducible then 
    \begin{align}\label{g1}
        d_V (\mu_x v) = \mu_{d_P (x)} v + \mu_x d_V (v)  \quad \text{ and } \quad d_V (\rho_x v) = \rho_{d_P (x)} v + \rho_x d_V (v), \text{ for any } x \in P, ~ \! v \in V.
    \end{align}
\end{proposition}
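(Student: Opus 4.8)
The plan is to mirror the argument of Proposition~\ref{prop-ind-first}, with the multiplicativity of the automorphism replaced by the Leibniz rule of the derivation. Since $(d_V, d_P)$ is inducible, I would fix a Poisson derivation $d \in \mathrm{Der}_V(E)$ and a section $s$ of $p$ such that $d|_V = d_V$ and $\overline{d} = pds = d_P$. The argument rests on two structural features of the abelian extension (\ref{abelian-aut}): first, the prescribed representation is realised inside $E$ via the section, so that $\mu_x v = s(x) \cdot_E v$ and $\rho_x v = \{ s(x), v \}_E$ for all $x \in P$ and $v \in V$; second, $V$ carries the trivial Poisson structure in $E$, whence $w \cdot_E v = 0$ and $\{ w, v \}_E = 0$ for all $w, v \in V$.

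First I would translate the condition $\overline{d} = d_P$ into a statement about the section: since $pds(x) = d_P(x)$, the element $ds(x) - s(d_P(x))$ lies in $\mathrm{ker}(p) = \mathrm{im}(i) \cong V$, so I may write $ds(x) = s(d_P(x)) + w_x$ with $w_x \in V$ for each $x \in P$. To establish the first identity in (\ref{g1}), I would use that $\mu_x v = s(x) \cdot_E v \in V$, hence $d_V(\mu_x v) = d(s(x) \cdot_E v)$, and expand by the Leibniz rule for the commutative product:
\begin{align*}
d_V(\mu_x v) = d(s(x)) \cdot_E v + s(x) \cdot_E d(v) = \big( s(d_P(x)) + w_x \big) \cdot_E v + s(x) \cdot_E d_V(v).
\end{align*}
Here $w_x \cdot_E v = 0$ by triviality of the Poisson structure on $V$, while the surviving terms are exactly $\mu_{d_P(x)} v$ and $\mu_x d_V(v)$, yielding the first equation. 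The second identity follows by the same computation with the product $\cdot_E$ replaced by the bracket $\{ ~, ~ \}_E$ and the Leibniz rule for the Lie bracket, using $\{ w_x, v \}_E = 0$.

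The computation is short and essentially forced, so I do not expect a genuine obstacle; the only point requiring care is the bookkeeping that isolates the relevant terms from those that vanish. Concretely, the step to verify cleanly is that the correction term $w_x \in V$ --- arising because $d$ need not commute with the section $s$ --- contributes nothing to either identity, precisely as the analogous correction vanished in the automorphism case of Proposition~\ref{prop-ind-first}.
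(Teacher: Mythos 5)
Your proof is correct and follows essentially the same route as the paper: both invoke the induced derivation $d$ with $d|_V = d_V$, $pds = d_P$, expand $d(s(x)\cdot_E v)$ and $d\{s(x),v\}_E$ by the Leibniz rule, and absorb the discrepancy $(ds - sd_P)(x) \in \ker(p) \cong V$ using the triviality of the Poisson structure on $V$. Your version merely makes the correction term $w_x$ explicit, which the paper handles in a single parenthetical remark.
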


\begin{proof}
    Since $(d_V, d_P)$ is an inducible pair, there exists a Poisson derivation $d \in \mathrm{Der}_V (E)$ such that $d \big|_V = d_V$ and $pds = d_P$ (here $s$ is any arbitrary section). Hence for any $x \in P$ and $v \in V$,
    \begin{align*}
        &d_V (\mu_x v)    && d_V (\rho_x v ) \\
        &= d (s(x) \cdot_E v)   &&= d \{ s(x) , v \}_E  \\
        &= ds (x) \cdot_E v + s (x) \cdot_E d(v) && = \{ ds(x), v \}_E + \{ s(x), d(v) \}_E \\
        &=sd_P (x) \cdot_E v + s(x) \cdot_E d_V (v) && = \{ s d_P (x), v \}_E + \{ s(x), d_V (v) \}_E \\
        &= \mu_{d_P (x)} v + \mu_x d_V (v), && = \rho_{d_P (x)} v + \rho_x d_V (v).
    \end{align*}
In the third equality of both the above calculations, we have used that $(ds - sd_P) (x) \in \mathrm{ker} (p) = \mathrm{im} (i) \cong V.$ This proves the result.
\end{proof}

Given an abelian extension as above, we define
\begin{align*}
    \mathcal{D}_{\mu, \rho} := \big\{  (d_V, d_P ) \in  \mathrm{Der} (V) \times \mathrm{Der} (P) ~ \! | ~ \! \text{the identities in } (\ref{g1}) \text{ hold} \big\}.
\end{align*}
It is easy to verify that $\mathcal{D}_{\mu, \rho}$ has a Lie algebra structure with the componentwise commutator bracket. Let $(d_V, d_P) \in \mathcal{D}_{\mu, \rho}$ be any arbitrary element. For any linear maps $h : P^{\otimes 2} \rightarrow V$ and $H : \wedge^2 P \rightarrow V$, we define new maps $h_{(d_V, d_P ) } : P^{\otimes 2} \rightarrow V$ and $H_{ (d_V, d_P )} : \wedge^2 P \rightarrow V$ respectively by
\begin{align*}
    h_{(d_V, d_P )} (x, y) :=~& d_V ( h (x, y)) - h (d_P (x), y) - h (x, d_P (y)),\\
    H_{(d_V, d_P )} (x, y) :=~& d_V ( H (x, y)) - H (d_P (x), y) - H (x, d_P (y)),
\end{align*}
for $x, y \in P$. With these notations, we have the following.

\begin{proposition}\label{prop-two}
\begin{itemize}
    \item[(i)]  If $(h, H)$ is a Poisson $2$-cocycle of the Poisson algebra $(P, ~ \! \cdot ~ \!, \{ ~ , ~ \})$ with coefficients in the representation $(V, \mu, \rho)$ then $ ( h_{(d_V, d_P )}, H_{(d_V, d_P )} )$ is so.
    \item[(ii)] If $(h, H)$ and $(h', H')$ are two cohomologous Poisson $2$-cocycles then $ ( h_{(d_V, d_P )}, H_{(d_V, d_P )} )$ and $ ( h'_{(d_V, d_P )}, H'_{(d_V, d_P )} )$ are also cohomologous.
\end{itemize}
\end{proposition}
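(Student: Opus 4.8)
The plan is to recognize the assignment $(h,H)\mapsto(h_{(d_V,d_P)},H_{(d_V,d_P)})$ as the infinitesimal counterpart of the automorphism action treated in Proposition \ref{prop-2co}: it is the natural ``Lie derivative'' operator $\mathcal{L}_{(d_V,d_P)}$ of the derivation pair $(d_V,d_P)$ acting on $2$-cochains. The structural reason both parts hold is that $\mathcal{L}_{(d_V,d_P)}$ is a degree-zero chain map, i.e.\ it commutes with $\delta_\mathrm{FGV}$. This commutation is forced precisely by the hypotheses packaged in $\mathcal{D}_{\mu,\rho}$, namely that $(d_V,d_P)$ satisfies the compatibility identities (\ref{g1}) with the representation while $d_P$ is a derivation for both $\cdot$ and $\{~,~\}$. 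Granting the chain-map property, $\mathcal{L}_{(d_V,d_P)}$ carries $2$-cocycles to $2$-cocycles, which is (i), and $2$-coboundaries to $2$-coboundaries, which is (ii). I would nonetheless present the direct verification, following the template of Proposition \ref{prop-2co}.

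For (i), the symmetry of $h_{(d_V,d_P)}$ and the skew-symmetry of $H_{(d_V,d_P)}$ are immediate from the linearity of $d_V,d_P$ together with the corresponding symmetry of $h$ and $H$. It then remains to check the three cocycle conditions (\ref{pois-co1})--(\ref{pois-co3}) for the transformed pair. Taking (\ref{pois-co1}) as the model case, I would substitute the definition of $h_{(d_V,d_P)}$, commute each $d_V$ past the module action using the rearrangement $\mu_x d_V(v)=d_V(\mu_x v)-\mu_{d_P(x)}v$ of (\ref{g1}), and expand $d_P(x\cdot y)=d_P(x)\cdot y+x\cdot d_P(y)$. After collecting terms, the left-hand side reorganizes as $d_V$ applied to the cocycle expression (\ref{pois-co1}) for $(h,H)$, plus exactly three further copies of (\ref{pois-co1}) in which a single one of $x,y,z$ has been replaced by its $d_P$-image. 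All four expressions vanish because $(h,H)$ is a cocycle. The conditions (\ref{pois-co2}) and (\ref{pois-co3}) follow the same pattern, now commuting $d_V$ past $\rho_x$ via the second identity in (\ref{g1}) and distributing $d_P$ across each Lie bracket.

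For (ii), let $\varphi:P\to V$ be a primitive realizing the cohomology (\ref{cobound}) between $(h,H)$ and $(h',H')$. The natural candidate for a primitive relating the two transformed pairs is $\psi:=\varphi_{(d_V,d_P)}$, defined on the degree-one cochain $\varphi$ by $\psi(x):=d_V(\varphi(x))-\varphi(d_P(x))$. Since the transformation is linear, $h_{(d_V,d_P)}-h'_{(d_V,d_P)}=(h-h')_{(d_V,d_P)}$, so it suffices to apply $\mathcal{L}_{(d_V,d_P)}$ to the coboundary $(h-h')(x,y)=\mu_x\varphi(y)-\varphi(x\cdot y)+\mu_y\varphi(x)$. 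Commuting $d_V$ past the module actions via (\ref{g1}) and using that $d_P$ is a derivation for $\cdot$, all correction terms of the form $\mu_{d_P(x)}\varphi(y)$ and $\mu_{d_P(y)}\varphi(x)$ cancel, and what remains is precisely $\mu_x\psi(y)-\psi(x\cdot y)+\mu_y\psi(x)$, the $h$-part of the coboundary of $\psi$. The $H$-part is obtained identically, replacing $\mu$ by $\rho$ with the signs of (\ref{cobound}), using the $\rho$-half of (\ref{g1}) and that $d_P$ is a derivation for $\{~,~\}$; this yields $\rho_x\psi(y)-\psi(\{x,y\})-\rho_y\psi(x)$. Hence $\psi$ witnesses that the transformed cocycles are cohomologous.

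The main obstacle is organizational rather than conceptual. The verification of the Chevalley-Eilenberg type condition (\ref{pois-co2}) and of the mixed condition (\ref{pois-co3}) produces the largest collection of correction terms, since $d_V$ must be moved past each $\rho_{x_i}$ and $d_P$ distributed across each occurrence of a bracket or a product. The key is to arrange the bookkeeping so that these terms group uniformly as ``$d_V$ of the original identity'' together with ``the original identity with one entry differentiated by $d_P$''; once this grouping is in place, every block vanishes by the cocycle hypothesis in (i) or the coboundary hypothesis in (ii), and the computation closes.
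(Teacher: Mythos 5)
Your proposal is correct and takes essentially the same route as the paper: a direct verification of the three identities (\ref{pois-co1})--(\ref{pois-co3}) using (\ref{g1}) and the derivation property of $d_P$, with the terms grouping exactly as you describe ($d_V$ of the original identity plus copies with one argument replaced by its $d_P$-image), and for (ii) the identical primitive $d_V \circ \varphi - \varphi \circ d_P$. The Lie-derivative/chain-map framing is a nice conceptual gloss, but the substance of your verification coincides with the paper's proof.
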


\begin{proof}
  (i) Note that $(h, H)$ is a Poisson $2$-cocycle implies that the identities in (\ref{pois-co1}), (\ref{pois-co2}) and (\ref{pois-co3}) hold. First, we observe that the map $h_{(d_V, d_P )}$ is commutative and for any $x, y, z \in P$,
  \begin{align*}
      &\mu_x h_{(d_V, d_P )} (y, z) - h_{(d_V, d_P )} (x \cdot y, z) + h_{(d_V, d_P )} (x, y \cdot z) - \mu_z h_{(d_V, d_P )} (x, y) \\
      &= \mu_x \big(   d_V (h (y, z)) - h  (d_P (y), z) - h (y, d_P (z))   \big) - d_V (h (x \cdot y, z)) + h (d_P (x \cdot y), z ) + h (x \cdot y, d_P (z)) \\
     & ~~~ + d_V (h (x , y \cdot z)) - h (d_P (x), y \cdot z) - h (x, d_P (y \cdot z)) - \mu_z \big(  d_V (h (x, y)) - h (d_P (x), y) - h (x, d_P (y)) \big) \\
     & = d_V \mu_x h (y, z) - \mu_{d_P (x)} h (y, z) - \mu_x h  (d_P (y), z) - \mu_x h (y, d_P (z)) - d_V h (x \cdot y, z) \\
     & ~~~ + h (d_P (x) \cdot y , z) + h (x \cdot d_P (y), z) + h (x \cdot y, d_P (z)) + d_V h (x, y \cdot z) - h (d_P (x) , y \cdot z) \\
      & ~~~ - h (x, d_P (y) \cdot z) - h (x, y \cdot d_P (z)) - d_V \mu_z h (x, y) + \mu_{d_P (z)} h (x, y) + \mu_z h (d_P (x), y) + \mu_z h (x, d_P (y)) \\
      &= 0 \qquad (\text{as } h \text{ satisfies } (\ref{pois-co1}))
  \end{align*}
  On the other hand, the map $H_{(d_V, d_P)}$ is skew-symmetric and also we have
  \begin{align*}
      &\rho_x H_{(d_V, d_P)} (y, z) + c.p. + H_{(d_V, d_P)} (x, \{ y, z \} ) + c.p. \\
     &= \rho_x \big\{ d_V (H (y, z)) - H (d_P(y), z) - H (y, d_P (z)) \big\} + c. p. \\
& \quad \qquad + \big\{  d_V (H (x, \{ y, z \} )) - H (d_P  (x), \{ y, z \} ) - H (x, d_P \{ y, z \})  \big\} + c.p. \\
&= \big\{  d_V \rho_x H (y, z) - \rho_{d_P  (x)} H (y, z) - \rho_x H (d_P (y), z) - \rho_x H (y, d_P (z))  \big\} + c.p. \\
& \quad \qquad + \big\{ d_V (H (x, \{ y, z \} )) - H (d_P (x), \{ y, z \} ) - H (x, \{ d_P (y), z\} ) - H (x, \{ y, d_P (z)\} )   \big\} + c.p. \\
&= 0 \qquad (\text{as } H \text{ satisfies } (\ref{pois-co2})).
\end{align*}
Here $c.p.$ stands for the cyclic permutations of the inputs $x, y, z$. Finally, we also have
\begin{align*}
   &\rho_x h_{(d_V, d_P)} (y, z) - h_{(d_V, d_P)} (\{ x, y\}, z) - h_{(d_V, d_P)} (y, \{ x, z \}) + H_{(d_V, d_P)} (x, y \cdot z) \\
  & \qquad \qquad  - \mu_y H_{(d_V, d_P)} (x, z)  - \mu_z H_{(d_V, d_P)} (x, y) \\
   &= \rho_x \big(  d_V h (y, z) - h (d_P (y), z) - h (y, d_P (z)) \big) - d_V h (\{x, y \}, z) + h (d_P \{ x, y \}, z) + h (\{x, y \}, d_P (z)) \\
   & ~~~ - d_V h (y, \{x, z\}) + h (d_P (y), \{ x, z \}) + h (y, d_P \{ x, z\}) + d_V H (x, y \cdot z) - H (d_P (x), y \cdot z) - H (x, d_P (y \cdot z)) \\
   & ~~~ - \mu_y \big( d_V H (x, z) - H (d_P (x), z) - H (x, d_P (z))   \big) - \mu_z \big( d_V H (x, y) - H (d_P (x), y) - H (x, d_P (y))   \big) \\
  & = 0 \qquad (\text{by using } (\ref{g1}), \text{ the derivation property of } d_P \text{ and the identity } (\ref{pois-co3})).
\end{align*}
This shows that the pair $(h_{(d_V, d_P)}, H_{(d_V, d_P)})$ satisfy all the identities (\ref{pois-co1}), (\ref{pois-co2}) and (\ref{pois-co3}). Hence it is a Poisson $2$-cocycle.

\medskip

  (ii) Since $(h, H)$ and $(h', H')$ are cohomologous Poisson $2$-cocycles, there exists a linear map $\varphi : P \rightarrow V$ such that the identities in (\ref{cobound}) hold. Hence we have
  \begin{align*}
      &h_{(d_V, d_P)} (x, y) - h'_{(d_V, d_P)} (x, y) \\
      &= d_V (h (x, y)) - h (d_P (x), y) - h (x, d_P(y)) - d_V (h' (x, y)) + h' (d_P (x), y) + h' (x, d_P (y)) \\
      &= d_V \big(  \mu_x \varphi (y) - \varphi (x \cdot y) + \mu_y \varphi (x)  \big) - \big(   \mu_{d_P (x)} \varphi (y) - \varphi ( d_P (x) \cdot y) + \mu_y \varphi (d_P (x))  \big) \\
      & \qquad \qquad - \big( \mu_x \varphi (d_P(y)) - \varphi (x \cdot d_P (y) ) + \mu_{d_P (y)} \varphi (x) \big) \qquad (\text{by } (\ref{cobound}))\\
      &= \mu_x (d_V \circ \varphi - \varphi \circ d_P ) (y) -  (d_V \circ \varphi - \varphi \circ d_P ) (x \cdot y) + \mu_y  (d_V \circ \varphi - \varphi \circ d_P ) (x) \qquad  (\text{by } (\ref{g1}))
  \end{align*}
  and similarly, 
  \begin{align*}
       &H_{(d_V, d_P)} (x, y) - H'_{(d_V, d_P)} (x, y) \\
       & \qquad \qquad = \rho_x (d_V \circ \varphi - \varphi \circ d_P ) (y) - (d_V \circ \varphi - \varphi \circ d_P ) (\{ x, y \}) - \rho_y (d_V \circ \varphi - \varphi \circ d_P ) (x).
  \end{align*}
  This shows that the Poisson $2$-cocycles $ ( h_{(d_V, d_P )}, H_{(d_V, d_P )} )$ and $ ( h'_{(d_V, d_P )}, H'_{(d_V, d_P )} )$ are cohomologous by the map $d_V \circ \varphi - \varphi \circ d_P$. This completes the proof.
\end{proof}

The above proposition shows that there is a well-defined map
\begin{align*}
    \Theta : \mathcal{D}_{\mu, \rho} \rightarrow \mathrm{End} (H^2_\mathrm{FGV} (P, V)) ~~ \text{ given by  } ~~ \Theta (d_V, d_P) [(h, H)] := [ ( h_{(d_V, d_P )}, H_{(d_V, d_P )} ) ],
\end{align*}
for any arbitrary $(d_V, d_P) \in \mathcal{D}_{\mu, \rho}$ and $ [(h, H)] \in H^2_\mathrm{FGV} (P, V)$. For a fixed section $s$, let the given abelian extension (\ref{abelian-aut}) produce the Poisson $2$-cocycle $(h, H)$. Then we define a map (also denoted by the same notation as before) $ \mathcal{W} : \mathcal{D}_{\mu, \rho} \rightarrow H^2_\mathrm{FGV} (P, V)$ by
\begin{align}
\mathcal{W} (d_V, d_P):= \Theta (d_V, d_P) [(h, H)] = [ ( h_{(d_V, d_P )}, H_{(d_V, d_P )} ) ], \text{ for } (d_V, d_P) \in \mathcal{D}_{\mu, \rho}.
\end{align}
The map $\mathcal{W}$ defined above is called the {\bf Wells map} in the context of Poisson derivations. It follows from Proposition \ref{prop-two} (ii) that the above Wells map is independent of the choice of the section $s$. We are now ready to prove the main result of this section.

\begin{theorem}\label{thm-ind-der}
    Let (\ref{abelian-aut}) be an abelian extension of the Poisson algebra $(P, ~ \! \cdot ~ \! , \{ ~, ~ \})$ by a representation $(V, \mu, \rho)$. Then a pair $(d_V, d_P) \in \mathrm{Der} (V) \times \mathrm{Der} (P)$ of Poisson derivations is inducible if and only if 
    \begin{align*}
        (d_V, d_P) \in \mathcal{D}_{\mu, \rho} \quad \text{ and } \quad \mathcal{W} (d_V, d_P) = 0.
    \end{align*}
\end{theorem}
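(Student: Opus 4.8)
The plan is to mirror the proof of the corresponding theorem for Poisson automorphisms, replacing the multiplicative relations there by their infinitesimal (Leibniz-type) analogues, and to argue in two directions.

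First I would treat the forward implication. Assume $(d_V, d_P)$ is inducible, so that $\eta(d) = (d_V, d_P)$ for some $d \in \mathrm{Der}_V(E)$. Membership $(d_V, d_P) \in \mathcal{D}_{\mu, \rho}$ is already furnished by Proposition \ref{some-prop}, so it remains only to show $\mathcal{W}(d_V, d_P) = 0$, i.e. that $(h_{(d_V, d_P)}, H_{(d_V, d_P)})$ is a coboundary. The natural candidate for the trivializing $1$-cochain is $\psi := ds - s d_P$; since $p(ds - sd_P) = d_P - d_P = 0$, its image lies in $\mathrm{ker}(p) = \mathrm{im}(i) \cong V$, so $\psi$ may be viewed as a map $P \to V$. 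Expanding $h_{(d_V, d_P)}(x, y) = d_V(h(x,y)) - h(d_P(x), y) - h(x, d_P(y))$ by means of $h(x,y) = s(x) \cdot_E s(y) - s(x \cdot y)$, the Leibniz rule for $d$, the identity $s(d_P(x)\cdot y) + s(x \cdot d_P(y)) = s d_P(x \cdot y)$, and the module action $v \cdot_E s(y) = \mu_y v$ for $v \in V$, I expect the whole expression to collapse to $\mu_x \psi(y) - \psi(x \cdot y) + \mu_y \psi(x)$, with the entirely parallel computation producing $\rho_x \psi(y) - \psi(\{x,y\}) - \rho_y \psi(x)$ for $H_{(d_V, d_P)}$. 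These are exactly the coboundary relations (\ref{cobound}) for $\psi$, whence $\mathcal{W}(d_V, d_P) = 0$.

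For the converse, assume $(d_V, d_P) \in \mathcal{D}_{\mu, \rho}$ and $\mathcal{W}(d_V, d_P) = 0$. Then $(h_{(d_V, d_P)}, H_{(d_V, d_P)})$ is a coboundary, so there is a linear map $\varphi : P \to V$ with $h_{(d_V, d_P)}(x,y) = \mu_x \varphi(y) - \varphi(x \cdot y) + \mu_y \varphi(x)$ and the analogous identity for $H_{(d_V, d_P)}$. Using the decomposition $E = s(P) \oplus V$, I would define $d : E \to E$ by $d(s(x) + u) := s(d_P(x)) + d_V(u) + \varphi(x)$, the infinitesimal analogue of the automorphism constructed in the previous theorem. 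This $d$ is linear, satisfies $d(V) \subset V$ with $d\big|_V = d_V$, and $\overline{d}(x) = p\,d\,s(x) = d_P(x)$ because $\varphi(x) \in \mathrm{ker}(p)$; hence $\eta(d) = (d_V, d_P)$ as soon as $d$ is shown to be a Poisson derivation.

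The principal obstacle is precisely this last point: verifying the two Leibniz identities $d(e \cdot_E e') = d(e) \cdot_E e' + e \cdot_E d(e')$ and $d\{e, e'\}_E = \{d(e), e'\}_E + \{e, d(e')\}_E$ for arbitrary $e = s(x) + u$ and $e' = s(y) + v$. Expanding both sides with $s(x) \cdot_E s(y) = s(x \cdot y) + h(x,y)$ and $s(x) \cdot_E v = \mu_x v$, discarding every term quadratic in $V$ (since $V$ carries the trivial Poisson structure), and invoking the identities (\ref{g1}) to commute $d_V$ past $\mu$ and $\rho$, I expect the difference of the two sides of the first identity to reduce to $h_{(d_V, d_P)}(x,y) - \big(\mu_x \varphi(y) - \varphi(x \cdot y) + \mu_y \varphi(x)\big)$, and similarly for the bracket. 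As this vanishes by the coboundary relations supplied by $\mathcal{W}(d_V, d_P) = 0$, both Leibniz identities hold and $d$ is the desired Poisson derivation. The delicate part throughout is the bookkeeping of cross-terms --- keeping track of which products land in $V$ (and therefore act through $\mu$ and $\rho$ rather than through $\cdot_E$ and $\{\,,\}_E$) and applying (\ref{g1}) at exactly the right places so that the leftover terms reassemble into the cocycles $h_{(d_V, d_P)}$ and $H_{(d_V, d_P)}$.
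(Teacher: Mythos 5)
Your proposal is correct and follows essentially the same route as the paper's proof: the forward direction trivializes $(h_{(d_V,d_P)},H_{(d_V,d_P)})$ via the $1$-cochain $ds-sd_P$ exactly as the paper does, and the converse constructs $d(s(x)+u)=s(d_P(x))+d_V(u)+\varphi(x)$ and verifies the Leibniz identities using the coboundary relations together with (\ref{g1}), which is precisely the paper's argument.
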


\begin{proof}
    Let $(d_V, d_P) \in \mathrm{Der} (V) \times \mathrm{Der} (P)$ be an inducible pair of Poisson derivations. Then there exists a Poisson derivation $d \in \mathrm{Der}_V (E)$ such that $d |_V = d_V$ and $pds = d_P$, where $s$ is any section. Further, we have seen in Proposition \ref{some-prop} that $(d_V, d_P) \in \mathcal{D}_{\mu, \rho}$. Depending on a fixed section $s$, let the given abelian extension produce the Poisson $2$-cocycle $(h, H)$. Then we have
    \begin{align*}
        &h_{(d_V, d_P)} (x, y) \\
        &= d_V (h (x, y)) - h (d_P (x), y) - h (x, d_P (y)) \\
        &= d_V \big( s(x) \cdot_E s(y) - s (x \cdot y) \big) - sd_P (x) \cdot_E s (y) + s (d_P (x) \cdot y) - s(x) \cdot_E s d_P (y) + s (x \cdot d_P (y)) \\
        &= s (x) \cdot_E ds (y) + ds (x) \cdot_E s (y) - ds (x \cdot y) - s d_P (x) \cdot_E s (y) - s(x) \cdot_E s d_P (y) + sd_P (x \cdot y) \\
        & \qquad \qquad \qquad (\because ~ d_V = d|_V \text{ and } d, d_P \text{ are derivations}) \\
        &= \mu_x (ds - sd_ P)(y) - (ds - sd_P) (x \cdot y) + \mu_y (ds - sd_P) (x)
    \end{align*}
    and similarly, $ H_{(d_V, d_P)} (x, y) = \rho_x (ds - s d_P) (y) - (ds - s d_P) (\{ x, y \} ) - \rho_y (ds - s d_P) (x)$, for all $x, y \in P$. This shows that the Poisson $2$-cocycle $(h_{(d_V, d_P)}, H_{(d_V, d_P)})$ is cohomologous to the null Poisson $2$-cocycle $(0,0)$. Hence $\mathcal{W} (d_V, d_P) = [ (h_{(d_V, d_P)}, H_{(d_V, d_P)})] = 0$.

    \medskip

     Conversely, we assume that $(d_V, d_P) \in \mathcal{D}_{\mu, \rho}$ and $\mathcal{W} (d_V, d_P) = 0$. Depending on a section $s$, let the given abelian extension produce the Poisson $2$-cocycle $(h, H)$. Since $\mathcal{W} (d_V, d_P) = 0$, it follows that the Poisson $2$-cocycle $(h_{(d_V, d_P)} , H_{(d_V, d_P)})$ is cohomologous to the null Poisson $2$-cocycle. That is, there exists a linear map $\varphi : P \rightarrow V$ such that
     \begin{align}
         d_V (h (x, y)) - h (d_P (x), y) - h (x, d_P (y)) =~&  \mu_x \varphi (y) - \varphi (x \cdot y) + \mu_y \varphi (x), \label{h1} \\
         d_V (H (x, y)) - H (d_P (x) , y ) - H (x, d_P (y)) = ~& \rho_x \varphi (y) - \varphi (\{ x, y \}) - \rho_y \varphi (x), \label{h2}
     \end{align}
     for all $x, y \in P$. We now define a linear map $d: E \rightarrow E$ by
     \begin{align}\label{defi-of-d}
         d(e) = d(s(x) + u) = s (d_P (x)) + d_V (u) + \varphi (x), \text{ for } e = s (x) + u \in E.
     \end{align}
     Then for any $e= s(x) + u$ and $e'= s(y) + v$ from the space $E$, we observe that
     \begin{align*}
        & d (e \cdot_E e') = d ((s(x) + u ) \cdot_E (s(y) + v) ) \\
        & =d \big(  s(x) \cdot_E s(y) + s(x) \cdot_E v + s(y) \cdot_E u  \big) \\
        & = d \big(  s (x \cdot y) + h (x, y) + \mu_x v + \mu_y u  \big) \\
         &= s (d_P (x \cdot y)) + d_V h (x, y) + d_V \mu_x v + d_V \mu_y u + \varphi (x \cdot y)  \quad (\text{from the definition of } d)\\
         &= s ( d_P (x) \cdot y + x \cdot d_P (y)) + h (d_P (x), y) + h (x, d_P (y)) + \mu_x \varphi (y) + \mu_y \varphi (x)\\
         & \qquad + \mu_{d_P (x)} v + \mu_x d_V (v) + \mu_{d_P (y)} u + \mu_y d_V (u) \quad  (\text{by } (\ref{h1}) \text{ and } (\ref{g1}))\\
         &= s (d_P (x)) \cdot_E s(y) + s(x) \cdot_E s (d_P (y)) + s(x) \cdot_E \varphi (y) + s(y) \cdot_E \varphi (x) \\
         & \qquad + s (d_P (x)) \cdot_E v + s(x) \cdot_E d_V (v) + s (d_P (y)) \cdot_E u + s(y) \cdot_E d_V (u) \\
         &= \big(  s (d_P (x)) + d_V (u) + \varphi (x)   \big) \cdot_E (s(y) + v) ~ \! + ~ \! (s(x) + u) \cdot_E \big(  s(d_P (y)) + d_V (v) + \varphi (y)   \big)\\
         &= d(e) \cdot_E e' + e \cdot_E d (e')
     \end{align*}
     and along the same way, $d \{ e, e'\}_E = \{ d (e) ,e' \}_E + \{ e, d (e') \}_E$. This shows that $d \in \mathrm{Der} (E)$ is a Poisson derivation on the Poisson algebra $E$. Further, it follows from (\ref{defi-of-d}) that $d (V) \subset V$ (additionally $d |_V = d_V$) and $pds = d_P$. Hence $d \in \mathrm{Der}_V (E)$ and $\eta (d) = (d|_V , pds) = (d_V, d_P)$ which shows that the pair $(d_V, d_P)$ is inducible.
\end{proof}

Next, we show that the Wells map defined above also fits into a short exact sequence. More precisely, we have the following.

\begin{theorem}
   Let (\ref{abelian-aut}) be an abelian extension of the Poisson algebra $(P, ~ \! \cdot ~ \!, \{ ~ , ~ \})$ by a representation $(V, \mu, \rho)$. Then there is a short exact sequence
   \begin{align}\label{ses2}
       0 \rightarrow \mathrm{Der} (P, V) \xrightarrow{\iota} \mathrm{Der}_V (E) \xrightarrow{\eta} \mathcal{D}_{\mu, \rho} \xrightarrow{\mathcal{W}} H^2_\mathrm{FGV} (P, V),
   \end{align}
   where the map $\iota$ is given by $\iota (D) (e) = D( p (e))$, for $D \in \mathrm{Der} (P, V)$ and $e \in E$.
\end{theorem}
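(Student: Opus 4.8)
The plan is to verify exactness of (\ref{ses2}) at the three nontrivial spots $\mathrm{Der}(P,V)$, $\mathrm{Der}_V(E)$ and $\mathcal{D}_{\mu,\rho}$, after first checking that $\iota$ is well defined. Here $\mathrm{Der}(P,V)$ denotes the space of Poisson derivations from $P$ to $V$, i.e. linear maps $D : P \to V$ with $D(x \cdot y) = \mu_x D(y) + \mu_y D(x)$ and $D(\{x,y\}) = \rho_x D(y) - \rho_y D(x)$ for all $x,y \in P$ (equivalently $\mathrm{Der}(P,V) = Z^1(P,V)$). To see that $\iota(D) = D \circ p$ lands in $\mathrm{Der}_V(E)$, I would use that $p$ is a Poisson algebra homomorphism and that the $E$-action on the abelian ideal $i(V) = \ker p$ factors through the prescribed representation via $p$, namely $v \cdot_E e = \mu_{p(e)} v$ and $\{e, v\}_E = \rho_{p(e)} v$ for $v \in V$, $e \in E$. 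A short computation using the derivation law for $D$ then gives $\iota(D)(e \cdot_E e') = \iota(D)(e)\cdot_E e' + e \cdot_E \iota(D)(e')$ and the analogous bracket identity, so $\iota(D) \in \mathrm{Der}(E)$; since $\iota(D)$ vanishes on $V$, in particular $\iota(D)(V) \subset V$. Injectivity of $\iota$ is immediate from the surjectivity of $p$, which settles exactness at $\mathrm{Der}(P,V)$.

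For exactness at $\mathrm{Der}_V(E)$ I would first record $\eta \circ \iota = 0$: indeed $\iota(D)|_V = 0$, and $\overline{\iota(D)}(x) = p\,\iota(D)\,s(x) = p(D(x)) = 0$ because $D(x) \in V = \ker p$. Conversely, let $d \in \ker \eta$, so $d|_V = 0$ and $\overline d = p d s = 0$. Writing an arbitrary $e \in E$ as $e = s(p(e)) + v$ with $v \in V$ and using $d|_V = 0$, one sees that $d(e) = d\,s(p(e))$ depends only on $p(e)$; moreover $d\,s(x) \in \ker p = i(V)$ since $\overline d = 0$. Hence $D := d \circ s : P \to V$ is well defined and $d = D \circ p = \iota(D)$. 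The derivation property of $d$ on $E$, combined once more with the action formulas on $i(V)$, forces $D$ to satisfy the two derivation identities above, so $D \in \mathrm{Der}(P,V)$ and $\ker \eta = \mathrm{im}\,\iota$.

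Finally, exactness at $\mathcal{D}_{\mu,\rho}$, i.e. $\mathrm{im}\,\eta = \ker \mathcal{W}$, is a direct consequence of Theorem \ref{thm-ind-der}. By definition $\mathrm{im}\,\eta$ is exactly the set of inducible pairs $(d_V, d_P)$, and Theorem \ref{thm-ind-der} characterizes these as the pairs lying in $\mathcal{D}_{\mu,\rho}$ with $\mathcal{W}(d_V, d_P) = 0$, which is precisely $\ker \mathcal{W}$. The argument is thus largely bookkeeping: the only genuinely substantive inputs are the identification of the $E$-module structure on $i(V)$ with the prescribed representation $(V,\mu,\rho)$ (used both for the well-definedness of $\iota$ and for the reconstruction of $D$ from $d \in \ker\eta$), and Theorem \ref{thm-ind-der}, which already does the real work at the right-hand end. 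Accordingly, I expect no single hard step, and the main care will lie in keeping the identifications $V \cong i(V) = \ker p$ and the action formulas consistent (in particular the sign $\{v,e\}_E = -\rho_{p(e)}v$) throughout.
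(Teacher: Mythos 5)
Your proposal is correct and follows essentially the same route as the paper: injectivity of $\iota$ for exactness at $\mathrm{Der}(P,V)$, the construction $D := d\circ s$ for $d\in\ker\eta$ (using $d|_V=0$ and $\overline{d}=0$) to get $\ker\eta=\mathrm{im}\,\iota$, and Theorem \ref{thm-ind-der} to identify $\mathrm{im}\,\eta$ with $\ker\mathcal{W}$. The only difference is cosmetic: you explicitly verify that $\iota(D)$ is a Poisson derivation of $E$ via the action formulas $v\cdot_E e=\mu_{p(e)}v$, $\{e,v\}_E=\rho_{p(e)}v$, a point the paper leaves implicit.
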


\begin{proof}
    The map $\iota$ is injective and hence the sequence is exact in the first place. Next, we take an element $d \in \mathrm{ker} (\eta)$. Thus, $\eta (d) = (d|_V, pds) = (0, 0)$ which means that $d_V = 0$ and $pds = 0$, where $s$ is any arbitrary but fixed section. For any $x \in P$, we observe that $d (s (x)) \in \mathrm{ker} (p) = \mathrm{im } (i) \cong V$. We now define a map $D : P \rightarrow V$ by $D (x) := d (s(x))$, for $x \in P$. Then for any $x, y \in P$,
    \begin{align*}
        D (x \cdot y) = d (s (x \cdot y)) =~& d \big(  s(x) \cdot_E s (y) - h (x, y) \big) \\
        =~& s (x) \cdot_E d s (y) + d (s(x)) \cdot_E s (y) \quad  (\because ~ d \text{ is a Poisson derivation and } d|_V = 0)\\
        =~& \mu_x D (y) + \mu_y D (x)
    \end{align*}
    and in the same way, $D \{ x, y \} = \rho_x D (y) - \rho_y D (x)$. This shows that $D \in \mathrm{Der} (P, V)$ is a Poisson derivation. Finally, for any $e = s(x) + u \in E$, we have
    \begin{align*}
         d(e) = d (s(x) + u) = d(s(x) ) = D (x) = \iota (D) (s(x) + u) = \iota (D) (e)
    \end{align*}
    which in turn implies that $d \in \mathrm{im} (\iota)$. Hence $\mathrm{ker} (\eta) \subset \mathrm{im} (\iota)$. Conversely, if $D \in \mathrm{Der} (P, V)$ then $\eta (\iota (D)) = \big( \iota (D)|_V, p (\iota (D)) s \big) = (0, 0)$ follows from the definition of $\iota 
    (D)$. Thus, we obtain that $\mathrm{im} (\iota) \subset \mathrm{ker} (\eta)$ and hence the sequence (\ref{ses2}) is also exact in the second place. Finally, to show that the sequence is exact in the third place, we begin with an element $(d_V, d_P) \in \mathrm{ker} (\mathcal{W})$. Then it follows from Theorem \ref{thm-ind-der} that the pair $(d_V, d_P)$ is inducible and hence it lies in the image of $\eta$. Conversely, if $(d_V, d_P) \in \mathrm{im} (\eta)$ then it is inducible and thus $\mathcal{W} (d_V, d_P) = 0$. This shows that $\mathrm{ker} (\mathcal{W}) = \mathrm{im} (\eta).$ This completes the proof.
\end{proof}

When (\ref{abelian-aut}) is a split abelian extension of the Poisson algebra $(P, ~\! \cdot ~\!, \{ ~. ~\})$ by the representation $(V, \mu, \rho)$, the Wells map $\mathcal{W} : \mathcal{D}_{\mu, \rho} \rightarrow H^2_\mathrm{FGV} (P, V)$ vanishes identically. Hence, in this case, the short exact sequence (\ref{ses2}) becomes 
\begin{align}\label{ses3}
    0 \rightarrow \mathrm{Der} (P, V) \xrightarrow{ \iota} \mathrm{Der}_V (E) \xrightarrow{ \eta} \mathcal{D}_{\mu, \rho} \rightarrow 0.
\end{align}
For any $(d_V, d_P) \in \mathcal{D}_{\mu, \rho}$, we define a map $d_{(d_V, d_P)} : E \rightarrow E$ by $d_{(d_V, d_P)} (s(x) + u) = s (d_{P} (x)) + d_V (u)$, for $s(x)+ u \in E.$ It is not hard to see that $d_{(d_V, d_P)} \in \mathrm{Der}_V (E)$. Hence there is a map $t : \mathcal{D}_{\mu, \rho} \rightarrow \mathrm{Der}_V (E)$ given by $t (d_V, d_P) := d_{(d_V, d_P)}$, for $(d_V, d_P) \in \mathcal{D}_{\mu, \rho}$. The map $t$ is indeed a Lie algebra homomorphism and additionally  $\eta t = \mathrm{Id}_{\mathcal{D}_{\mu, \rho}}$. As a result, we get that (\ref{ses3}) is a split short exact sequence of Lie algebras. Thus, we obtain the following isomorphism as Lie algebras
\begin{align*}
    \mathrm{Der}_V (E) = \mathrm{Der}_V (P \ltimes V) \cong \mathcal{D}_{\mu, \rho} \ltimes \mathrm{Der} (P, V).
\end{align*}

\medskip

\section{Proto-twilled Poisson algebras and deformation maps}\label{sec5}
In this section, we consider proto-twilled Poisson algebras and study deformation maps on them. We observe that various well-known operators on Poisson algebras such as Poisson algebra homomorphisms, Poisson derivations, Rota-Baxter operators of any weight, crossed homomorphisms, twisted Rota-Baxter operators, Reynolds operators and modified Rota-Baxter operators can be seen as deformation maps in suitable proto-twilled Poisson algebras. Finally, given a proto-twilled commutative algebra $(\mathcal{P} = P_1 \oplus P_2 , ~ \! \odot ~ \!)$ and a deformation map $r$ on it, we consider scalar deformations of $r$ with respect to an associative algebra deformation of $(\mathcal{P} = P_1 \oplus P_2, ~ \! \odot ~ \! )$. Then the map $r$ turns out to be a deformation map in the semi-classical proto-twilled Poisson algebra.

\begin{definition}
    A {\bf proto-twilled Poisson algebra} is a Poisson algebra $(\mathcal{P}, \odot, \{ \! \! \{ ~, ~ \} \! \! \} )$ whose underlying vector space has a direct sum decomposition $\mathcal{P} = P_1 \oplus P_2$ into subspaces.
\end{definition}

It is important to remark that, in a proto-twilled Poisson algebra  $(\mathcal{P}, \odot, \{ \! \! \{ ~, ~ \} \! \! \} )$ as above, $P_1$ and $P_2$ are just subspaces (need not be Poisson subalgebras) of $\mathcal{P}$. When exactly one of them is a Poisson subalgebra of $\mathcal{P}$, we call $(\mathcal{P}, \odot, \{ \! \! \{ ~, ~ \} \! \! \} )$ a {\bf quasi-twilled Poisson algebra}. Further, it is said to be a {\bf twilled Poisson algebra} when $P_1$ and $P_2$ are both Poisson subalgebras of $\mathcal{P}$. A twilled Poisson algebra is also called a {\em matched pair of Poisson algebras} in the literature \cite{ni-bai}.

\medskip

Let $(\mathcal{P} = P_1 \oplus P_2, \odot, \{ \! \! \{ ~, ~ \} \! \! \})$ be a proto-twilled Poisson algebra. Since $P_1$ and $P_2$ are both subspaces of $\mathcal{P} = P_1 \oplus P_2$, there are $12$ (bi)linear maps
\begin{align}\label{12-maps}
\begin{cases}
    ~ \cdot_1 : P_1 \times P_1 \rightarrow P_1 \quad (\text{symmetric})\\
    ~ \cdot_2 : P_2 \times P_2 \rightarrow P_2  \quad (\text{symmetric})\\
    ~ \mu : P_1 \rightarrow \mathrm{End} (P_2) \\
    ~ \nu : P_2 \rightarrow \mathrm{End} (P_1) \\
    ~ h : P_1 \times P_1 \rightarrow P_2  \quad (\text{symmetric})\\
    ~ \theta : P_2 \times P_2 \rightarrow P_1  \quad (\text{symmetric})
\end{cases}
\qquad \quad
\begin{cases}
    ~ \{ ~, ~ \}_1 : P_1 \times P_1 \rightarrow P_1  \quad (\text{antisymmetric})\\
    ~ \{ ~ , ~ \}_2 : P_2 \times P_2 \rightarrow P_2 \quad (\text{antisymmetric}) \\
    ~ \rho : P_1 \rightarrow \mathrm{End} (P_2) \\
    ~ \psi : P_2 \rightarrow \mathrm{End} (P_1) \\
    ~ H : P_1 \times P_1 \rightarrow P_2 \quad (\text{antisymmetric}) \\
    ~ \Theta : P_2 \times P_2 \rightarrow P_1 \quad (\text{antisymmetric})
\end{cases}
\end{align}
such that the operations $\odot$ and $\{ \! \! \{ ~, ~ \} \! \! \}$ can be expressed as
\begin{align*}
    (x, u) \odot (y, v) =~&  \big( x \cdot_1 y + \nu_u y + \nu_v x + \theta (u, v) ~ \! , ~ \! u \cdot_2 v + \mu_x v + \mu_y u + h (x, y) \big),\\
    \{ \! \! \{ (x, u), (y, v) \} \! \! \} =~& \big( \{ x, y \}_1 + \psi_u y - \psi_v x + \Theta (u, v) ~ \! , ~ \! \{ u, v \}_2 + \rho_x v - \rho_y u + H (x, y) \big),
\end{align*}
for $(x, u), (y, v) \in P_1 \oplus P_2.$ These $12$ (bi)linear operations defined above satisfy certain compatibility conditions to make $(P_1 \oplus P_2, \odot, \{ \! \! \{ ~, ~  \} \! \! \})$ into a Poisson algebra.

There are various examples of proto-twilled Poisson algebras arising from direct products, semidirect products, twisted semidirect products etc. Here we shall provide those examples which are useful in the study of some well-known operators on Poisson algebras.

\begin{exam} (Direct product)
    Let $({P}_1, ~ \! \cdot_1 ~ \!, \{  ~, ~ \}_1 )$ and $({P}_2, ~ \! \cdot_2 ~ \!, \{  ~, ~ \}_2)$ be two Poisson algebras. Then their {\bf direct product} $( P_1 \oplus P_2, ~ \! \cdot_\mathrm{dir} ~ \!, \{ ~, ~ \}_\mathrm{dir})$ is a proto-twilled Poisson algebra, where
    \begin{align*}
        (x, u) \cdot_\mathrm{dir} (y, v) := (x \cdot_1 y ~ \!, ~ \! u \cdot_2 v), \quad \{ (x, u), (y, v) \}_\mathrm{dir} := ( \{ x, y \}_1 ~ \!, ~ \! \{ u, v \}_2), \text{ for } (x, u), (y, v) \in P_1 \oplus P_2.
    \end{align*}
\end{exam}

\begin{exam}\label{semi} (Semidirect product) Let $({P}, ~ \! \cdot ~ \!, \{  ~, ~ \} )$ be a Poisson algebra and $(V, \mu, \rho)$ be a representation of it. Then their {\bf semidirect product} $(P \oplus V, ~ \! \cdot_\ltimes ~ \!, \{ ~, ~ \}_\ltimes )$ is a proto-twilled Poisson algebra, where
\begin{align*}
    (x, u) \cdot_\ltimes (y, v) := ( x \cdot y ~ \! , ~ \! \mu_x v + \mu_y u) ~~~ \text{ and } ~~~ \{ (x, u), (y, v) \}_\ltimes := ( \{ x, y \} ~ \! , ~ \! \rho_x v - \rho_y u ),
\end{align*}
for $(x, u), (y, v) \in P \oplus V$. Symmetrically, $(V \oplus P, ~ \! \cdot_\rtimes ~ \! , \{ ~, ~ \}_\rtimes)$ is also a proto-twilled Poisson algebra, where
\begin{align*}
    (u, x) \cdot_\rtimes (v, y) := (\mu_x v + \mu_y u ~ \! , ~ \! x \cdot y) ~~~~ \text{ and } ~~~~ \{ (u, x) , (v, y) \}_\rtimes := ( \rho_x v - \rho_y u ~ \! , ~ \! \{ x, y \} ).
\end{align*}
    \end{exam}

\medskip

Recall that \cite{ni-bai} an {\bf action} of a Poisson algebra $(P_1 , ~ \! \cdot_1 ~ \!, \{ ~, ~ \}_1)$ on another Poisson algebra $(P_2 , ~ \! \cdot_2 ~ \!, \{ ~, ~ \}_2)$ is given by maps $\mu, \rho : P_1 \rightarrow \mathrm{End}(P_2)$ that makes the vector space $P_2$ into a representation of the Poisson algebra $(P_1, ~\! \cdot_1 ~\!, \{ ~, ~ \}_1)$ such that for all $x \in P_1$ and $u,v \in P_2$,
\begin{align*}
    \mu_x (u \cdot_2 v) =~& \mu_x (u) \cdot_2 v,\\
     \mu_x \{ u, v \}_2 =~&  \{ \mu_x u , v \}_2 - u \cdot_2 \rho_x v, \\
     \rho_x ( u \cdot_2 v) =~& \rho_x (u) \cdot_2 v + u \cdot_2 \rho_x (v),\\
    \rho_x \{ u, v \}_2 =~& \{ \rho_x u, v \}_2 + \{ u, \rho_x v \}_2.
\end{align*}

\begin{exam}\label{semi-new}
    Given an action of a Poisson algebra $(P_1 , ~ \! \cdot_1 ~ \!, \{ ~, ~ \}_1)$ on the Poisson algebra $(P_2 , ~ \! \cdot_2 ~ \!, \{ ~, ~ \}_2)$ as above, the direct sum $P_1 \oplus P_2$ inherits a Poisson algebra structure with the operations
\begin{align*}
    (x, u) \cdot_\ltimes (y, v) := (x \cdot_1 y ~ \! , ~ \!  u \cdot_2 v + \mu_x v + \mu_y u ) ~ \text{ and } ~ \{ (x, u), (y, v ) \}_\ltimes := ( \{ x, y \}_1 ~ \! , ~ \! \{ u, v \}_2 + \rho_x v - \rho_y u ),
\end{align*}
for $(x, u), (y, v) \in P_1 \oplus P_2$. Hence $(P_1 \oplus P_2, ~\! \cdot_\ltimes ~ \! , \{ ~, ~ \}_\ltimes)$ is a proto-twilled Poisson algebra. Symmetrically, $(P_2 \oplus P_1 , ~ \! \cdot_\rtimes ~\! , \{ ~, ~ \}_\rtimes)$ is also a proto-twilled Poisson algebra, where
\begin{align*}
    (u, x) \cdot_\rtimes (v, y) := ( u \cdot_2 v + \mu_x v + \mu_y u  ~ \! , ~ \! x \cdot_1 y) ~~~ \text{ and } ~~~ \{ (u, x), (v, y) \}_\rtimes := (\{ u, v \}_2 + \rho_x v - \rho_y u  ~ \! , ~ \! \{ x, y \}_1 ).
\end{align*}
\end{exam}

 \begin{exam} (Twisted semidirect product) Let $(P, ~ \! \cdot ~ \!, \{ ~, ~ \})$ be a Poisson algebra and $(V, \mu, \rho)$ be a representation of it. Then for any Poisson $2$-cocycle $(h, H) \in Z^2 (P, V)$, the direct sum $P \oplus V$ carries a Poisson algebra structure whose multiplication and the Lie bracket are respectively given by
    \begin{align*}
        (x, u) \cdot_{\ltimes_h} (y, v) := ( x \cdot y ~ \! , ~ \!  \mu_x v + \mu_y u + h (x, y)) ~~ \text{ and } ~~
        \{ (x, u), (y, v) \}_{\ltimes_H} := ( \{ x, y \} ~ \! , ~ \! \rho_x v - \rho_y u + H (x, y)),
    \end{align*}
    for $(x, u), (y, v) \in P \oplus V$. The Poisson algebra $(P \oplus V, ~\! \cdot_{\ltimes_h} ~\! , \{ ~, ~ \}_{\ltimes_H})$ is a proto-twilled Poisson algebra.
         \end{exam}

    \begin{exam} (Reynolds semidirect product) This example is a particular case of the previous one where we consider the adjoint representation of a Poisson algebra and the Poisson $2$-cocycle  is to be the negative of the given Poisson structure. More precisely, let $(P, ~ \! \cdot ~ \!, \{ ~, ~ \})$ be a Poisson algebra. Then $(P \oplus P , ~ \! \cdot_{\ltimes_{-}} ~ \!, \{ ~, ~ \}_{\ltimes_{-}})$ is a proto-twilled Poisson algebra, where for $(x, u), (y, v ) \in P \oplus P$,
    \begin{align*}
        (x, u) \cdot_{\ltimes_{-}} (y, v) :=~& ( x \cdot y ~ \! , ~ \! x \cdot v + u \cdot y - x \cdot y),\\
        \{ (x, u), (y, v ) \}_{\ltimes_{-}} :=~& ( \{ x, y \} ~ \! , ~ \! \{ x, v \} + \{ u, y \} - \{ x, y \} ).
    \end{align*}
         \end{exam}

\medskip

         Another example of a proto-twilled Poisson algebra that is important in our study is given in the following result.

    \begin{proposition}
Let $(P, ~ \! \cdot ~ \!, \{~, ~ \})$ be a Poisson algebra. Then the triple $(P \oplus P, ~ \! \cdot_\mathrm{mod} ~ \!, \{ ~, ~ \}_\mathrm{mod})$ is a proto-twilled Poisson algebra, where
\begin{align*}
    (x, u) \cdot_\mathrm{mod} (y, v) :=~& ( x\cdot y + u \cdot v ~ \! , ~ \! x \cdot v + u \cdot y), \\
    \{ (x, u) , (y, v) \}_\mathrm{mod} :=~& ( \{ x, y \} + \{ u, v \} ~ \! , ~ \! \{ x, v \} + \{ u, y \}),
\end{align*}
for $(x, u), (y, v) \in P \oplus P$. This is called the modified semidirect product.
    \end{proposition}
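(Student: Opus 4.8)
The plan is to realize $(P \oplus P, \cdot_{\mathrm{mod}}, \{ ~, ~ \}_{\mathrm{mod}})$ as a tensor product Poisson algebra. Let $A := \mathbf{k}[t]/(t^2 - 1)$ denote the group algebra of $\Z/2\Z$, a commutative associative algebra with basis $\{ 1, t \}$ and relation $t^2 = 1$. I would first establish the general fact that for any Poisson algebra $(P, ~ \! \cdot ~ \!, \{ ~, ~ \})$ and any commutative associative algebra $(A, *)$, the tensor product $P \otimes A$ becomes a Poisson algebra under the bilinearly extended operations $(x \otimes a) \cdot (y \otimes b) := (x \cdot y) \otimes (a * b)$ and $\{ x \otimes a, y \otimes b \} := \{ x, y \} \otimes (a * b)$. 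I would then exhibit an explicit isomorphism $P \oplus P \cong P \otimes A$ under which $\cdot_{\mathrm{mod}}$ and $\{ ~, ~ \}_{\mathrm{mod}}$ correspond to these tensor product operations.

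Concretely, the linear map $\Phi : P \oplus P \to P \otimes A$ defined by $\Phi(x, u) := x \otimes 1 + u \otimes t$ is a vector space isomorphism. Using $1 * t = t$ and $t * t = 1$, one computes $\Phi(x,u) \cdot \Phi(y,v) = (x \cdot y + u \cdot v) \otimes 1 + (x \cdot v + u \cdot y) \otimes t = \Phi\big( (x,u) \cdot_{\mathrm{mod}} (y,v) \big)$, and likewise $\{ \Phi(x,u), \Phi(y,v) \} = (\{ x,y \} + \{ u,v \}) \otimes 1 + (\{ x,v \} + \{ u,y \}) \otimes t = \Phi\big( \{ (x,u), (y,v) \}_{\mathrm{mod}} \big)$. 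Thus $\Phi$ transports the Poisson structure on $P \otimes A$ to $(\cdot_{\mathrm{mod}}, \{ ~, ~ \}_{\mathrm{mod}})$, so the latter is a genuine Poisson algebra. Since a proto-twilled Poisson algebra is merely a Poisson algebra equipped with a vector space direct sum decomposition into two subspaces, taking $P_1 = P \oplus 0$ and $P_2 = 0 \oplus P$ immediately yields the claim.

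The only computational input is the general tensor product lemma, and it is routine: commutativity and associativity of the product follow from those of $\cdot$ and $*$; antisymmetry of the bracket follows from that of $\{ ~, ~ \}$ together with commutativity of $*$; the Jacobi identity reduces, after factoring out the common scalar $a * b * c$ (using commutativity and associativity of $*$), to the Jacobi identity in $P$; and the Leibniz rule reduces similarly to the Leibniz rule in $P$. The main point to be careful about is the bookkeeping in this lemma, namely ensuring that the $A$-factor is always $a * b * c$ in every term so that the $P$-identities can be applied termwise. If one prefers to avoid the tensor product language, an alternative is to verify the Poisson axioms for $\cdot_{\mathrm{mod}}$ and $\{ ~, ~ \}_{\mathrm{mod}}$ directly; this is entirely elementary but requires tracking both components through each axiom, which is precisely the tedium the tensor product identification removes.
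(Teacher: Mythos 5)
Your proposal is correct, but it proceeds quite differently from the paper. The paper's proof is a hybrid of citation and direct computation: it quotes the earlier result of Das--Mandal that $\cdot_{\mathrm{mod}}$ is associative (commutativity being inherited from $\cdot$), quotes Jiang--Sheng--Tang for the fact that $\{~,~\}_{\mathrm{mod}}$ is a Lie bracket, and then verifies the remaining Leibniz rule by expanding both sides of $\{ (x,u), (y,v)\cdot_{\mathrm{mod}} (z,w) \}_{\mathrm{mod}}$ componentwise. Your argument instead identifies $(P\oplus P, \cdot_{\mathrm{mod}}, \{~,~\}_{\mathrm{mod}})$ with the scalar extension $P \otimes A$, $A = \mathbf{k}[t]/(t^2-1)$, via $\Phi(x,u) = x\otimes 1 + u \otimes t$, and invokes the routine lemma that tensoring a Poisson algebra with a commutative associative algebra yields a Poisson algebra; I checked both the lemma (the cyclic $A$-factors $(ab)c$, $(bc)a$, $(ca)b$ do coincide by commutativity and associativity of $A$, so Jacobi and Leibniz reduce termwise to $P$) and the transport computation, and they are sound. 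The trade-off: the paper's route needs no auxiliary construction and stays consistent with how it treats its other semidirect-product examples, but it leans on two external references and still requires a lengthy Leibniz verification; your route is self-contained, proves all three axioms at once from a single uniform lemma, and is structurally more illuminating --- it exposes the ``modified'' structure as $P \otimes \mathbf{k}[\Z/2\Z]$, from which one can even observe (since $\mathrm{char}~\mathbf{k} = 0$, so $A \cong \mathbf{k}\times\mathbf{k}$ via $t \mapsto (1,-1)$) that the modified semidirect product is abstractly isomorphic to the direct product Poisson algebra $P \times P$, just equipped with a different direct sum decomposition, which is exactly the point of the proto-twilled formalism.
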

        
\begin{proof}
    It has been shown in \cite{das-mandal} that the multiplication $~\cdot_\mathrm{mod}~$ is associative. Further, it is commutative as the multiplication $~\cdot~$ is so. On the other hand, it is shown in \cite{sheng-quasi} that the bracket $\{ ~, ~ \}_\mathrm{mod}$ defines a Lie bracket on $P \oplus P$. Thus, it remains to show the Leibniz rule of a Poisson algebra. For any $(x, u), (y, v), (z, w) \in P \oplus P$, we have
    \begin{align*}
        &\{ (x, u) ,  (y, v) \cdot_\mathrm{mod} (z, w) \}_\mathrm{mod} \\
        &=  \{ (x, u), (y \cdot z + v \cdot w ~ \! , ~ \! y \cdot w + v \cdot z) \}_\mathrm{mod} \\
        &= \big(  \{ x, y \cdot z \} + \{ x, v \cdot w \} + \{ u , y \cdot w \} + \{ u , v \cdot z \} ~ \! , ~ \! \{x  , y \cdot w \} + \{ x, v \cdot z \} + \{ u, y \cdot z \} + \{ u , v \cdot w \}   \big) \\
        &= \big(   \{ x, y \} \cdot z + y \cdot \{ x, z \} + \{ x, v \} \cdot w + v \cdot \{ x, w \} + \{ u, y \} \cdot w + y \cdot \{ u, w \} + \{ u, v \} \cdot z + v \cdot \{ u, z \} ,\\
         & \qquad \{ x, y \} \cdot w + y \cdot \{ x, w \} + \{ x, v \} \cdot z + v \cdot \{ x, z \} + \{ u, y \} \cdot z + y \cdot \{ u, z \} + \{ u, v \} \cdot w + v \cdot \{ u, w \} \big) \\
         &= ( \{ x, y \} + \{ u, v \} ~ \! , ~ \! \{ x, v \} + \{ u, y \}) \cdot_\mathrm{mod} (z, w) ~+~ (y, v) \cdot_\mathrm{mod} ( \{x, z \} + \{ u, w \} ~ \! , ~ \! \{ x, w \} + \{ u, z \} ) \\
         &= \{ (x, u), (y, v ) \}_\mathrm{mod} \cdot_\mathrm{mod} (z, w) ~+~ (y, v) \cdot_\mathrm{mod} \{ (x, u), (z, w) \}_\mathrm{mod}.
    \end{align*}
    This proves the desired result.
\end{proof}

\medskip

\noindent {\bf Deformation maps in a proto-twilled Poisson algebra.} The notion of deformation maps in a twilled Poisson algebra was considered by Agore and Militaru \cite{agore} in the classifying compliments problem for Poisson algebras. Here we shall generalize their notion in a given proto-twilled Poisson algebra. Although deformation maps in a proto-twilled Poisson algebra do not solve any compliments problem, they unify a wide class of well-known operators on Poisson algebras. As a result, one could study all these operators by studying deformation maps.

\begin{definition}
    Let $(\mathcal{P} = P_1 \oplus P_2, \odot , \{ \! \! \{ ~, ~\} \! \! \})$ be a proto-twilled Poisson algebra. A linear map $r: P_2 \rightarrow P_1$ is said to be a {\bf deformation map} if the graph $Gr (r) = \{ ( r(u), u) ~ \! |~ \!  u \in P_2 \}$ is a Poisson subalgebra of $(\mathcal{P} , \odot , \{ \! \! \{ ~, ~\} \! \! \})$.
\end{definition}

We have seen that a proto-twilled Poisson algebra $(\mathcal{P} = P_1 \oplus P_2, \odot, \{ \! \! \{ ~, ~\} \! \! \})$ can be described by 12 (bi)linear maps given in (\ref{12-maps}) that satisfy certain compatibility conditions. With such notations, a linear map $r: P_2 \rightarrow P_1$ is a deformation map if and only if for all $u, v \in P_2$,
\begin{align}
    r(u) \cdot_1 r (v) + \nu_{u} r(v) + \nu_v r(u) + \theta (u, v) =~& r \big( u \cdot_2 v + \mu_{r(u)} v + \mu_{r(v)} u + h (r(u), r(v)) \big), \label{dm1}\\
    \{ r(u), r(v) \}_1 + \psi_u r (v) - \psi_v r(u) +  \Theta (u, v)  =~& r \big( \{ u, v \}_2 + \rho_{r(u)} v - \rho_{r(v)} u + H (r(u), r(v)) \big). \label{dm2}
\end{align}

\medskip

\begin{proposition}\label{prop-p2}
    Let $(\mathcal{P} = P_1 \oplus P_2, \odot , \{ \! \! \{ ~, ~\} \! \! \})$ be a proto-twilled Poisson algebra and $r : P_2 \rightarrow P_1$ be a deformation map on it. Then the vector space $P_2$ inherits a Poisson algebra structure whose multiplication and the Lie bracket are respectively given by
\begin{align*}
    u \cdot_r v :=~&  \mathrm{pr}_2 ((r(u), u) \odot (r(v), v)) = u \cdot_2 v + \mu_{r(u)} v + \mu_{r(v)} u + h (r(u), r(v)), \\
    \{ u, v \}_r :=~& \mathrm{pr}_2 \{ \! \! \{ (r(u), u), (r(v), v)\} \! \! \} = \{ u, v \}_2 + \rho_{r(u)} v - \rho_{ r(v)} u + H (r(u), r(v)),
\end{align*}
for $u, v \in P_2$. (We denote this induced Poisson algebra $(P_2, ~ \! \cdot_r ~ \! , \{ ~, ~ \}_r)$ simply by $(P_2)_r$.)
\end{proposition}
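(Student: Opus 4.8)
The plan is to exploit the defining property of a deformation map, namely that the graph $Gr(r) = \{ (r(u), u) \mid u \in P_2 \}$ is a Poisson subalgebra of $(\mathcal{P}, \odot, \{ \! \! \{ ~, ~ \} \! \! \})$, and then to transport this Poisson structure to $P_2$ along the evident linear isomorphism. Concretely, I would introduce the map $\phi : P_2 \rightarrow Gr(r)$ given by $\phi(u) = (r(u), u)$, which is a linear bijection since the second coordinate recovers $u$ and every element of $Gr(r)$ is of this form. The goal is then to show that $\phi$ intertwines the operations $\cdot_r$ and $\{ ~, ~ \}_r$ on $P_2$ with the restricted Poisson operations on $Gr(r)$.

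First I would observe that, because $Gr(r)$ is closed under both $\odot$ and $\{ \! \! \{ ~, ~ \} \! \! \}$, the products $(r(u), u) \odot (r(v), v)$ and $\{ \! \! \{ (r(u), u), (r(v), v) \} \! \! \}$ again lie in $Gr(r)$; this closure is exactly the content of the deformation map equations (\ref{dm1}) and (\ref{dm2}). Now any element of $Gr(r)$ is uniquely determined by its second component and has first component equal to $r$ applied to that second component. Reading off the second component of these two products via $\mathrm{pr}_2$ recovers precisely the formulas for $u \cdot_r v$ and $\{ u, v \}_r$ stated in the proposition. Consequently the first components are forced to equal $r(u \cdot_r v)$ and $r(\{ u, v \}_r)$ respectively, yielding the intertwining relations $\phi(u) \odot \phi(v) = \phi(u \cdot_r v)$ and $\{ \! \! \{ \phi(u), \phi(v) \} \! \! \} = \phi(\{ u, v \}_r)$.

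These intertwining identities say that $\phi$ transports the operations on $P_2$ to the restricted operations on $Gr(r)$. Since $Gr(r)$, being a Poisson subalgebra of the Poisson algebra $\mathcal{P}$, is itself a Poisson algebra, and since all the Poisson axioms (commutativity and associativity of the product, antisymmetry and the Jacobi identity for the bracket, and the Leibniz compatibility linking them) are multilinear identities preserved under transport of structure along a linear isomorphism, it follows at once that $(P_2, ~ \! \cdot_r ~ \!, \{ ~, ~ \}_r)$ is a Poisson algebra.

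The only point requiring genuine care is the step that forces the first component of each product to be $r$ of its second component; this is exactly where the deformation map conditions (\ref{dm1})--(\ref{dm2}) enter, and it is what makes the transport-of-structure argument go through. Everything else is formal. A more computational alternative would be to verify associativity, the Jacobi identity, and the Leibniz rule directly from the explicit formulas for $\cdot_r$ and $\{ ~, ~ \}_r$, but this is considerably more laborious and ultimately redundant given the subalgebra characterization.
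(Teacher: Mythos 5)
Your proof is correct and follows essentially the route the paper intends: the proposition is stated there as an immediate consequence of the graph characterization (note the defining formulas already read off $\mathrm{pr}_2$ of products of graph elements), and the paper's later argument in Section \ref{sec6} --- transporting the structure through the isomorphism $\mathrm{Id}+\widetilde{r}$ and observing that $\{0\}\oplus P_2$ becomes a Poisson subalgebra --- is the same transport-of-structure idea, since $\mathrm{Id}+\widetilde{r}$ carries $\{0\}\oplus P_2$ onto $Gr(r)$ exactly by your map $\phi$. Your observation that closure of $Gr(r)$ forces the first component to be $r$ applied to the second is precisely the point where (\ref{dm1})--(\ref{dm2}) are used, matching the paper.
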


In the following, we provide a list of operators (defined on a Poisson algebra) that can be regarded as deformation maps in suitable proto-twilled Poisson algebras.

\medskip

\begin{exam}
    ({\bf Poisson homomorphisms.}) Let $(P_1, ~\! \cdot_1 ~\! , \{ ~, ~\}_1)$ and $(P_2, ~\! \cdot_2 ~\! , \{ ~, ~\}_2)$ be two Poisson algebras. A {\bf Poisson homomorphism} or a homomorphism of Poisson algebras from $P_1$ to $P_2$ is a linear map $ \varphi: P_1 \rightarrow P_2$ that satisfies $ \varphi (x \cdot_1 y) = \varphi(x) \cdot_2 \varphi (y)$ and $\varphi (\{ x, y \}_1) = \{ \varphi(x), \varphi(y) \}_2$, for all $x, y \in P_1.$ Equivalently, $\varphi$ is simultaneously a commutative algebra homomorphism and Lie algebra homomorphism. It is easy to see that a linear map $ \varphi: P_1 \rightarrow P_2$ between two Poisson algebras is a Poisson homomorphism if and only if its graph $Gr (\varphi) = \{ ( \varphi(x), x) ~ \! |~ \! x \in P_1 \}$ is a Poisson subalgebra of the direct product $(P_2 \oplus P_1, ~ \! \cdot_\mathrm{dir} ~\!, \{ ~, ~\}_\mathrm{dir})$. Thus, it turns out that a Poisson homomorphism $\varphi: P_1 \rightarrow P_2$ can be regarded as a deformation map in the proto-twilled Poisson algebra $(P_2 \oplus P_1, ~ \! \cdot_\mathrm{dir} ~\!, \{ ~, ~\}_\mathrm{dir})$.
\end{exam}

\begin{exam}
    ({\bf Poisson derivations.}) Let $(P, ~\! \cdot ~\! , \{ ~, ~\})$ be a Poisson algebra and $(V, \mu, \rho)$ be a representation of it. A linear map $D : P \rightarrow V$ is said to be a {\bf Poisson derivation} on $P$ with values in the representation $V$ if 
\begin{align*}
    D ( x \cdot y) = \mu_x D (y) + \mu_y D(x) \quad  \text{ and } \quad D (\{ x, y \}) = \rho_x D (y) -\rho_y D (x), \text{ for all } x, y \in P.
\end{align*}
A Poisson derivation is nothing but a Poisson $1$-cocycle of the Poisson algebra $P$ with coefficients in the representation $V$. It can be checked that a linear map $D : P \rightarrow V$ is a Poisson derivation if and only if its graph $Gr (D) = \{ (D(x), x) ~ \! | ~ \! x \in P \} \subset V \oplus P$ is a Poisson subalgebra of the semidirect product $(V \oplus P, ~\! \cdot_\rtimes ~\!, \{ ~, ~\}_\rtimes)$ given in Example \ref{semi}. Thus, a Poisson derivation can be seen as a deformation map in the proto-twilled Poisson algebra $(V \oplus P, ~ \! \cdot_\rtimes ~\!, \{ ~, ~ \}_\rtimes)$.
\end{exam}

\begin{exam}
    ({\bf Relative Rota-Baxter operators (of weight $0$).}) Let $(P, ~\! \cdot ~\! , \{ ~, ~\})$ be a Poisson algebra and $(V, \mu, \rho)$ be a representation of it. A linear map $r : V \rightarrow P$ is said to be a {\bf relative Rota-Baxter operator} \cite{ni-bai} (also called an {\bf $\mathcal{O}$-operator}) with respect to the representation $V$ if
\begin{align*}
    r (u) \cdot r( v) = r ( \mu_{r(u)} v + \mu_{r(v)} u) \quad \text{ and } \quad \{ r(u), r(v) \} = r ( \rho_{r(u)} v - \rho_{r(v)} u), \text{ for all } u, v \in V.
\end{align*}
It follows that a linear map $r: V \rightarrow P$ is a relative Rota-Baxter operator if and only if the graph $Gr (r) = \{ (r(u), u) ~ \! | ~ \! u \in V \}$ is a Poisson subalgebra of the semidirect product $(P \oplus V, ~ \! \cdot_\ltimes ~ \!, \{ ~, ~ \}_\ltimes)$. Hence a relative Rota-Baxter operator can also be seen as a deformation map in a proto-twilled Poisson algebra.

When $(V, \mu, \rho) = (P, \mu = \mathrm{ad}_{\cdot } ~ \!, \rho =\mathrm{ad}_{ \{ ~, ~ \} } )$ is the adjoint representation of the Poisson algebra $(P, ~ \! \cdot ~ \!, \{ ~, ~ \})$, a relative Rota-Baxter operator is simply called a Rota-Baxter operator of weight $0$ on the Poisson algebra $P$ \cite{aguiar}. Rota-Baxter operators of weight $0$ play an important role in the study of pre-Poisson algebras and Poisson bialgebras.
\end{exam}

\begin{exam}
    ({\bf Relative Rota-Baxter operators of weight $1$.}) 
Let $(P_1, ~\! \cdot_1 ~\! , \{ ~, ~\}_1)$ be a Poisson algebra that acts on another Poisson algebra $(P_2, ~\! \cdot_2 ~\! , \{ ~, ~\}_2)$ by the maps $\mu, \rho : P_1 \rightarrow \mathrm{End} (P_2).$ Then a linear map $r: P_2 \rightarrow P_1$ is said to be a {\bf relative Rota-Baxter operator of weight $1$} if it satisfies
\begin{align*}
    r (u) \cdot_1 r (v) = r ( u \cdot_2 v + \mu_{r(u)} v + \mu_{r(v)} u ) ~~ \text{ and } ~~ \{ r(u), r(v) \} = r (  \{ u, v \}_2 + \rho_{r(u)} v - \rho_{r(v)} u ),
\end{align*}
    for all $u, v \in P_2$. When the Poisson algebra structure on $P_2$ is trivial, then $r$ becomes a relative Rota-Baxter operator (of weight $0$).

\begin{proposition}
\begin{itemize}
    \item[(i)] For any Poisson algebra $(P, ~\! \cdot ~\! , \{ ~, ~\})$, the map $- \mathrm{Id} : P \rightarrow P$, $x \mapsto - x$ is a  Rota-Baxter operator of weight $1$.
    \item[(ii)] Moreover, if $r : P \rightarrow P$ is a Rota-Baxter operator of weight $1$ then $- \mathrm{Id} - r : P \rightarrow P$ is so.
\end{itemize}
\end{proposition}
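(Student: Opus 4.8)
The plan is to first make the defining relations fully explicit in the present special case. A Rota-Baxter operator of weight $1$ on $(P, ~\! \cdot ~\!, \{ ~, ~ \})$ is precisely the relative Rota-Baxter operator of weight $1$ associated to the adjoint representation $(P, \mu = \mathrm{ad}_\cdot, \rho = \mathrm{ad}_{\{ ~, ~ \}})$, so that $\mu_x y = x \cdot y$ and $\rho_x y = \{ x, y \}$. Substituting these into the two defining identities and using the commutativity of $\cdot$ and the skew-symmetry of $\{ ~, ~ \}$, a linear map $r : P \rightarrow P$ is a Rota-Baxter operator of weight $1$ exactly when
\[
r(x) \cdot r(y) = r\big( x \cdot y + r(x) \cdot y + x \cdot r(y) \big) \quad \text{and} \quad \{ r(x), r(y) \} = r\big( \{ x, y \} + \{ r(x), y \} + \{ x, r(y) \} \big)
\]
hold for all $x, y \in P$. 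Both parts will be checked against these two identities separately (the commutative one and the Lie one). Since the verification for the product and for the bracket is formally identical, I would present one in detail and note that the other follows verbatim, with $\cdot$ replaced by $\{ ~, ~ \}$ and commutativity replaced by skew-symmetry.

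For part (i) I would simply put $r = -\mathrm{Id}$. On the left of the first identity, $(-x) \cdot (-y) = x \cdot y$; on the right, the argument collapses to $x \cdot y - x \cdot y - x \cdot y = -x \cdot y$, and applying $-\mathrm{Id}$ returns $x \cdot y$, so the two sides agree. The same three-term cancellation handles the bracket using skew-symmetry, which proves (i).

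For part (ii) I would set $r' := -\mathrm{Id} - r$, so $r'(x) = -x - r(x)$, and verify the first identity for $r'$. Expanding the left side gives $r'(x) \cdot r'(y) = (x + r(x)) \cdot (y + r(y)) = x \cdot y + x \cdot r(y) + r(x) \cdot y + r(x) \cdot r(y)$, and the last summand is rewritten by the weight-$1$ relation for $r$ as $r\big( x \cdot y + r(x) \cdot y + x \cdot r(y) \big)$. On the right side, the argument $x \cdot y + r'(x) \cdot y + x \cdot r'(y)$ simplifies to $-\big( x \cdot y + r(x) \cdot y + x \cdot r(y) \big)$; applying $r' = -\mathrm{Id} - r$ and using the linearity of $r$ to pull the overall sign out of the $r$-term then reproduces exactly $x \cdot y + r(x) \cdot y + x \cdot r(y) + r\big( x \cdot y + r(x) \cdot y + x \cdot r(y) \big)$, matching the left side. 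The identical manipulation, with the bracket replacing the product and skew-symmetry replacing commutativity, settles the Lie relation and completes (ii).

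The computation is entirely routine; the only thing to watch is the bookkeeping of signs, where three ingredients must be used in concert: commutativity of $\cdot$ (respectively skew-symmetry of $\{ ~, ~ \}$) to symmetrize the mixed terms, the linearity of $r$ to extract the global minus sign from the argument, and the weight-$1$ relation for $r$ itself to convert $r(x) \cdot r(y)$ (respectively $\{ r(x), r(y) \}$) into an $r$ of the bracketed expression. I do not anticipate any genuine obstacle beyond keeping these cancellations straight.
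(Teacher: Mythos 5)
Your proof is correct, and since the paper states this proposition without any proof (treating it as a routine verification), your direct computation is exactly the argument the authors leave implicit: unwinding the weight-$1$ identities for the adjoint action and checking them for $-\mathrm{Id}$ and $-\mathrm{Id}-r$. The sign bookkeeping in both parts checks out, including the key step in (ii) where the argument of $r'$ collapses to $-\big(x\cdot y + r(x)\cdot y + x\cdot r(y)\big)$ and applying $r'=-\mathrm{Id}-r$ reproduces the expanded left-hand side.
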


The following result shows that relative Rota-Baxter operators of weight $1$ can also be characterized by their graphs.

\begin{proposition}
    Let $(P_1, ~\! \cdot_1 ~\! , \{ ~, ~\}_1)$ be a Poisson algebra that acts on another Poisson algebra $(P_2, ~\! \cdot_2 ~\! , \{ ~, ~\}_2)$ by the maps $\mu, \rho : P_1 \rightarrow \mathrm{End} (P_2).$ A linear map $r : P_2 \rightarrow P_1$ is a relative Rota-Baxter operator of weight $1$ if and only if the graph $Gr (r) = \{ (r(u), u) ~ \! | ~ \! u \in P_2 \}$ is a Poisson subalgebra of $(P_1 \oplus P_2, ~ \! \cdot_\ltimes ~ \! , \{ ~, ~ \}_\ltimes)$ given in Example \ref{semi-new}.
\end{proposition}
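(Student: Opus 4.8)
The plan is to exploit the observation that an element $(x, w) \in P_1 \oplus P_2$ belongs to the graph $Gr(r)$ precisely when $x = r(w)$. Since $r$ is linear, $Gr(r)$ is automatically a vector subspace, so establishing that it is a Poisson subalgebra reduces to checking that it is closed under the commutative product $\cdot_\ltimes$ and the Lie bracket $\{~,~\}_\ltimes$ of the semidirect product described in Example \ref{semi-new}.

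First I would take two arbitrary elements $(r(u), u), (r(v), v) \in Gr(r)$ and compute their product, obtaining
\begin{align*}
    (r(u), u) \cdot_\ltimes (r(v), v) = \big( r(u) \cdot_1 r(v), ~ u \cdot_2 v + \mu_{r(u)} v + \mu_{r(v)} u \big).
\end{align*}
By the characterization of membership above, this product lies in $Gr(r)$ if and only if
\begin{align*}
    r(u) \cdot_1 r(v) = r\big( u \cdot_2 v + \mu_{r(u)} v + \mu_{r(v)} u \big),
\end{align*}
which is precisely the first defining identity of a relative Rota-Baxter operator of weight $1$.

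Next I would run the identical argument for the bracket. Computing
\begin{align*}
    \{ (r(u), u), (r(v), v) \}_\ltimes = \big( \{ r(u), r(v) \}_1, ~ \{ u, v \}_2 + \rho_{r(u)} v - \rho_{r(v)} u \big),
\end{align*}
I find that this lies in $Gr(r)$ if and only if the first component equals $r$ of the second, i.e. $\{ r(u), r(v) \}_1 = r( \{ u, v \}_2 + \rho_{r(u)} v - \rho_{r(v)} u )$, the second Rota-Baxter identity. Conjoining the two equivalences yields that $Gr(r)$ is a Poisson subalgebra if and only if $r$ satisfies both weight-$1$ relative Rota-Baxter identities, proving both implications at once.

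The main (and essentially only) conceptual point is recognizing that the semidirect product of Example \ref{semi-new} is engineered so that the second component of a product (respectively bracket) of two graph elements coincides exactly with the argument of $r$ appearing on the right-hand side of the corresponding Rota-Baxter relation; there is no genuine obstacle beyond this bookkeeping. The only technical care needed is to note the automatic linearity of $Gr(r)$ before reducing the subalgebra condition to closedness under the two operations.
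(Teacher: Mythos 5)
Your proof is correct and is exactly the argument the paper has in mind: the paper omits the proof as straightforward, and its general characterization of deformation maps (equations (\ref{dm1})--(\ref{dm2})) specializes, for the semidirect product of Example \ref{semi-new} where $\nu = \psi = 0$, $h = H = 0$ and $\theta = \Theta = 0$, to precisely the two closure conditions you derive. Nothing is missing; noting the linearity of $Gr(r)$ and reducing the subalgebra condition to closedness under $\cdot_\ltimes$ and $\{~,~\}_\ltimes$ is the whole content.
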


It follows from the above result that a relative Rota-Baxter operator of weight $1$ is nothing but a deformation map in the proto-twilled Poisson algebra $(P_1 \oplus P_2, ~ \! \cdot_\ltimes ~ \!, \{ ~, ~ \}_\ltimes)$.
\end{exam}

\begin{exam}
    ({\bf Crossed homomorphisms.}) Let $(P_1, ~\! \cdot_1 ~\! , \{ ~, ~\}_1)$ be a Poisson algebra that acts on another Poisson algebra $(P_2, ~\! \cdot_2 ~\! , \{ ~, ~\}_2)$ by the maps $\mu, \rho : P_1 \rightarrow \mathrm{End} (P_2).$ Then a linear map $D : P_1 \rightarrow P_2$ is said to be a {\bf relative crossed homomorphism} on $P_1$ with values in $P_2$ if
\begin{align*}
    D (x \cdot_1 y) = \mu_x D(y) + \mu_y D (x) + D (x) \cdot_2 D(y) ~~ \text{ and  } ~ ~ D (\{ x, y \}_1) = \rho_x D(y) - \rho_y D(x) + \{ D(x) , D(y)\}_2,
\end{align*}
    for all $x, y \in P_1$. If the Poisson algebra structure on $P_2$ is trivial, then a relative crossed homomorphism becomes a Poisson derivation. It is important to remark that the inverse of a relative Rota-Baxter operator of weight $1$ (when invertible) is a relative crossed homomorphism on $P_1$ with values in $P_2$. A useful characterization of a (relative) crossed homomorphism that is important in our purpose is the following.
\end{exam}

\begin{proposition}
Let $(P_1, ~\! \cdot_1 ~\! , \{ ~, ~\}_1)$ be a Poisson algebra that acts on another Poisson algebra $(P_2, ~\! \cdot_2 ~\! , \{ ~, ~\}_2)$ by the maps $\mu, \rho : P_1 \rightarrow \mathrm{End} (P_2).$ A linear map $D: P_1 \rightarrow P_2$ is a relative crossed homomorphism if and only if its graph $Gr (D) = \{ (D(x), x) ~\! | ~ \! x \in P_1 \}$ is a Poisson subalgebra of the proto-twilled Poisson algebra $(P_2 \oplus P_1, ~\! \cdot_\rtimes ~ \! , \{ ~, ~ \}_\rtimes)$ considered in Example \ref{semi-new}.
\end{proposition}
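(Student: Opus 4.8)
The plan is to exploit the fact that $Gr(D) = \{(D(x), x) \mid x \in P_1\}$ is automatically a linear subspace of $P_2 \oplus P_1$, being the graph of a linear map. Consequently, checking whether $Gr(D)$ is a Poisson subalgebra of $(P_2 \oplus P_1, ~\! \cdot_\rtimes ~\!, \{ ~, ~ \}_\rtimes)$ reduces to checking closure under the two operations $\cdot_\rtimes$ and $\{ ~, ~ \}_\rtimes$ recalled in Example \ref{semi-new}; the Leibniz rule and the two associativity/Jacobi conditions need not be reverified, as they are inherited from the ambient proto-twilled Poisson algebra. I would therefore take two arbitrary generators $(D(x), x)$ and $(D(y), y)$ of the graph and compute their product and bracket directly from the defining formulas.

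First I would compute, using the symmetric semidirect product of Example \ref{semi-new},
\begin{align*}
(D(x), x) \cdot_\rtimes (D(y), y) = \big( D(x) \cdot_2 D(y) + \mu_x D(y) + \mu_y D(x) ~ \! , ~ \! x \cdot_1 y \big).
\end{align*}
The crucial observation is that the $P_1$-component of this output is exactly $x \cdot_1 y$, so that the product already has the correct shape to possibly lie in $Gr(D)$; indeed, it belongs to $Gr(D)$ if and only if its first component equals $D(x \cdot_1 y)$, that is, if and only if
\begin{align*}
D(x \cdot_1 y) = \mu_x D(y) + \mu_y D(x) + D(x) \cdot_2 D(y).
\end{align*}
This is precisely the multiplicative half of the relative crossed homomorphism condition.

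Next I would run the identical argument for the Lie bracket. Computing
\begin{align*}
\{ (D(x), x), (D(y), y) \}_\rtimes = \big( \{ D(x), D(y) \}_2 + \rho_x D(y) - \rho_y D(x) ~ \! , ~ \! \{ x, y \}_1 \big),
\end{align*}
the second component is $\{ x, y \}_1$, so the bracket lies in $Gr(D)$ if and only if its first component equals $D(\{ x, y \}_1)$, which reproduces exactly the remaining identity
\begin{align*}
D(\{ x, y \}_1) = \rho_x D(y) - \rho_y D(x) + \{ D(x), D(y) \}_2.
\end{align*}
Combining the two computations, $Gr(D)$ is closed under both $\cdot_\rtimes$ and $\{ ~, ~ \}_\rtimes$, hence a Poisson subalgebra, if and only if both defining identities of a relative crossed homomorphism hold, giving the claimed equivalence.

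I do not anticipate any genuine obstacle here: the argument is a direct unwinding of the semidirect product formulas. The only point requiring a moment's attention is the remark that, in each case, the $P_1$-component of the output is forced to be $x \cdot_1 y$ (respectively $\{ x, y \}_1$), and it is this that pins down against which element of $Gr(D)$ the $P_2$-component must be compared. This is what converts the closure requirement into precisely the two crossed homomorphism identities, with nothing left over to check.
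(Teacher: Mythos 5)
Your proof is correct and is exactly the straightforward graph-unwinding argument the paper intends: the paper states this proposition without proof, treating it as immediate from the definitions of the semidirect product $(P_2 \oplus P_1, ~\!\cdot_\rtimes~\!, \{~,~\}_\rtimes)$ in Example \ref{semi-new}, just as it does for the analogous characterizations of relative Rota-Baxter and modified Rota-Baxter operators. Your key observation—that the $P_1$-component of the product (resp.\ bracket) is forced to be $x \cdot_1 y$ (resp.\ $\{x,y\}_1$), so closure is equivalent to the $P_2$-component equaling $D(x \cdot_1 y)$ (resp.\ $D(\{x,y\}_1)$)—is precisely the one point of substance, and you handle it correctly.
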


    Thus, a relative crossed homomorphism can be regarded as a deformation map in $(P_2 \oplus P_1, ~\! \cdot_\rtimes ~ \!, \{ ~, ~ \}_\rtimes)$.
  
\begin{exam}
    ({\bf Twisted Rota-Baxter operators.})
    Let $(P, ~ \! \cdot ~ \!, \{ ~, ~ \})$ be a Poisson algebra and $(V, \mu, \rho)$ be a representation of it. For any Poisson $2$-cocycle $(h, H) \in Z^2 (P, V)$, a linear map $r : V \rightarrow P$ is said to be a {\bf $(h, H)$-twisted Rota-Baxter operator} if it satisfies
    \begin{align*}
        r (u) \cdot r (v) =~& r \big( \mu_{r(u)} v + \mu_{r(v)} u + h (r(u), r(v)) \big),\\
        \{ r(u), r(v) \} =~& r \big(  \rho_{r(u)} v - \rho_{r (v)} u + H ( r(u), r(v))  \big),
    \end{align*}
    for all $u, v \in V$. Twisted Rota-Baxter operators on Poisson algebras were studied and the underlying algebraic structure was found in \cite{das-baishya} (see also \cite{ospel}). An important characterization of a twisted Rota-Baxter operator is given by the following.

    \begin{proposition}
        Let $(P, ~ \! \cdot ~ \!, \{ ~, ~ \})$ be a Poisson algebra, $(V, \mu, \rho)$ be a representation and $(h, H)$ be a Poisson $2$-cocycle. Then a linear map $r : V \rightarrow P$ is a $(h, H)$-twisted Rota-Baxter operator if and only if the graph $Gr (r) = \{   (r (u), u) ~\! | ~ \! u \in V \}$ is a Poisson subalgebra of the twisted semidirect product $(P \oplus V, ~\!  \cdot_{\ltimes_h} ~ \! , \{ ~, ~ \}_{\ltimes_H})$.
    \end{proposition}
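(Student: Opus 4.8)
The plan is to reduce the subalgebra condition to closure under the two binary operations and then to unwind the definitions directly. Since $r$ is linear, $Gr(r) = \{(r(u), u) \mid u \in V\}$ is automatically a linear subspace of $P \oplus V$, being the image of the injective linear map $V \to P \oplus V$, $u \mapsto (r(u), u)$. Consequently $Gr(r)$ is a Poisson subalgebra of $(P \oplus V, ~\! \cdot_{\ltimes_h} ~\!, \{ ~, ~ \}_{\ltimes_H})$ if and only if it is closed under both the multiplication $\cdot_{\ltimes_h}$ and the Lie bracket $\{ ~, ~ \}_{\ltimes_H}$. The observation driving the proof is the membership criterion: a pair $(a, w) \in P \oplus V$ lies in $Gr(r)$ precisely when $a = r(w)$.

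First I would compute the product of two generic graph elements under $\cdot_{\ltimes_h}$. From the definition of the twisted semidirect product,
\begin{align*}
(r(u), u) \cdot_{\ltimes_h} (r(v), v) = \big( r(u) \cdot r(v), ~ \mu_{r(u)} v + \mu_{r(v)} u + h(r(u), r(v)) \big),
\end{align*}
for all $u, v \in V$. By the membership criterion, this element lies in $Gr(r)$ if and only if its first component equals $r$ applied to its second component, that is,
\begin{align*}
r(u) \cdot r(v) = r\big( \mu_{r(u)} v + \mu_{r(v)} u + h(r(u), r(v)) \big),
\end{align*}
which is exactly the first defining identity of an $(h, H)$-twisted Rota-Baxter operator.

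Next I would carry out the analogous computation for the Lie bracket, obtaining
\begin{align*}
\{ (r(u), u), (r(v), v) \}_{\ltimes_H} = \big( \{ r(u), r(v) \}, ~ \rho_{r(u)} v - \rho_{r(v)} u + H(r(u), r(v)) \big),
\end{align*}
so that this element lies in $Gr(r)$ if and only if
\begin{align*}
\{ r(u), r(v) \} = r\big( \rho_{r(u)} v - \rho_{r(v)} u + H(r(u), r(v)) \big),
\end{align*}
which is precisely the second defining identity. Combining the two computations, $Gr(r)$ is closed under both operations, hence is a Poisson subalgebra, if and only if both twisted Rota-Baxter identities hold; this establishes the equivalence in both directions simultaneously. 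I do not anticipate any genuine obstacle, as the statement is a direct unwinding of definitions. The only point deserving a brief remark is that the graph is automatically a subspace, so the subalgebra condition genuinely reduces to the two closure conditions and no separate linearity verification is required.
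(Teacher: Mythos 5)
Your proof is correct and is exactly the argument the paper intends: the paper leaves this proposition unproved as a routine verification, and it is the specialization of its general criterion (equations (\ref{dm1})--(\ref{dm2}) characterizing deformation maps) to the twisted semidirect product, where $\nu = \psi = \theta = \Theta = 0$ and the operations on $V$ are trivial. Your direct unwinding — graph membership means the first component equals $r$ of the second, plus the observation that linearity of $r$ makes $Gr(r)$ automatically a subspace — recovers precisely the two twisted Rota-Baxter identities, so nothing is missing.
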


    The above characterization enables us to realize a $(h, H)$-twisted Rota-Baxter operator $r : V \rightarrow P$ as a deformation map in the twisted semidirect product $(P \oplus V, ~ \! \cdot_{\ltimes_h} ~\! , \{ ~, ~ \}_{\ltimes_H})$.
\end{exam}

\begin{exam}
    ({\bf Reynolds operators.}) Let $(P, ~ \! \cdot ~ \!, \{ ~, ~ \})$ be a Poisson algebra. A linear map $r : P \rightarrow P$ is said to be a {\bf Reynolds operator} on $P$ if
 \begin{align}
 r (x) \cdot r (y) =~& r \big( r(x) \cdot y + x \cdot r (y) - r(x) \cdot r(y) \big),\label{rey-iden1}\\ 
     \{ r(x), r(y) \} =~& r \big( \{ r(x), y \} + \{ x, r(y) \} - \{ r(x) , r(y) \} \big),\label{rey-iden2}
 \end{align}
 for all $x, y \in P$. Reynolds operators are examples of twisted Rota-Baxter operators. It is important to note that invertible Reynolds operators are closely related to Poisson derivations.

 \begin{proposition}
     Let $(P, ~ \! \cdot ~ \!, \{ ~, ~ \})$ be a Poisson algebra. An invertible linear map $r : P \rightarrow P$ is a Reynolds operator on $P$ if and only if the map $r^{-1} - \mathrm{Id} : P \rightarrow P$ is a Poisson derivation.
 \end{proposition}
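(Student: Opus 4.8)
The plan is to rewrite the two defining identities of a Reynolds operator so that they become statements about $r^{-1}$ alone, and then to read off the Poisson derivation property of $D := r^{-1} - \mathrm{Id}$. Since $r$ is invertible, I would first apply $r^{-1}$ to both sides of (\ref{rey-iden1}) and (\ref{rey-iden2}) to get
\begin{align*}
    r^{-1}(r(x) \cdot r(y)) &= r(x) \cdot y + x \cdot r(y) - r(x) \cdot r(y),\\
    r^{-1}(\{ r(x), r(y) \}) &= \{ r(x), y \} + \{ x, r(y) \} - \{ r(x), r(y) \},
\end{align*}
for all $x, y \in P$. Because $r$ is a bijection, the reparametrization $a = r(x)$, $b = r(y)$ (equivalently $x = r^{-1}(a)$, $y = r^{-1}(b)$) lets $a, b$ range over all of $P$, and the identities become
\begin{align*}
    r^{-1}(a \cdot b) &= a \cdot r^{-1}(b) + r^{-1}(a) \cdot b - a \cdot b,\\
    r^{-1}(\{ a, b \}) &= \{ a, r^{-1}(b) \} + \{ r^{-1}(a), b \} - \{ a, b \},
\end{align*}
which are now conditions on the single operator $r^{-1}$.

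Next I would substitute $r^{-1} = \mathrm{Id} + D$ into each identity and expand. In the associative case the left side becomes $a \cdot b + D(a \cdot b)$, while the right side simplifies to $a \cdot D(b) + D(a) \cdot b + a \cdot b$ once the three copies of $a \cdot b$ collapse; cancelling $a \cdot b$ leaves exactly $D(a \cdot b) = a \cdot D(b) + D(a) \cdot b$, the Leibniz rule for the commutative product. The Lie-bracket identity is handled identically: the substitution yields $\{a, b\} + D(\{a, b\})$ on the left and $\{a, D(b)\} + \{D(a), b\} + \{a, b\}$ on the right, whence $D(\{a, b\}) = \{a, D(b)\} + \{D(a), b\} = \{a, D(b)\} - \{b, D(a)\}$. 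These are precisely the two conditions defining a Poisson derivation $D : P \rightarrow P$ with respect to the adjoint representation, so that $r$ being a Reynolds operator forces $D = r^{-1} - \mathrm{Id}$ to be a Poisson derivation.

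For the converse I would simply run the chain of equalities backwards: starting from the derivation identities for $D$, I re-add the term $a \cdot b$ (respectively $\{a, b\}$), rewrite $\mathrm{Id} + D = r^{-1}$ to recover the two identities for $r^{-1}$, substitute $a = r(x)$, $b = r(y)$, and apply $r$ to both sides to return to (\ref{rey-iden1}) and (\ref{rey-iden2}). Every step above is an equivalence, so no separate converse computation is required. There is no genuine obstacle here: the argument is a reversible algebraic manipulation, and the only point requiring care is that applying $r^{-1}$ and performing the reparametrization $a = r(x)$, $b = r(y)$ are legitimate exactly because $r$ is assumed invertible. This is where the invertibility hypothesis is used, and it is what places the defining relations of $r$ and the derivation relations of $r^{-1} - \mathrm{Id}$ in perfect correspondence.
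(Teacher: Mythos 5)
Your proposal is correct and follows essentially the same route as the paper: the paper's proof simply asserts that, by invertibility of $r$, the Reynolds identities are equivalent to the derivation identities for $r^{-1} - \mathrm{Id}$, which is exactly the equivalence you establish by applying $r^{-1}$, reparametrizing $a = r(x)$, $b = r(y)$, and substituting $r^{-1} = \mathrm{Id} + D$. Your write-up just makes explicit the reversible manipulations that the paper leaves implicit.
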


 \begin{proof}
     In view of the invertibility of $r$, the identities (\ref{rey-iden1}), (\ref{rey-iden2})  are respectively equivalent to
     \begin{align*}
         (r^{-1} - \mathrm{Id}) (x \cdot y) =~& (r^{-1} - \mathrm{Id}) (x) \cdot y + x \cdot (r^{-1} - \mathrm{Id}) (y),\\
         (r^{-1} - \mathrm{Id}) ( \{ x, y \} ) =~& \{ (r^{-1} - \mathrm{Id}) (x), y \} + \{ x, (r^{-1} - \mathrm{Id})(y) \},
     \end{align*}
     for all $x, y \in P$. This proves the desired result.
 \end{proof}

 Finally, an easy observation shows that a linear map $r : P \rightarrow P$ is a Reynolds operator on $P$ if and only if the graph $Gr (r) = \{ (r(x), x) ~\! | ~ \! x \in P \}$ is a Poisson subalgebra of the Reynolds semidirect product $(P \oplus P, ~\! \cdot_{\ltimes_{-}} ~ \! , \{ ~, ~ \}_{\ltimes_{-}})$. Hence Reynolds operators on a Poisson algebra $P$ are equivalent to deformation maps in the proto-twilled Poisson algebra $(P \oplus P, ~\! \cdot_{\ltimes_{-}} ~ \! , \{ ~, ~ \}_{\ltimes_{-}})$.

\end{exam}  

\begin{exam}
({\bf Modified Rota-Baxter operators.}) For a Poisson algebra $(P, ~ \! \cdot ~ \!, \{ ~, ~ \})$, a linear map $r : P \rightarrow P$ is said to be a {\bf modified Rota-Baxter operator} on $P$ if
\begin{align}
    r (x) \cdot r (y) = r ( r(x) \cdot y + x \cdot r (y)) - x \cdot y ~~~ \text{ and } ~~~ \{ r(x), r(y) \} = r ( \{ r(x), y \} + \{ x, r(y) \}) - \{ x , y \},
\end{align}
for all $x, y \in P$. Modified Rota-Baxter operators are closely related to Rota-Baxter operators of weight $1$ by the following result.
\begin{proposition}
    Let  $(P, ~ \! \cdot ~ \!, \{ ~, ~ \})$ be a Poisson algebra. Then a linear map $r : P \rightarrow P$ is a Rota-Baxter operator of weight $1$ if and only if the map $\mathrm{Id} + 2 r : P \rightarrow P$ is a modified Rota-Baxter operator on $P$.
\end{proposition}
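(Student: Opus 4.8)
The plan is to set $R := \mathrm{Id} + 2r$ and to show that the two defining identities of a modified Rota-Baxter operator for $R$ are, term by term, equivalent to the two defining identities of a Rota-Baxter operator of weight $1$ for $r$. Since we work over a field of characteristic $0$, the assignment $r \mapsto \mathrm{Id} + 2r$ is a bijection on $\mathrm{Hom}(P, P)$, with inverse $R \mapsto \tfrac{1}{2}(R - \mathrm{Id})$; hence it suffices to prove this equivalence of identities, and both implications of the proposition follow at once. Recall that, specializing the weight-$1$ relative Rota-Baxter identity to the adjoint representation, $r$ is a Rota-Baxter operator of weight $1$ precisely when $r(x) \cdot r(y) = r\big(x \cdot y + r(x) \cdot y + x \cdot r(y)\big)$ and $\{ r(x), r(y) \} = r\big(\{ x, y \} + \{ r(x), y \} + \{ x, r(y) \}\big)$ for all $x, y \in P$.

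First I would handle the commutative associative product. Expanding $R(x) \cdot R(y)$ via $R = \mathrm{Id} + 2r$ gives $x \cdot y + 2\, r(x) \cdot y + 2\, x \cdot r(y) + 4\, r(x) \cdot r(y)$, while $R(x) \cdot y + x \cdot R(y) = 2\big(x \cdot y + r(x) \cdot y + x \cdot r(y)\big)$, so that $R\big(R(x) \cdot y + x \cdot R(y)\big) - x \cdot y$ equals $x \cdot y + 2\, r(x) \cdot y + 2\, x \cdot r(y) + 4\, r\big(x \cdot y + r(x) \cdot y + x \cdot r(y)\big)$. Thus the modified Rota-Baxter identity for $R$ reduces, after cancelling the common terms $x \cdot y + 2\, r(x) \cdot y + 2\, x \cdot r(y)$ from both sides, to $4\, r(x) \cdot r(y) = 4\, r\big(x \cdot y + r(x) \cdot y + x \cdot r(y)\big)$; dividing by $4$ yields exactly the weight-$1$ Rota-Baxter identity for the product, and the same computation read in reverse gives the converse.

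Then I would repeat the identical manipulation for the Lie bracket. The only point requiring a little care is the sign coming from skew-symmetry: in the weight-$1$ identity one has $\rho_{r(u)} v - \rho_{r(v)} u = \{ r(u), v \} - \{ r(v), u \} = \{ r(u), v \} + \{ u, r(v) \}$, which is why the bracket identity for $r$ takes the symmetric-looking form displayed above. Expanding $\{ R(x), R(y) \} = \{ x, y \} + 2\, \{ r(x), y \} + 2\, \{ x, r(y) \} + 4\, \{ r(x), r(y) \}$ and $R\big(\{ R(x), y \} + \{ x, R(y) \}\big) - \{ x, y \} = \{ x, y \} + 2\, \{ r(x), y \} + 2\, \{ x, r(y) \} + 4\, r\big(\{ x, y \} + \{ r(x), y \} + \{ x, r(y) \}\big)$, then cancelling the common terms and dividing by $4$, recovers the weight-$1$ bracket identity for $r$, and conversely.

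The computation is entirely routine, and there is essentially no obstacle beyond the bookkeeping. The two things worth flagging explicitly are that the characteristic-$0$ hypothesis is used twice—once to divide by $4$ in passing between the two identities, and once to guarantee that $r \mapsto \mathrm{Id} + 2r$ is a bijection, so that the equivalence of identities upgrades to an equivalence of the two classes of operators—and that the bracket case relies on the rearrangement $-\{ r(v), u \} = \{ u, r(v) \}$ to match the symmetric form appearing in the definition of the modified operator.
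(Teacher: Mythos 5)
Your proof is correct and follows essentially the same route as the paper: a direct expansion of both sides of the modified Rota--Baxter identities for $\mathrm{Id}+2r$, showing that after cancellation they differ from the weight-$1$ identities for $r$ by an overall factor of $4$. The only cosmetic difference is that the paper packages the computation as showing the modified defect equals $4$ times the weight-$1$ defect, while you cancel common terms from the two sides; the remark about bijectivity of $r \mapsto \mathrm{Id}+2r$ is harmless but not needed, since both directions of the stated equivalence concern the same fixed $r$.
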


\begin{proof}
    By a direct calculation, we have
    \begin{align*}
        &(\mathrm{Id} + 2 r ) (x) \cdot (\mathrm{Id} + 2 r ) (y) - (\mathrm{Id} + 2 r ) \big(  (\mathrm{Id} + 2 r )(x) \cdot y + x \cdot (\mathrm{Id} + 2 r )(y)   \big) + x \cdot y \\
        &= x\cdot y + 2 x \cdot r(y) + 2 r(x) \cdot y + 4 r(x) \cdot r(y) - 2 (\mathrm{Id} + 2 r ) (x \cdot y + r(x) \cdot y + x \cdot r(y)) + x \cdot y \\
        &= 4 \big(   r(x) \cdot r(y) - r ( x \cdot y + r(x) \cdot y + x \cdot r(y))   \big)
    \end{align*}
    and similarly,
    \begin{align*}
        &\{ (\mathrm{Id} + 2 r ) (x) , (\mathrm{Id} + 2 r ) (y) \} - (\mathrm{Id} + 2 r ) \big(  \{ (\mathrm{Id} + 2 r ) (x), y \} + \{ x, (\mathrm{Id} + 2 r ) (y) \}  \big) + \{ x, y \}\\
        &= 4 \big(   \{ r (x) , r(y) \} - r ( \{ x, y \} + \{ r(x), y \} + \{ x, r(y) \} ) \big).
    \end{align*}
    This shows the desired result.
\end{proof}
    
Finally, one can show the following characterization of a modified Rota-Baxter operator on a Poisson algebra.

\begin{proposition}
    Let  $(P, ~ \! \cdot ~ \!, \{ ~, ~ \})$ be a Poisson algebra. Then a linear map $r : P \rightarrow P$ is a modified Rota-Baxter operator on $P$ if and only if $Gr (r) = \{ (r(x), x) ~\! | ~ \! x \in P \}$ is a Poisson subalgebra of the modified semidirect product $(P \oplus P, ~ \! \cdot_\mathrm{mod}, \{ ~, ~ \}_\mathrm{mod})$.
\end{proposition}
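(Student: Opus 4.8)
The plan is to exploit the simple characterization of the graph: a pair $(a, b) \in P \oplus P$ belongs to $Gr(r) = \{(r(x), x) \mid x \in P\}$ if and only if its first coordinate equals $r$ applied to its second coordinate, that is, $a = r(b)$. Consequently, verifying that $Gr(r)$ is a Poisson subalgebra of $(P \oplus P, ~\!\cdot_\mathrm{mod}, \{~,~\}_\mathrm{mod})$ reduces to computing the modified product and the modified bracket of two generic graph elements and then demanding that each resulting first coordinate be $r$ of the corresponding resulting second coordinate.

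First I would take two arbitrary elements $(r(x), x)$ and $(r(y), y)$ of $Gr(r)$ and compute their product directly from the definition of $\cdot_\mathrm{mod}$, obtaining
\[
(r(x), x) \cdot_\mathrm{mod} (r(y), y) = \big( r(x) \cdot r(y) + x \cdot y, ~\! r(x) \cdot y + x \cdot r(y) \big).
\]
This pair lies in $Gr(r)$ precisely when $r(x) \cdot r(y) + x \cdot y = r\big( r(x) \cdot y + x \cdot r(y) \big)$, which upon rearrangement is exactly the first defining identity of a modified Rota-Baxter operator. Next I would perform the analogous computation for the bracket, obtaining
\[
\{ (r(x), x), (r(y), y) \}_\mathrm{mod} = \big( \{ r(x), r(y) \} + \{ x, y \}, ~\! \{ r(x), y \} + \{ x, r(y) \} \big),
\]
and the requirement that this again lie in $Gr(r)$ reads $\{ r(x), r(y) \} + \{ x, y \} = r\big( \{ r(x), y \} + \{ x, r(y) \} \big)$, which is exactly the second defining identity. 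Since $x, y \in P$ are arbitrary, closure of $Gr(r)$ under both operations is equivalent to the conjunction of the two modified Rota-Baxter identities, yielding the desired equivalence in both directions simultaneously.

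There is essentially no hard step in this argument: the proof is a direct verification in which the two modified Rota-Baxter identities are read off from the two coordinate-matching conditions. The only point requiring care is bookkeeping—keeping track of which summands land in which coordinate of $\cdot_\mathrm{mod}$ and $\{~,~\}_\mathrm{mod}$, and confirming that the symmetric placement of the cross terms in the modified semidirect product aligns with the form of the modified Rota-Baxter identities so that the rearrangement reproduces them verbatim. Once the two displayed computations are in place, the biconditional follows immediately from the fact that membership in the graph is governed by a single equation $a = r(b)$.
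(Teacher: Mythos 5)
Your proof is correct and is precisely the direct graph-closure verification the paper has in mind (the paper itself omits the argument, remarking only that the proof is straightforward). The computations of $\cdot_\mathrm{mod}$ and $\{~,~\}_\mathrm{mod}$ on pairs $(r(x),x)$, $(r(y),y)$ and the coordinate-matching condition $a = r(b)$ reproduce the two modified Rota-Baxter identities exactly, giving both directions at once.
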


The proof of the above proposition is straightforward. As a consequence of this result, we get that a modified Rota-Baxter operator $r : P \rightarrow P$ on a Poisson algebra $P$ is equivalent to a deformation map in the proto-twilled Poisson algebra $(P \oplus P, ~ \! \cdot_\mathrm{mod}, \{ ~, ~ \}_\mathrm{mod})$.
\end{exam}

\subsection{Scalar deformations of $r$} It is well-known that the semi-classical limit of an associative algebra deformation of a commutative algebra carries a Poisson algebra structure \cite{kont}. Recently, this result was extended when the algebras are endowed with representations \cite{chen-bai-guo}. Among others, the authors considered scalar deformations of a Rota-Baxter operator $r$ defined on a given commutative algebra and showed that the same operator $r$ becomes a Rota-Baxter operator on the semi-classical limit Poisson algebra. Here we shall generalize these results by considering proto-twilled associative algebra deformations of a proto-twilled commutative algebra.

\medskip

A {\bf proto-twilled associative algebra} is an associative algebra whose underlying space has a direct sum decomposition into subspaces. Further, it is said to be a {\bf proto-twilled commutative algebra} if the underlying associative algebra is commutative.
Quasi-twilled and twilled associative algebras or commutative algebras can be defined similarly. Let $(\mathcal{P} = P_1 \oplus P_2, \odot)$ be a proto-twilled associative algebra or a proto-twilled commutative algebra. Then a linear map $r: P_2 \rightarrow P_1$ is said to be a {\bf deformation map} if its graph $Gr (r) = \{ ( r(u), u) ~ \! |~ \!  u \in P_2 \}$ is a subalgebra of $(\mathcal{P} = P_1 \oplus P_2, \odot)$.

\medskip

Let  $(\mathcal{P} = P_1 \oplus P_2, \odot)$ be a proto-twilled commutative algebra. Consider the space $\mathcal{P} [ \! \! [ t ] \! \! ] $ of all formal power series in $t$ with coefficients from $\mathcal{P}$. Then we have $\mathcal{P}[ \! \! [t ] \! \! ] \cong P_1 [ \! \! [t ] \! \! ] \oplus P_2 [ \! \! [t ] \! \! ]$ as ${\bf k}[ \! \! [t ] \! \! ]$-modules.
An associative algebra deformation of $(\mathcal{P} = P_1 \oplus P_2, \odot)$ is a formal sum
\begin{align*}
    \odot_t = \odot_0 + t \odot_1 + t^2 \odot_2 + \cdots \in \mathrm{Hom} (\mathcal{P}^{\otimes 2}, \mathcal{P}) [ \! \! [ t] \! \! ] ~ \text{ with } \odot_0 = \odot
\end{align*}
that makes $(\mathcal{P} [ \! \! [t ] \! \! ] \cong P_1 [ \! \! [t ] \! \! ] \oplus P_2 [ \! \! [t ] \! \! ], \odot_t)$ into a proto-twilled associative algebra over the base ring ${\bf k}[ \! \! [t ] \! \! ].$
Then by adapting the semi-classical limit construction, we get the following.

\begin{theorem}
    Let $(\mathcal{P} = P_1 \oplus P_2, \odot)$ be a proto-twilled commutative algebra and let $\odot_t= \sum_{i=0}^\infty t^i \odot_i$ (with $\odot_0 = \odot$) be an associative algebra deformation of it. Define a bracket 
    \begin{align*}
         \{ \! \! \{ ~, ~ \} \! \! \}: \mathcal{P} \times \mathcal{P} \rightarrow \mathcal{P} ~ \text{ by } ~ \{ \! \! \{ (x, u), (y, v) \} \! \! \}:= (x, u) \odot_1 (y, v) - (y, v) \odot_1 (x, u),
    \end{align*}
    for $(x, u) , (y, v) \in \mathcal{P}$. Then $(\mathcal{P} = P_1 \oplus P_2 , \odot, \{ \! \! \{ ~, ~ \} \! \! \})$ is a proto-twilled Poisson algebra.

    In particular, when $(\mathcal{P} = P_1 \oplus P_2, \odot)$ is a quasi-twilled (resp. twilled) commutative algebra and $\odot_t = \sum_{i=0}^\infty t^i \odot_i$ (with $\odot_0 = \odot$) is an associative deformation that makes $(\mathcal{P} [ \! \! [t ] \! \! ] \cong  P_1 [ \! \! [t ] \! \! ] \oplus P_2 [ \! \! [t ] \! \! ], \odot_t )$ into a quasi-twilled (resp. twilled) associative algebra, then $(\mathcal{P} = P_1 \oplus P_2, \odot, \{ \! \! \{ ~, ~ \} \! \! \})$ is a quasi-twilled (resp. twilled) Poisson algebra.
\end{theorem}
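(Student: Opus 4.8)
The plan is to verify directly that the triple $(\mathcal{P}, \odot, \{\!\!\{~,~\}\!\!\})$ satisfies all the axioms of a Poisson algebra; once this is done, the direct sum decomposition $\mathcal{P} = P_1 \oplus P_2$ is already part of the data, so $(\mathcal{P} = P_1 \oplus P_2, \odot, \{\!\!\{~,~\}\!\!\})$ is automatically a proto-twilled Poisson algebra. Commutativity and associativity of $\odot = \odot_0$ are given, and skew-symmetry of $\{\!\!\{~,~\}\!\!\}$ is immediate from its definition, so the real content is the Jacobi identity and the Leibniz rule. For both I would work over $\mathcal{P}[\![t]\!]$ and exploit the single structural fact that $\odot_t$ is associative. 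The key device is the commutator $[a,b]_t := a\odot_t b - b\odot_t a$; for any associative product this is a Lie bracket and each $[a,-]_t$ is a derivation of $\odot_t$. The point is that, because $\odot_0$ is commutative, $[a,b]_t$ has no constant term, so $[a,b]_t = t\,\{\!\!\{a,b\}\!\!\} + O(t^2)$; extracting the appropriate $t$-coefficients of the automatically valid identities for $[-,-]_t$ then yields the corresponding identities for $\{\!\!\{~,~\}\!\!\}$.

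For the Leibniz rule I would start from the derivation identity $[a, b\odot_t c]_t = [a,b]_t\odot_t c + b\odot_t[a,c]_t$, which holds in every associative algebra. On the left, $b\odot_t c = b\odot c + O(t)$, and since $\odot_0$ is commutative the constant term of $[a, b\odot_t c]_t$ vanishes; its coefficient of $t$ is exactly $\{\!\!\{a, b\odot c\}\!\!\}$, because the contribution $a\odot_0(b\odot_1 c) - (b\odot_1 c)\odot_0 a$ cancels by commutativity. On the right, $[a,b]_t = t\,\{\!\!\{a,b\}\!\!\}+O(t^2)$ and $\odot_t = \odot + O(t)$, so the coefficient of $t$ is $\{\!\!\{a,b\}\!\!\}\odot c + b\odot\{\!\!\{a,c\}\!\!\}$. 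Equating the two coefficients of $t$ gives the Leibniz rule.

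For the Jacobi identity I would use that the Jacobiator of $[-,-]_t$ vanishes identically in $t$. A short order count, using commutativity of $\odot_0$ once more, shows that $[[a,b]_t,c]_t = t^2\,\{\!\!\{\{\!\!\{a,b\}\!\!\},c\}\!\!\} + O(t^3)$: the factor $[a,b]_t$ already carries a $t$, and the outer commutator with $c$ again starts at order $t$ because its order-$t^0$ term is a $\odot_0$-commutator, which vanishes. Summing over the cyclic permutations, the vanishing of the coefficient of $t^2$ in the Jacobiator is precisely the Jacobi identity for $\{\!\!\{~,~\}\!\!\}$. I expect this order-bookkeeping step to be the main (though mild) obstacle: one must track that commutativity of $\odot_0$ raises each commutator by one order in $t$, so that the first-order bracket $\{\!\!\{~,~\}\!\!\}$ controls the $t^2$-coefficient. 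Note that no explicit use of the Hochschild cocycle condition for $\odot_1$ is required, since everything follows formally from associativity of $\odot_t$.

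Finally, for the quasi-twilled and twilled refinements I would argue as follows. Suppose the deformation makes $P_i[\![t]\!]$ a subalgebra of $(\mathcal{P}[\![t]\!], \odot_t)$ for one (resp. both) of $i = 1, 2$. Taking $a, b \in P_i$ constant in $t$, closure forces $a\odot_t b = \sum_k t^k(a\odot_k b) \in P_i[\![t]\!]$, hence $a\odot_k b \in P_i$ for every $k$. In particular $a\odot_0 b = a\odot b \in P_i$ and $a\odot_1 b, b\odot_1 a \in P_i$, so $\{\!\!\{a,b\}\!\!\} = a\odot_1 b - b\odot_1 a \in P_i$. Thus $P_i$ is closed under both $\odot$ and $\{\!\!\{~,~\}\!\!\}$, i.e. it is a Poisson subalgebra of $(\mathcal{P}, \odot, \{\!\!\{~,~\}\!\!\})$; this yields the quasi-twilled (one Poisson subalgebra) resp.\ twilled (both Poisson subalgebras) conclusion, completing the proof.
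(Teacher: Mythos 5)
Your proof is correct. The paper states this theorem without proof, appealing to the well-known semi-classical limit construction (citing Kontsevich); your argument --- extracting the $t$- and $t^2$-coefficients of the commutator identities for the associative product $\odot_t$, using commutativity of $\odot_0$ to raise each commutator by one order in $t$, and then the coefficientwise closure check of $P_i$ under every $\odot_k$ for the quasi-twilled/twilled refinement --- is exactly that standard construction written out in full, so it coincides with the paper's intended approach.
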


\begin{theorem}
    Let $(\mathcal{P} = P_1 \oplus P_2, \odot)$ be a proto-twilled commutative algebra and $r: P_2 \rightarrow P_1$ be a deformation map on it. Suppose there is an associative algebra deformation $\odot_t = \sum_{i=0}^\infty t^i \odot_i$ (with $\odot_0 = \odot$) of the given proto-twilled commutative algebra for which the ${\bf k}[ \! \! [t ] \! \! ]$-linear extension of $r$, namely,
    \begin{align*}
        r_{\bf k} : P_2 [ \! \! [t ] \! \! ] \rightarrow P_1 [ \! \! [t ] \! \! ] ~~~  \text{ defined by } ~~~ r_{\bf k} ( \sum_{i=0}^\infty u_i t^i) = \sum_{i=0}^\infty r(u_i) t^i
    \end{align*}
    is a deformation map in the proto-twilled associative algebra $(\mathcal{P} [ \! \! [t ] \! \! ] \cong P_1 [ \! \! [t ] \! \! ] \oplus P_2 [ \! \! [t ] \! \! ], \odot_t)$. Then the linear map $r $ is a deformation map in the semi-classical proto-twilled Poisson algebra $(\mathcal{P} = P_1 \oplus P_2, \odot, \{ \! \! \{ ~, ~ \} \! \! \}).$
\end{theorem}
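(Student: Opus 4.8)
The plan is to verify directly that the graph $Gr(r) = \{ (r(u), u) \mid u \in P_2 \}$ is a Poisson subalgebra of the semi-classical proto-twilled Poisson algebra $(\mathcal{P} = P_1 \oplus P_2, \odot, \{\!\!\{\ ,\ \}\!\!\})$; by definition this is precisely what it means for $r$ to be a deformation map there. Thus $Gr(r)$ must be shown to be closed under both the commutative product $\odot$ and the Lie bracket $\{\!\!\{\ ,\ \}\!\!\}$. Closure under $\odot$ is immediate: by hypothesis $r$ is a deformation map in the proto-twilled commutative algebra $(\mathcal{P}, \odot)$, so $Gr(r)$ is already a subalgebra for $\odot$. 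Hence the entire content of the theorem reduces to showing that $Gr(r)$ is closed under the semi-classical bracket.

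The key observation I would exploit is that the deformation-map hypothesis on $r_{\bf k}$ controls the deformed product $\odot_t$ order by order in $t$. First I would note that for every $u \in P_2$ the constant power series $(r(u), u) = (r_{\bf k}(u), u)$ lies in $Gr(r_{\bf k})$, since $r_{\bf k}$ restricts to $r$ on constants. Because $Gr(r_{\bf k})$ is a subalgebra of $(\mathcal{P}[[t]], \odot_t)$, the product $(r(u), u) \odot_t (r(v), v)$ again lies in $Gr(r_{\bf k})$ for all $u, v \in P_2$.

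Next I would expand this product as $(r(u), u) \odot_t (r(v), v) = \sum_{i \geq 0} t^i\, w_i$, where $w_i := (r(u), u) \odot_i (r(v), v) \in P_1 \oplus P_2$, and compare coefficients. Writing $w_i = (a_i, b_i)$ with $a_i \in P_1$ and $b_i \in P_2$, membership of the whole series in $Gr(r_{\bf k})$ together with the ${\bf k}[[t]]$-linearity of $r_{\bf k}$ forces $a_i = r(b_i)$ for every $i$; that is, each coefficient $w_i = (r(u), u) \odot_i (r(v), v)$ already lies in $Gr(r)$. Applying this at order $i = 1$, and likewise to the swapped product $(r(v), v) \odot_t (r(u), u)$, yields that both $(r(u), u) \odot_1 (r(v), v)$ and $(r(v), v) \odot_1 (r(u), u)$ belong to $Gr(r)$.

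Finally, since $r$ is linear the graph $Gr(r)$ is a linear subspace of $P_1 \oplus P_2$ and hence closed under differences. By the definition of the semi-classical bracket,
\[
\{\!\!\{ (r(u), u), (r(v), v) \}\!\!\} = (r(u), u) \odot_1 (r(v), v) - (r(v), v) \odot_1 (r(u), u)
\]
is a difference of two elements of $Gr(r)$ and therefore lies in $Gr(r)$. This establishes closure under the bracket and, combined with closure under $\odot$, shows that $Gr(r)$ is a Poisson subalgebra, completing the proof. I do not anticipate a genuine obstacle: the only step requiring care is the coefficient comparison above, where one must use that $r_{\bf k}$ is ${\bf k}[[t]]$-linear, so that the graph condition $\mathrm{pr}_1 = r_{\bf k} \circ \mathrm{pr}_2$ on the full power series is equivalent to the order-by-order identities $a_i = r(b_i)$.
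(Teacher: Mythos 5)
Your proposal is correct and follows essentially the same route as the paper: both arguments embed $Gr(r)$ into $Gr(r_{\bf k})$, use that $Gr(r_{\bf k})$ is a subalgebra of $(\mathcal{P}[\![t]\!], \odot_t)$, and extract the coefficient of $t$ to conclude that $\{\!\!\{ (r(u),u), (r(v),v) \}\!\!\} \in Gr(r)$. The only (cosmetic) difference is that the paper forms the commutator $(r(u),u)\odot_t(r(v),v) - (r(v),v)\odot_t(r(u),u)$ first and then ``divides by $t$ and lets $t \to 0$,'' whereas you extract the graph condition coefficientwise from each product separately and subtract at order one --- a slightly more careful rendering of the same computation.
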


\begin{proof}
   For any $u, v \in P_2$, consider the elements $(r(u), u)$ and $(r(v), v)$ from $Gr (r)$. Realizing these elements in $Gr (r_{\mathbf{k}})$ and using the fact that $Gr (r_{\mathbf{k}})$  is a subalgebra of $(\mathcal{P} [ \! \! [t ] \! \! ], \odot_t)$, we get that
   \begin{align*}
        (r(u) , u) \odot_t (r(v), v) -  (r(v) , v) \odot_t (r(u), u) \in Gr ( r_{\mathbf{k}}).
   \end{align*}
   Diving this element by $t$ and then taking $t$ tends to $0$, we get that   $ \{ \! \! \{  (r(u), u),  (r(v), v) \} \! \! \} \in Gr (r)$. This shows that $Gr (r)$ is also a subalgebra of the Lie algebra $(\mathcal{P} = P_1 \oplus P_2, \{ \! \! \{ ~, ~ \} \! \! \})$. Hence the result follows.
\end{proof}

As a consequence of the above theorem, we obtain the following results.

\begin{corollary}
(i) Let $A, B$ be two commutative algebras and $\varphi : A \rightarrow B$ be an algebra homomorphism. Suppose there are associative algebra deformations $(A [ \! \! [ t ] \! \! ], \mu_t)$ and  $(B [ \! \! [ t ] \! \! ], \nu_t)$ of the commutative algebras $A$ and $B$, respectively. If the ${\bf k} [ \! \! [ t ] \! \! ]$-linear map $\varphi_{\bf k} : A [ \! \! [ t ] \! \! ] \rightarrow B [ \! \! [ t ] \! \! ]$ given by $\varphi_{\bf k} ( \sum_{i=0}^\infty a_i t^i ) = \sum_{i=0}^\infty \varphi (a_i) t^i $ is an associative algebra homomorphism from $(A [ \! \! [ t ] \! \! ], \mu_t )$ to $(B [ \! \! [ t ] \! \! ], \nu_t )$ then the map $\varphi : A \rightarrow B$ is a homomorphism between the corresponding semi-classical Poisson algebras.

\medskip

\noindent (ii) Let $A$ be a commutative algebra and $M$ be a module over it. Suppose there is an associative algebra deformation $(A [ \! \! [ t] \! \! ], \mu_t)$ of the commutative algebra $A$ and an $A [ \! \! [ t ] \! \! ]$-bimodule deformation $(M [ \! \! [ t ] \! \! ], l_t, r_t)$ of the $A$-module $M$.

    \medskip

\quad    (a) Let $D : A \rightarrow M$ be a derivation on the commutative algebra $A$ with values in the $A$-module $M$ such that its ${\bf k} [ \! \! [ t ] \! \! ]$-linear extension $D_{\bf k} : A [ \! \! [ t ] \! \! ] \rightarrow M [ \! \! [ t ] \! \! ]$ is a derivation on the deformed algebra $(A  [ \! \! [ t ] \! \! ], \mu_t )$ with values in the $A [ \! \! [ t ] \! \! ]$-bimodule $(M [ \! \! [ t ] \! \! ], l_t, r_t)$. Then the map $D: A \rightarrow M$ is a Poisson derivation on the semi-classical Poisson algebra on $A$ with values in the semi-classical representation on $M$.

    \medskip

  \quad   (b) Let $r: M \rightarrow A$ be a relative Rota-Baxter operator (of weight $0$) such that its ${\bf k} [ \! \! [ t ] \! \! ]$-linear extension $r_{\bf k}: M [ \! \! [ t ] \! \! ] \rightarrow A [ \! \! [ t ] \! \! ]$ is also a relative Rota-Baxter operator on the deformed algebra. Then the map $r: M \rightarrow A$ is a relative Rota-Baxter operator (of weight $0$) on the semi-classical Poisson algebra $A$ with respect to the semi-classical representation on $M$ \cite{chen-bai-guo}.

    \medskip

 \noindent (iii) Let $A, B$ be two commutative algebras and let the algebra $A$ acts on the algebra $B$. Suppose there are associative algebra deformations $(A [ \! \! [ t] \! \! ], \mu_t)$ and $(B [ \! \! [ t] \! \! ], \nu_t)$ of the commutative algebras $A$ and $B$, respectively, and there is an $A [ \! \! [ t ] \! \! ]$-bimodule deformation $(B[ \! \! [ t ] \! \! ])$ that defines an action of the associative algebra $(A [ \! \! [ t] \! \! ], \mu_t)$ on the algebra $(B [ \! \! [ t] \! \! ], \nu_t)$.

    \medskip

  \quad  (a) Let $r : B \rightarrow A$ be a relative Rota-Baxter operator of weight $1$ such that its ${\bf k}[ \! \! [ t ] \! \! ]$-linear extension $r_{\bf k} : B [ \! \! [ t ] \! \! ] \rightarrow A [ \! \! [ t ] \! \! ]$ is also a relative Rota-Baxter operator of weight $1$ on the deformed algebra. Then $r: B \rightarrow A$ is a relative Rota-Baxter operator of weight $1$ on the semi-classical Poisson algebra \cite{chen-bai-guo}.

    \medskip

  \quad  (b) Let $D : A \rightarrow B$ be a crossed homomorphism on $A$ with values in $B$ such that its ${\bf k}[ \! \! [ t ] \! \! ]$-linear extension $D_{\bf k} : A [ \! \! [ t ] \! \! ] \rightarrow B [ \! \! [ t ] \! \! ]$ is a crossed homomorphism on the deformed algebra $(A [ \! \! [ t ] \! \! ], \mu_t)$ with values in $B[ \! \! [ t ] \! \! ]$. Then the map $D: A \rightarrow B$ is a crossed homomorphism on the semi-classical Poisson algebra $A$ with values in the semi-classical Poisson algebra $B$.

    \medskip

  \noindent  (iv) Let $A$ be a commutative algebra and $r: A \rightarrow A$ be a Reynolds operator (resp. modified Rota-Baxter operator) on $A$. Suppose there is an associative algebra deformation $(A [ \! \! [t ] \! \! ], \mu_t)$ of the commutative algebra $A$ for which the ${\bf k} [ \! \! [t ] \! \! ]$-linear extension $ r_{\bf k}: A [ \! \! [ t ] \! \! ] \rightarrow A [ \! \! [ t ] \! \! ]$ is also a Reynolds operator (resp. modified Rota-Baxter operator) on the deformed associative algebra $(A [ \! \! [ t ] \! \! ], \mu_t )$. Then the map $r: A \rightarrow A$ is a Reynolds operator (resp. modified Rota-Baxter operator) on the corresponding semi-classical Poisson algebra.
\end{corollary}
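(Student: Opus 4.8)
The plan is to treat all four items uniformly by recognizing each operator as a deformation map in a suitable proto-twilled commutative algebra and then invoking the preceding theorem. For each case, the characterizations established in the examples of this section, read in the purely commutative/associative setting rather than the full Poisson setting, identify the operator in question with a deformation map. Concretely: in (i) a homomorphism $\varphi : A \to B$ is a deformation map in the direct product $(B \oplus A, \cdot_{\mathrm{dir}})$; in (ii)(a) a derivation $D : A \to M$ is a deformation map in the semidirect product $(M \oplus A, \cdot_\rtimes)$, while in (ii)(b) a relative Rota-Baxter operator $r : M \to A$ of weight $0$ is a deformation map in $(A \oplus M, \cdot_\ltimes)$; in (iii)(a) and (iii)(b) the weight-$1$ operator and the crossed homomorphism are deformation maps in the semidirect products built from the action of $A$ on $B$; and in (iv) the Reynolds (resp. modified Rota-Baxter) operator is a deformation map in the Reynolds product $(A \oplus A, \cdot_{\ltimes_{-}})$ (resp. the modified product $(A \oplus A, \cdot_{\mathrm{mod}})$). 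In every case the ambient commutative structure is the exact analogue of the Poisson structure appearing in the corresponding example, with the Lie bracket dropped.

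Next I would assemble the separate deformation data supplied in each item into a single associative algebra deformation $\odot_t$ of the ambient proto-twilled commutative algebra $\mathcal{P} = P_1 \oplus P_2$. For (i) one takes the componentwise deformation $\odot_t = \nu_t \oplus \mu_t$ of the direct product; for the semidirect-product cases one combines the algebra deformation $\mu_t$ of the acting algebra with the bimodule deformation $(l_t, r_t)$ (and, where relevant, the deformation $\nu_t$ of the second algebra) into a deformation of the semidirect product; and for (iv) the single deformation $\mu_t$ of $A$ induces a deformation of the Reynolds (resp. modified) product in the evident way. A short check confirms that each such $\odot_t$ is an associative deformation, so that $(\mathcal{P}[\![t]\!] \cong P_1[\![t]\!] \oplus P_2[\![t]\!], \odot_t)$ is a proto-twilled associative algebra over $\mathbf{k}[\![t]\!]$. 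With these identifications, the hypothesis in each item that the $\mathbf{k}[\![t]\!]$-linear extension of the operator is again the same type of operator on the deformed algebra is precisely the statement that $r_{\mathbf{k}}$ is a deformation map in $(\mathcal{P}[\![t]\!], \odot_t)$. The preceding theorem then yields that the operator is a deformation map in the semi-classical proto-twilled Poisson algebra $(\mathcal{P} = P_1 \oplus P_2, \odot, \{ \! \! \{ ~, ~ \} \! \! \})$.

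The step I expect to be the main obstacle is the final identification: one must verify that the semi-classical limit of the assembled proto-twilled associative algebra is the proto-twilled Poisson algebra obtained by endowing the same direct sum with the semi-classical Poisson structures of the components. That is, the semi-classical bracket $\{ \! \! \{ (x,u), (y,v) \} \! \! \} = (x,u) \odot_1 (y,v) - (y,v) \odot_1 (x,u)$ must decompose so that its diagonal parts recover the semi-classical brackets on $A$, $B$ (or $M$), while its off-diagonal parts reproduce the semi-classical representation, action, or cocycle governing the relevant example. Once one checks that the semi-classical limit of a direct product (resp. semidirect, twisted, or modified semidirect product) of commutative deformations is the direct product (resp. corresponding semidirect product) of the semi-classical Poisson algebras, the property of being a deformation map in the semi-classical proto-twilled Poisson algebra unwinds exactly into the desired statement in each item: $\varphi$ is a Poisson homomorphism in (i), $D$ is a Poisson derivation in (ii)(a), and so on through (iv). The remaining work consists only of the componentwise semi-classical-limit identities, which are routine.
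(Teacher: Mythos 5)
Your proposal is correct and follows essentially the same route as the paper: the corollary is stated there as an immediate consequence of the preceding theorem, and the intended (omitted) argument is exactly what you spell out --- realize each operator as a deformation map in the commutative analogue of the relevant proto-twilled structure (direct product, semidirect, twisted/Reynolds, modified), assemble the given deformations $\mu_t$, $\nu_t$, $(l_t,r_t)$ into an associative deformation $\odot_t$ of that structure so that the hypothesis says $r_{\bf k}$ is a deformation map over ${\bf k}[\![t]\!]$, and then identify the semi-classical proto-twilled Poisson algebra componentwise. Your write-up simply makes explicit the routine verifications the paper leaves implicit, so there is nothing to correct.
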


\medskip

\section{Cohomology of deformation maps: Examples}\label{sec6}

This section introduces the cohomology of a deformation map $r$ in a given proto-twilled Poisson algebra. In particular, we obtain the cohomology of Poisson homomorphisms, Poisson derivations, relative Rota-Baxter operators, crossed homomorphisms, twisted Rota-Baxter operators, Reynolds operators and modified Rota-Baxter operators on Poisson algebras. We will use the cohomology of a deformation map $r$ in the next section when we study the formal deformation theory of the operator $r$.

\medskip

Let $(\mathcal{P} = P_1 \oplus P_2, \odot , \{ \! \! \{ ~, ~\} \! \! \})$ be a proto-twilled Poisson algebra. For any linear map $r : P_2 \rightarrow P_1$, consider two new linear maps $\mathrm{Id} + \widetilde{r} : P_1 \oplus P_2 \rightarrow P_1 \oplus P_2$ and $\mathrm{Id} - \widetilde{r} : P_1 \oplus P_2 \rightarrow P_1 \oplus P_2$ defined by
\begin{align*}
    (\mathrm{Id} + \widetilde{r}) (x, u) := (x + r(u), u) \quad \text{ and } \quad (\mathrm{Id} - \widetilde{r}) (x, u) := (x - r(u), u),
\end{align*}
for $(x, u) \in P_1 \oplus P_2$. Then $\mathrm{Id} + \widetilde{r}$ is a linear isomorphism with the inverse $\mathrm{Id} - \widetilde{r}$. Using these maps, one may define a new (proto-twilled) Poisson algebra structure $(\mathcal{P} = P_1 \oplus P_2, \odot_r, \{ \! \! \{ ~, ~\} \! \! \}_r)$ given by
\begin{align*}
    (x, u) \odot_r (y, v) :=~& (\mathrm{Id} - \widetilde{r}) \big( ( \mathrm{Id} + \widetilde{r} ) (x, u) \odot  ( \mathrm{Id} + \widetilde{r} ) (y, v) \big), \\
    \{ \! \! \{ (x, u), (y, v) \} \! \! \}_r :=~& (\mathrm{Id} - \widetilde{r}) \{ \! \! \{ ( \mathrm{Id} + \widetilde{r} ) (x, u) , ( \mathrm{Id} + \widetilde{r} ) (y, v) \} \! \! \},
\end{align*}
for $(x, u), (y, v) \in P_1 \oplus P_2$. Moreover, the map $\mathrm{Id} + \widetilde{r} : P_1 \oplus P_2 \rightarrow P_1 \oplus P_2$ is an isomorphism of Poisson algebras from $(P_1 \oplus P_2, \odot_r, \{ \! \! \{ ~,~\} \! \! \}_r)$ to $(P_1 \oplus P_2, \odot, \{ \! \! \{ ~,~\} \! \! \})$.

\begin{proposition}
Let $(P_1 \oplus P_2, \odot, \{ \! \! \{ ~,~\} \! \! \})$ be a proto-twilled Poisson algebra and $r : P_2 \rightarrow P_1$ be a deformation map on it. Then $(P_1 \oplus P_2, \odot_r, \{ \! \! \{ ~,~\} \! \! \}_r)$ is quasi-twilled Poisson algebra with $P_2$ being a Poisson subalgebra of it.
\end{proposition}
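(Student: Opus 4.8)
The plan is to exploit the transport-of-structure isomorphism $\mathrm{Id}+\widetilde{r}$ rather than to verify the Poisson axioms by hand. First I would observe that the Poisson algebra axioms for $(P_1\oplus P_2, \odot_r, \{\!\!\{~,~\}\!\!\}_r)$ are already granted by the discussion preceding the statement: by construction the operations $\odot_r$ and $\{\!\!\{~,~\}\!\!\}_r$ are the pullbacks of $\odot$ and $\{\!\!\{~,~\}\!\!\}$ along the linear isomorphism $\mathrm{Id}+\widetilde{r}$, so this map is an isomorphism of Poisson algebras onto the Poisson algebra $(P_1\oplus P_2, \odot, \{\!\!\{~,~\}\!\!\})$, and hence its domain inherits a Poisson algebra structure. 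Thus the only thing left to prove is that $P_2$, identified with the subspace $\{(0,u): u\in P_2\}$, is a Poisson subalgebra for the new operations, which is exactly what makes the structure quasi-twilled with $P_2$ a Poisson subalgebra.

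The key observation is that $\mathrm{Id}+\widetilde{r}$ sends $P_2$ precisely onto the graph of $r$: for $u\in P_2$ one has $(\mathrm{Id}+\widetilde{r})(0,u) = (r(u), u)$, so $(\mathrm{Id}+\widetilde{r})(\{(0,u): u \in P_2\}) = Gr(r)$. I would then take $u,v\in P_2$ and compute $(0,u)\odot_r(0,v)$ directly from the definition: applying $\mathrm{Id}+\widetilde{r}$ to both arguments lands them in $Gr(r)$, and since $r$ is a deformation map the graph $Gr(r)$ is a Poisson subalgebra of $(P_1\oplus P_2, \odot, \{\!\!\{~,~\}\!\!\})$, whence $(r(u),u)\odot(r(v),v)\in Gr(r)$. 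Writing this element as $(r(w),w)$ for the appropriate $w\in P_2$ and then applying $\mathrm{Id}-\widetilde{r}$ yields $(r(w)-r(w), w) = (0,w)\in P_2$. Hence $(0,u)\odot_r(0,v)\in P_2$. The identical argument with $\{\!\!\{~,~\}\!\!\}$ in place of $\odot$ shows $\{\!\!\{(0,u),(0,v)\}\!\!\}_r\in P_2$, so $P_2$ is closed under both new operations and is therefore a Poisson subalgebra, establishing the claim. I would also record that, tracing through the second projection, the induced operations on $P_2$ are exactly $w = u\cdot_r v$ and $w = \{u,v\}_r$ from Proposition \ref{prop-p2}, so the Poisson subalgebra $P_2$ carries precisely the structure $(P_2)_r$.

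I do not expect any serious obstacle here: the entire content is the conceptual identification $(\mathrm{Id}+\widetilde{r})(P_2) = Gr(r)$, after which the deformation-map hypothesis does all the work. The only point requiring mild care is the bookkeeping that applying $\mathrm{Id}-\widetilde{r}$ to an element $(r(w),w)$ of the graph kills the first component, which is immediate from the definition of $\widetilde{r}$. A direct alternative would be to substitute the explicit formulas for $\odot_r$ and $\{\!\!\{~,~\}\!\!\}_r$ and invoke the deformation-map equations (\ref{dm1})--(\ref{dm2}), but that is strictly more computation for the same conclusion, so I would prefer the conceptual route through $Gr(r)$.
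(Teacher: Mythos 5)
Your proof is correct and takes essentially the same route as the paper: both compute $(0,u)\odot_r(0,v)=(\mathrm{Id}-\widetilde{r})\big((r(u),u)\odot(r(v),v)\big)$ (and likewise for the bracket) and then use the deformation-map property to see that the result lies in $\{0\}\oplus P_2$. The only difference is cosmetic — you invoke the graph-subalgebra definition of a deformation map directly, whereas the paper expands the product in the twelve structure maps and cites the coordinate equations (\ref{dm1})--(\ref{dm2}), which the paper itself states are equivalent to the graph condition.
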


\begin{proof}
    To show that $P_2$ is a Poisson subalgebra of $(P_1 \oplus P_2, \odot_r, \{ \! \! \{ ~,~\} \! \! \}_r)$, we observe that
    \begin{align}
        &(0, u) \odot_r (0, v) \nonumber \\
        &= (\mathrm{Id} - \widetilde{r}) \big( (r(u), u) \odot (r(v), v)  \big) \nonumber \\
        &= (\mathrm{Id} - \widetilde{r}) \big(    r(u) \cdot_1 r (v) + \nu_{u} r(v) + \nu_v r(u) + \theta (u, v) ~ \!, ~ \!  u \cdot_2 v + \mu_{r(u)} v + \mu_{r(v)} u + h (r(u), r(v))      \big)  \nonumber  \\
       &\stackrel{(\ref{dm1})}{=} \big(  0 ~\! , ~\! u \cdot_2 v + \mu_{r(u)} v + \mu_{r(v)} u + h (r(u), r(v))    \big) \in \{ 0 \} \oplus P_2 \label{comm-sub}
    \end{align}
    and similarly,
    \begin{align}
        &\{ \! \! \{  (0, u), (0, v) \} \! \! \}_r \nonumber \\
        &=   (\mathrm{Id} - \widetilde{r}) \big(  \{ r (u), r(v) \}_1 +\psi_u r(v) - \psi_v r(u) + \Theta (u, v) ~ \!, ~ \! \{ u, v\}_2 + \rho_{r (u)} v - \rho_{r (v)} u + H ( r (u), r(v))  \big) \nonumber \\
        &\stackrel{(\ref{dm2})}{=} \big( 0,  \{ u, v\}_2 + \rho_{r (u)} v - \rho_{r (v)} u + H ( r (u), r(v))    \big) \in \{ 0 \} \oplus P_2. \label{lie-sub}
    \end{align}
    This proves the result.
\end{proof}

It follows from (\ref{comm-sub}) and (\ref{lie-sub}) that the Poisson algebra structure on $P_2$ induced from $(P_1 \oplus P_2, \odot_r, \{ \! \! \{ ~,~\} \! \! \}_r)$ coincides with the one given in Proposition \ref{prop-p2}.

\begin{proposition}
    Let $(P_1 \oplus P_2, \odot, \{ \! \! \{ ~,~\} \! \! \})$ be a proto-twilled Poisson algebra and $r : P_2 \rightarrow P_1$ be a deformation map on it. Define maps $\nu_r , \psi_r : P_2 \rightarrow \mathrm{End} (P_1)$ by 
    \begin{align*}
        (\nu_r)_u x :=~& \nu_u x + r(u) \cdot_1 x - r (   \mu_x u + h ( r(u), x) ),\\
        (\psi_r)_u x :=~& \psi_u x + \{ r (u) , x \}_1 - r ( - \rho_x u + H (r(u), x) ),
    \end{align*}
    for $u \in P_2$ and $x \in P_1$. Then the triple $(P_1, \nu_r, \psi_r)$ is a representation of the induced Poisson algebra $(P_2)_r$.
\end{proposition}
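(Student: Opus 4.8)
The plan is to avoid verifying the four representation axioms directly from the explicit formulas, and instead to read them off from the transported Poisson structure $(\odot_r, \{\!\!\{~,~\}\!\!\}_r)$ using the preceding proposition. Recall that $(P_1 \oplus P_2, \odot_r, \{\!\!\{~,~\}\!\!\}_r)$ is a quasi-twilled Poisson algebra in which $\{0\} \oplus P_2$ is a Poisson subalgebra; in the decomposition (\ref{12-maps}) applied to this new structure, the subalgebra condition is exactly the vanishing of the two bilinear maps $P_2 \times P_2 \to P_1$, which is what (\ref{comm-sub}) and (\ref{lie-sub}) record. The first step is to identify $\nu_r$ and $\psi_r$ with projections: expanding the definitions of $\odot_r$ and $\{\!\!\{~,~\}\!\!\}_r$ through $\mathrm{Id} \pm \widetilde{r}$ gives, for $u \in P_2$ and $x \in P_1$,
\begin{align*}
(0,u) \odot_r (x,0) &= \big( (\nu_r)_u x, ~ (\mu_r)_x u \big), \\
\{\!\!\{ (0,u), (x,0) \}\!\!\}_r &= \big( (\psi_r)_u x, ~ (\rho_r)_x u \big),
\end{align*}
where the $P_1$-components are precisely the two expressions defined in the statement and the $P_2$-components $(\mu_r)_x u, (\rho_r)_x u$ are irrelevant below. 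In other words, $(\nu_r)_u$ and $(\psi_r)_u$ are nothing but the $\nu$- and $\psi$-maps of the quasi-twilled decomposition of $(\odot_r, \{\!\!\{~,~\}\!\!\}_r)$.

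With this identification in hand, I would obtain each representation axiom from a structural identity of the Poisson algebra $(\odot_r, \{\!\!\{~,~\}\!\!\}_r)$ followed by projection onto $P_1$. For the module axiom $(\nu_r)_{u \cdot_r v} = (\nu_r)_u (\nu_r)_v$, I would write $(0, u \cdot_r v) = (0,u) \odot_r (0,v)$, which is legitimate because $P_2$ is a subalgebra (Proposition \ref{prop-p2} together with (\ref{comm-sub})), apply associativity of $\odot_r$ to $\big((0,u) \odot_r (0,v)\big) \odot_r (x,0)$, and project; the only term that could obstruct the equality is the $P_2 \times P_2 \to P_1$ structure map evaluated on $u$ and the $P_2$-component produced at the previous stage, and this vanishes. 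The Lie-representation axiom $(\psi_r)_{\{u,v\}_r} = (\psi_r)_u (\psi_r)_v - (\psi_r)_v (\psi_r)_u$ follows identically from the Jacobi identity for $\{\!\!\{~,~\}\!\!\}_r$ applied to $\{\!\!\{ \{\!\!\{(0,u),(0,v)\}\!\!\}_r, (x,0)\}\!\!\}_r$.

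The two mixed compatibilities come from the Leibniz rule of the transported Poisson algebra. For the third axiom I would apply Leibniz to $\{\!\!\{ (0,u), (0,v) \odot_r (x,0)\}\!\!\}_r$, solve for $\{\!\!\{(0,u),(0,v)\}\!\!\}_r \odot_r (x,0)$, and project, using $(0,v)\odot_r(x,0) = ((\nu_r)_v x, (\mu_r)_x v)$ and $\{\!\!\{(0,u),(x,0)\}\!\!\}_r = ((\psi_r)_u x, (\rho_r)_x u)$; this yields $(\nu_r)_{\{u,v\}_r} = (\psi_r)_u (\nu_r)_v - (\nu_r)_v (\psi_r)_u$. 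The fourth axiom $(\psi_r)_{u \cdot_r v} = (\nu_r)_u (\psi_r)_v + (\nu_r)_v (\psi_r)_u$ is obtained symmetrically by applying the Leibniz rule to $\{\!\!\{ (0,u) \odot_r (0,v), (x,0)\}\!\!\}_r$.

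The genuinely delicate point is not any single computation but the bookkeeping of components: at each stage one feeds an element of $P_1$ together with an element of $P_2$ back into $\odot_r$ or $\{\!\!\{~,~\}\!\!\}_r$, and one must check that after projecting onto $P_1$ the only surviving contribution is the one built from $\nu_r$ or $\psi_r$, every other term being a $P_2 \times P_2 \to P_1$ structure map that vanishes because $P_2$ is a Poisson subalgebra. This is exactly the place where the preceding proposition, rather than the explicit formulas of the statement, does the work, and it is what makes the argument uniform across all four axioms.
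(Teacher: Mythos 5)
Your proposal is correct and takes essentially the same approach as the paper: both rest on the preceding proposition that $(P_1 \oplus P_2, \odot_r, \{\!\!\{~,~\}\!\!\}_r)$ is quasi-twilled with $P_2$ a Poisson subalgebra, and both identify $(\nu_r)_u x$ and $(\psi_r)_u x$ with $\mathrm{pr}_1\big((0,u)\odot_r(x,0)\big)$ and $\mathrm{pr}_1\{\!\!\{(0,u),(x,0)\}\!\!\}_r$, so that the representation is read off the transported structure rather than verified axiom by axiom from the explicit formulas. The only difference is one of emphasis: the paper writes out the projection computations in full and treats the passage from the quasi-twilled structure to a representation of $(P_2)_r$ on $P_1$ as an ``it follows'' step, whereas you compress the projection computation and instead spell out that passage, deriving the four axioms from associativity, Jacobi and Leibniz of $(\odot_r, \{\!\!\{~,~\}\!\!\}_r)$ together with the vanishing of the $P_2 \times P_2 \to P_1$ components recorded in (\ref{comm-sub}) and (\ref{lie-sub}).
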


\begin{proof}
    We have already seen that $( P_1 \oplus P_2, \odot_r, \{ \! \! \{ ~, ~ \} \! \! \}_r)$ is a quasi-twilled Poisson algebra with $P_2$ being a Poisson subalgebra of it (and the induced Poisson algebra structure being $(P_2)_r$). Moreover, it follows that the Poisson algebra $(P_2)_r$ has a representation on the space $P_1$ with the action maps $\overline{\nu}, \overline{\psi} : P_2 \rightarrow \mathrm{End} (P_1)$ given by
    \begin{align*}
     \overline{\nu}_u x :=    \mathrm{pr}_1 \big( (0, u) \odot_r (x, 0) \big) =~& \mathrm{pr}_1 \big(  (\mathrm{Id} - \widetilde{r}) (  (r(u), u) \odot (x, 0)  )   \big) \\
        =~&  \mathrm{pr}_1 \big(  (\mathrm{Id} - \widetilde{r}) (  r(u) \cdot_1 x + \nu_u x ~ \! , ~ \! \mu_x u + h (r(u), x)  ) \big) \\
        =~& \mathrm{pr}_1 \big(   r(u) \cdot_1 x + \nu_u x - r (\mu_x u + h ( r(u), x) ) ~ \! , ~ \! \mu_x u + h (r(u), x)   \big) \\
        =~& r(u) \cdot_1 x + \nu_u  x - r (\mu_x u + h (r(u), x) ) = (\nu_r)_u x
    \end{align*}
    and
    \begin{align*}
      \overline{\psi}_u x :=  \mathrm{pr}_1 \{ \! \! \{ (0, u), (x, 0) \} \! \! \}_r =~& \mathrm{pr}_1 \big(  (\mathrm{Id} - \widetilde{r}) (  \{ \! \! \{   (r(u), u) , (x, 0)   \} \! \! \} )  \big) \\
        =~& \mathrm{pr}_1 \big(  (\mathrm{Id} - \widetilde{r}) (  \{ r(u), x \}_1 + \psi_u x ~ \! , ~ \! - \rho_x u + H ( r(u) , x)  )  \big) \\
        =~& \mathrm{pr}_1 \big(  \{ r(u), x \}_1 + \psi_u x - r ( -\rho_x u + H ( r(u), x) ) ~ \! , ~ \! -\rho_x u + H (r (u), x)   \big) \\
        =~&  \{ r(u), x \}_1 + \psi_u x - r ( -\rho_x u + H ( r(u), x) ) = (\psi_r)_u x,
    \end{align*}
    for $u \in P_2$ and $x \in P_1$. This proves the result.
\end{proof}

Let $(\mathcal{P} = P_1 \oplus P_2 , \odot, \{ \! \! \{ ~, ~ \} \! \! \})$ be a proto-twilled Poisson algebra and $r : P_2 \rightarrow P_1$ be a deformation map on it. Then one may consider the FGV cochain complex of the induced Poisson algebra $(P_2)_r$ with coefficients in the representation $(P_1, \nu_r, \psi_r)$ given in the above proposition. More precisely, we set
\begin{align*}
    C^k ((P_2)_r, P_1 ) = \oplus_{\substack{m+n= k \\ m \neq 1}} C^{m, n} ((P_2)_r, P_1), \text{ where } C^{m, n} ((P_2)_r, P_1) \text{ is the space of } (m,n)\text{-cochains and}
\end{align*}
maps $\delta^{m,n}_\mathrm{H} : C^{m,n} ((P_2)_r, P_1)  \rightarrow C^{m+1, n} ((P_2)_r, P_1)$ ($m \neq 0$), $ \delta^{0,n}_\mathrm{H} : C^{0,n} ((P_2)_r, P_1) \rightarrow C^{2,n-1} ((P_2)_r, P_1)$ and $\delta_\mathrm{CE}^{m,n} : C^{m,n} ((P_2)_r, P_1)  \rightarrow C^{m, n+1} ((P_2)_r, P_1) $ by
\begin{align*}
   ( \delta^{m,n}_\mathrm{H} f) ((u_1 \otimes \cdots \otimes u_{m+1}) \otimes ~ & ( v_1 \wedge \cdots \wedge v_n ) ) 
   = (\nu_r)_{u_1} f ((u_2 \otimes \cdots \otimes u_{m+1}) \otimes ( v_1 \wedge \cdots \wedge v_n ) ) \\
   &+ \sum_{i=1}^m (-1)^i ~ \! f ((u_1 \otimes \cdots \otimes u_i \cdot_r u_{i+1} \otimes \cdots \otimes u_{m+1}) \otimes ( v_1 \wedge \cdots \wedge v_n ) ) \\
   &+ (-1)^{m+1} ~ \! (\nu_r)_{u_{m+1}} f ((u_1 \otimes \cdots \otimes u_{m}) \otimes ( v_1 \wedge \cdots \wedge v_n ) ),
\end{align*}
\begin{align*}
  \delta_\mathrm{H}^{0, n} :  C^{0, n} ((P_2)_r, P_1)  \hookrightarrow \mathrm{Hom} (P_2 \otimes \wedge^{n-1} P_2, P_1) = C^{1, n-1} ( (P_2)_r, P_1) \xrightarrow{ \delta_\mathrm{H}^{1, n-1}} C^{2, n-1} ((P_2)_r, P_1),
\end{align*}
and
\begin{align*}
    (\delta^{m,n}_\mathrm{CE} f) & ((u_1 \otimes \cdots \otimes u_{m}) \otimes ( v_1 \wedge \cdots \wedge v_{n+1} ) ) \\
    &= \sum_{i=1}^{n+1} (-1)^{i+1} ~ \! \bigg(  (\psi_r)_{v_i} f ((u_1 \otimes \cdots \otimes u_{m}) \otimes ( v_1 \wedge \cdots \wedge \widehat{v_i} \wedge \cdots \wedge v_{n+1} ) ) \\
    & ~ ~ - \sum_{j=1}^m f (( u_1 \otimes \otimes \{ v_i, u_j \}_r \otimes \cdots \otimes u_m ) \otimes (  v_1 \wedge \cdots \wedge \widehat{v_i} \wedge \cdots \wedge v_{n+1}) \bigg) \\
    & ~ ~ + \sum_{1 \leq i < j \leq n+1} (-1)^{i+j} ~ \! f ((u_1 \otimes \cdots \otimes u_{m}) \otimes (\{ v_i, v_j\}_r \wedge v_1 \wedge \cdots \wedge \widehat{v_i} \wedge \cdots \wedge \widehat{v_j} \wedge \cdots \wedge v_{n+1}) ),
\end{align*}
for any $f \in C^{m, n} ((P_2)_r, P_1)$ and $u_1, \ldots, u_{m+1}, v_1, \ldots, v_{n+1} \in P_2$. Finally, we define a map
\begin{align*}
   \delta^r_\mathrm{FGV} : C^k ((P_2)_r, P_1) \rightarrow C^{k+1} ((P_2)_r, P_1) ~~ \text{ by } ~~ \delta_\mathrm{FGV}^r (f) = \delta_\mathrm{H}^{m,n} (f) + (-1)^m ~ \! \delta^{m,n}_\mathrm{CE} (f),
\end{align*}
for $ f \in C^{m,n} ((P_2)_r, P_1).$
Then it turns out that $\{ C^\bullet ( (P_2)_r, P_1), \delta^r_\mathrm{FGV} \}$ is a cochain complex, called the cochain complex associated with the deformation map $r$. The corresponding cohomology groups are called the cohomology groups of the operator $r$, and they are denoted by $H^\bullet_\mathrm{FGV}  ((P_2)_r, P_1)$.

\medskip

 So far, we have defined the cohomology theory of a deformation map in a given proto-twilled Poisson algebra. In various examples of proto-twilled Poisson algebras and deformation maps there, we obtain the cohomologies of Poisson algebra homomorphisms, Poisson derivations, Rota-Baxter operators of weight $0$ and $1$, crossed homomorphisms, twisted Rota-Baxter operators, Reynolds operators and modified Rota-Baxter operators on a Poisson algebra. Thus, the cohomology for any of these operators is described by FGV cohomology of the corresponding induced Poisson algebra with coefficients in a suitable representation.

\medskip

\medskip

\noindent ({\bf Cohomology of Poisson homomorphisms}.) Let $(P_1, ~ \! \cdot_1 ~ \!, \{~, ~ \}_1)$ and $(P_2, ~ \! \cdot_2 ~ \!, \{~, ~ \}_2)$ be two Poisson algebras, and $\varphi : P_1 \rightarrow P_2$ be a Poisson homomorphism. Then the Poisson algebra  $(P_1, ~ \! \cdot_1 ~ \!, \{~, ~ \}_1)$ has a representation on the vector space $P_2$ with the maps $\mu_\varphi, \rho_\varphi : P_1 \rightarrow \mathrm{End} (P_2)$ which are respectively given by
\begin{align*}
    (\mu_\varphi )_x u : = \varphi (x) \cdot_2 u ~~~ \text{ and } (\rho_\varphi)_x u := \{ \varphi (x) , u \}_2, \text{ for } x \in P_1, u \in P_2.
\end{align*}
The FGV cohomology of the Poisson algebra $(P_1,  ~ \! \cdot_1 ~ \! , \{~, ~ \}_1)$ with coefficients in the above representation on $P_2$ is defined to be the cohomology of the Poisson homomorphism $\varphi$.

\medskip

\noindent ({\bf Cohomology of Poisson derivations.}) Let $(P, ~ \! \cdot ~ \! , \{ ~, ~ \})$ be a Poisson algebra and $(V, \mu, \rho)$ be a representation of it. For any Poisson derivation $D : P \rightarrow V$, the FGV cohomology of the given Poisson algebra $(P, ~ \! \cdot ~ \! , \{ ~, ~ \})$ with coefficients in the representation $(V, \mu, \rho)$ is defined to be the cohomology of the Poisson derivation $D$. It follows that the cohomology of a Poisson derivation $D$ is independent of $D$ itself.

\medskip

\noindent ({\bf Cohomology of relative Rota-Baxter operators of weight $0$.}) Let $(P, ~ \! \cdot ~ \! , \{ ~, ~ \})$ be a Poisson algebra, $(V, \mu, \rho)$ be a representation of it and $r: V \rightarrow P$ be a relative Rota-Baxter operator of weight $0$. Then $V$ inherits a Poisson algebra structure whose multiplication and the bracket are respectively given by
\begin{align*}
    u \cdot_r v := \mu_{r(u)} v + \mu_{r(v)} u ~~~~ \text{ and } ~~~~ \{ u, v \}_r := \rho_{r(u)} v - \rho_{r(v)} u, \text{ for } u , v \in V.
\end{align*}
Moreover, there is a representation of this induced Poisson algebra $(V, ~ \! \cdot_r ~ \!, \{ ~, ~ \}_r )$ on the space $P$ with the action maps $\nu_r , \psi_r : V \rightarrow \mathrm{End} (P)$ which are respectively given by
\begin{align*}
    (\nu_r)_u x = r(u) \cdot x - r (\mu_x u) ~~~ \text{ and } ~~~ (\psi_r)_u x = \{ r(u), x \} + r (\rho_x u), \text{ for } u \in V, x \in P.
\end{align*}
The FGV cohomology of the Poisson algebra $(V, ~ \! \cdot_r ~ \!, \{ ~, ~ \}_r )$ with coefficients in the representation $(P, \nu_r, \psi_r)$ is called the cohomology of the relative Rota-Baxter operator $r$ (of weight $0$).

\medskip

\noindent ({\bf Cohomology of relative Rota-Baxter operators of weight $1$.}) Let $(P_1, ~ \! \cdot_1 ~ \! , \{ ~, ~ \}_1)$ be a Poisson algebra that acts on another Poisson algebra $(P_2, ~ \! \cdot_2 ~ \! , \{ ~, ~ \}_2)$ by the maps $\mu, \rho : P_1 \rightarrow \mathrm{End} (P_2)$. Suppose $r: P_2 \rightarrow P_1$ is a relative Rota-Baxter operator of weight $1$. Then the space $P_2$ carries a new Poisson algebra structure whose multiplication and the bracket are respectively given by
\begin{align*}
    u \cdot_r v := u \cdot_2 v + \mu_{r(u)} v + \mu_{r(v)} u ~~~~ \text{ and } ~~~~ \{ u, v \}_r := \{ u, v\}_2 + \rho_{r(u)} v - \rho_{r(v)} u, \text{ for } u, v \in P_2.
\end{align*}
The Poisson algebra $(P_2, ~ \! \cdot_r ~ \! , \{ ~, ~ \}_r)$ has a representation on the space $P_1$ with the action maps $\nu_r , \psi_r : P_2 \rightarrow \mathrm{End}(P_1)$ given by
\begin{align*}
    (\nu_r)_u x = r(u) \cdot_1 x - r (\mu_x u) ~~~~ \text{ and } ~~~~ (\psi_r)_u x = \{ r(u), x \}_1 + r (\rho_x u), \text{ for } u \in P_2, x \in P_1.
\end{align*}
The corresponding FGV cohomology is defined to be the cohomology of the relative Rota-Baxter operator $r$ (of weight $1$).

\medskip

\noindent ({\bf Cohomology of Crossed homomorphisms.}) As before, let $(P_1, ~ \! \cdot_1 ~ \! , \{ ~, ~ \}_1)$ be a Poisson algebra that acts on another Poisson algebra $(P_2, ~ \! \cdot_2 ~ \! , \{ ~, ~ \}_2)$ by the maps $\mu, \rho : P_1 \rightarrow \mathrm{End} (P_2)$. Let $D : P_1 \rightarrow P_2$ be a crossed homomorphism on $P_1$ with values in $P_2$. Then the Poisson algebra $(P_1, ~ \! \cdot_1 ~ \! , \{ ~, ~ \}_1)$ has a new representation on the space $P_2$ with the action maps $\mu_D, \rho_D : P_1 \rightarrow \mathrm{End}(P_2)$ given by
\begin{align*}
    (\mu_D)_x u = \mu_x u + D(x) \cdot_2 u ~~~~ \text{ and } ~~~~  (\rho_D)_x u =\rho_x u + \{D (x), u \}_2, \text{ for } x \in P_1, u \in P_2
.\end{align*}
Then the FGV cohomology of the Poisson algebra $(P_1, ~ \! \cdot_1 ~ \! , \{ ~, ~ \}_1)$ with coefficients in the representation $(P_2, \mu_D, \rho_D)$ is defined to be the cohomology of the crossed homomorphism $D$.

\medskip

\noindent  ({\bf Cohomology of twisted Rota-Baxter operators.}) Let $(P, ~ \! \cdot ~ \! , \{ ~, ~ \})$ be a Poisson algebra, $(V, \mu, \rho)$ be a representation of it and $(h, H) \in Z^2 (P, V)$ be a Poisson $2$-cocycle. Let $r: V \rightarrow P$ be an $(h, H)$-twisted Rota-Baxter operator. Then the vector space $V$ can be given a Poisson algebra structure whose multiplication and bracket are respectively given by
\begin{align*}
    u \cdot_r v := \mu_{r(u)} v + \mu_{r (v)} u + h (r(u), r(v)) ~~~~  \text{ and } ~~~~ \{ u, v \}_r := \rho_{r(u)} v - \rho_{r(v) } u + H (r(u), r(v)), \text{ for } u, v \in V.
\end{align*}
The Poisson algebra $(V, ~ \! \cdot_r ~ \! , \{ ~, ~ \}_r)$ has a representation of the vector space $P$ with the action maps 
\begin{align*}
    (\nu_r)_u x = r(u) \cdot x - r (\mu_x u + h (r (u), x)) ~~~ \text{ and } ~~~ (\psi_r)_u x = \{ r (u) , x \} - r (- \rho_x u + H (r(u), x)),
\end{align*}
for $u \in V$, $x \in P$. The FGV cohomology of the Poisson algebra $(V, ~ \! \cdot_r ~ \! , \{ ~, ~ \}_r)$ with coefficients in the representation $(P, \nu_r, \psi_r)$ is defined to be cohomology of the operator $r$.

\medskip

\noindent ({\bf Cohomology of Reynolds operators.}) Let $(P, ~ \! \cdot ~ \! , \{ ~, ~ \})$ be a Poisson algebra and $r: P \rightarrow  P$ be a Reynolds operator on it. Then the space $P$ inherits a new Poisson algebra structure with the operations
\begin{align*}
    x \cdot_r y := r(x) \cdot y + x \cdot r(y)- r(x) \cdot r(y) ~~~~ \text{ and } ~~~~ \{ x, y \}_r := \{ r(x), y \}+\{x, r(y) \} - \{ r(x) , r(y) \},
\end{align*}
for $x, y \in P$. This Poisson structure has a representation on the vector space $P$ itself with the action maps
\begin{align*}
    (\mu_r)_x y = r(x) \cdot y - r ( x \cdot y - r(x) \cdot y) ~~~~ \text{ and } ~~~~ (\rho_r)_x y = \{ r(x) , y \} - r ( \{ x, y \} - \{ r(x), y \}), \text{ for } x, y \in P.
\end{align*}
The FGV cohomology of the Poisson algebra $(P, ~ \! \cdot_r ~ \!, \{ ~, ~ \}_r)$ with coefficients in the above representation on $P$ is said to be the cohomology of the Reynolds operator $r$.

\medskip

\noindent ({\bf Cohomology of modified Rota-Baxter operators.}) Let $(P, ~ \! \cdot ~ \! , \{ ~, ~ \})$ be a Poisson algebra and $r: P \rightarrow P$ be a modified Rota-Baxter operator on it. Then there is a new Poisson algebra structure on the vector space $P$ with the operations
\begin{align*}
    x \cdot_r y := r (x) \cdot y + x \cdot r (y) ~~~~ \text{ and } ~~~~ \{ x, y \}_r := \{ r(x), y\} + \{ x, r (y) \}, \text{ for } x, y \in P. 
\end{align*}
Moreover, this Poisson algebra $(P, ~ \! \cdot_r ~ \!, \{ ~, ~ \}_r)$ has a representation on the space $P$ itself with the action maps
\begin{align*}
    (\mu_r )_x y = r(x) \cdot y - r(x \cdot y) ~~ \text{ and } ~~ (\rho_r)_x y = \{ r(x), y\} - r \{ x, y \}, \text{ for } x, y  \in P.
\end{align*}
We call the cohomology of the Poisson algebra $(P, ~ \! \cdot_r ~ \!, \{ ~, ~ \}_r)$ with coefficients in the representation above as the cohomology of the modified Rota-Baxter operator $r$.

\medskip

\section{Linear and formal deformation theory of deformation maps}\label{sec7}
In this section, we study linear and formal deformation theory of a deformation map $r$ in a given proto-twilled Poisson algebra. In particular, we define Nijenhuis elements associated with a deformation map $r$ that generate linear deformations. Finally, we also find a sufficient condition for the rigidity of a deformation map $r$.

Let $(P_1 \oplus P_2, \odot, \{ \! \! \{ ~, ~ \} \! \! \})$ be a proto-twilled Poisson algebra and $r : P_2 \rightarrow P_1$ be a deformation map on it. A {\bf linear deformation} of $r$ is given by a parametrized sum $r_t = r + t ~ \!r_1$ (for some fixed $r_1 \in \mathrm{Hom}(P_2, P_1)$) that is also a deformation map for each value of the parameter $t \in {\bf k}$. In this case, we say that the map $r_1 \in \mathrm{Hom} (P_2, P_1)$ generates a linear deformation of $r$. It follows that $r_1 \in \mathrm{Hom}(P_2, P_1)$ generates a linear deformation of $r$ if and only if the following set of identities hold:
\begin{align}
    &r(u) \cdot_1 r_1 (v) + r_1 (u) \cdot_1 r(v) + \nu_u r_1 (v) + \nu_v r_1 (u)  \\
    & \qquad = r \big( \mu_{r_1 (u)} v + \mu_{r_1 (v)} u + h (r_1 (u),  r(v)) + h (r (u),  r_1 (v)) \big) + r_1 (u \cdot_r v ), \nonumber
    \end{align}
    \begin{align}
    r_1 (u) \cdot_1 r_1 (v) =  r_1 \big(  \mu_{r_1 (u)} v + \mu_{r_1(v)} u + h ( r(u) , r_1 (v) ) + h (r_1 (u), r(v))   \big) + r \big( h (r_1(u), r_1 (v)) \big),
     \end{align}
    \begin{align}
    r_1 \big(   h (r_1 (u) , r_1(v)) \big) = 0,
    \end{align}
    \begin{align}
    & \{ r(u), r_1 (v) \}_1 + \{ r_1 (u), r(v) \}_1 + \psi_u r_1 (v) - \psi_v r_1 (u)\\
    & \qquad = r \big( \rho_{r_1 (u)} v - \rho_{r_1 (v)} u + H (r_1 (u), r(v)) + H (r(u), r_1 (v))    \big) + r_1 (\{ u, v \}_r), \nonumber
     \end{align}
    \begin{align}
  \{ r_1 (u), r_1 (v) \}_1 = r_1 \big( \rho_{r_1 (u)} v - \rho_{r_1 (v)} u + H (r(u), r_1 (v)) + H (r_1 (u), r(v))    \big) + r \big(   H (r_1 (u), r_1 (v))  \big),
  \end{align}
    \begin{align}
 r_1 \big(   H (r_1 (u), r_1 (v))  \big) = 0,
\end{align}
for all $u, v \in P_2$.

Before we define `equivalence' between linear deformations, we need to first define homomorphisms between deformation maps. Let $( P_1 \oplus P_2, \odot, \{ \! \! \{ ~, ~ \} \! \! \})$ be a proto-twilled Poisson algebra and $r, r' : P_2 \rightarrow P_1$ be two deformation maps. A homomorphism from $r$ to $r'$ is a pair $(\varphi_1, \varphi_2)$ of linear maps $\varphi_1 : P_1 \rightarrow P_1$ and $\varphi_2 : P_2 \rightarrow P_2$ that make the map $\varphi_1 \oplus \varphi_2 : P_1 \oplus P_2 \rightarrow P_1 \oplus P_2$, $(x, u) \mapsto (\varphi_1 (x), \varphi_2 (u))$ into a homomorphism of Poisson algebras satisfying $\varphi_1 \circ r = r' \circ \varphi_2$.

\begin{definition}
    Let $r_t = r + t ~ \! r_1$ and $r_t' = r + t ~ \! r_1'$ be two linear deformations of $r$. Then $r_t$ is said to be {\em equivalent} with $r_t'$ if there exists an element $x_0 \in P_1$ such that the pair of maps
    \begin{align*}
        \big(  \varphi_{1,t} = \mathrm{Id}_{P_1} + t \{ x_0 , - \}_1 ~ \!, ~ \! \varphi_{2,t} = \mathrm{Id}_{P_2} + t ( \rho_{x_0} + H (x_0 , r - ))    \big)
    \end{align*}
    defines a homomorphism of deformation maps from $r_t$ to $r_t'$.
\end{definition}

Thus, for the above $\varphi_{1,t}$ and $\varphi_{2,t}$, we must have
\begin{align*}
    (\varphi_{1,t} \oplus \varphi_{2,t} ) ((x, u) \odot (y, v)) =~& (\varphi_{1,t} \oplus \varphi_{2,t})(x, u) \odot (\varphi_{1,t} \oplus \varphi_{2,t}) (y, v),\\
    (\varphi_{1,t} \oplus \varphi_{2,t} ) \{ \! \! \{ (x, u), (y, v) \} \! \! \} =~& \{ \! \! \{ (\varphi_{1,t} \oplus \varphi_{2,t} ) (x, u) , (\varphi_{1,t} \oplus \varphi_{2,t} ) (y, v) \} \! \! \},\\
    \varphi_{1,t} \circ r_t =~& r_t' \circ \varphi_{2,t}.
\end{align*}
Note that the last condition is equivalent to the following: for $u \in P_2$,
\begin{align}
    r_1 (u) - r_1' (u) =~& \{ r(u), x_0 \}_1 - r \big(  - \rho_{x_0} u + H (r(u), x_0) \big), \label{nij-ele} \\
    \{ r_1 (u) , x_0 \}_1 =~& r_1' \big(   -\rho_{x_0} u + H (r(u), x_0) \big). \label{nij-elem}
\end{align}
A linear deformation $r_t = r + t ~ \! r_1$ of a deformation map $r$ is said to be {\em trivial} if it is equivalent to the (undeformed) linear deformation $r_t' = r$. We will now consider Nijenhuis elements associated with a deformation map $r$ that generate trivial linear deformations.

\begin{definition}
Let $ (P_1 \oplus P_2, \odot, \{ \! \! \{ ~, ~ \} \! \! \})$ be a proto-twilled Poisson algebra and $r: P_2 \rightarrow P_1$ be a deformation map on it.
An element $x_0 \in P_1$ is said to be a {\em Nijenhuis element} associated with the deformation map $r$ if for all values of the parameter $t \in {\bf k}$, the map
    \begin{align*}
        P_1 \oplus P_2 \rightarrow P_1 \oplus P_2, ~ (x, u) \mapsto \big( x + t \{ x_0 , x \}_1 ~ \! , ~ \! u + t ( \rho_{x_0} u  + H (x_0, r (u)) ) \big)
    \end{align*}
    is a Poisson algebra homomorphism satisfying additionally
    \begin{align}\label{deform-nij}
        \big\{ \{ r(u), x_0 \}_1 - r (- \rho_{x_0} u + H (r(u), x_0 ) ) ~ \! , ~ \! x_0 \}_1 = 0, \text{ for all } u \in P_2.
    \end{align}
    The set of all Nijenhuis elements associated with $r$ is denoted by $\mathrm{Nij} (r)$.
\end{definition}

Let $r_t = r + t ~ \! r_1$ be a trivial linear deformation of $r$. Then there exists $x_0 \in P_1$ such that the pair of maps
$\big(   \varphi_{1,t} = \mathrm{Id}_{P_1} + t \{ x_0 , - \}_1 ~ \!, ~ \! \varphi_{2,t} = \mathrm{Id}_{P_2} + t ( \rho_{x_0} + H (x_0 , r - ) )  \big)$ defines a homomorphism of deformation maps from $r_t = r + t ~ \! r_1$ to $r_t' = r$. In the identities (\ref{nij-ele}) and (\ref{nij-elem}), by substituting $r_1' = 0$, we obtain (\ref{deform-nij}). Hence, it turns out that $x_0 \in P_1$ is a Nijenhuis element associated with the deformation map $r$. Conversely, if the map $\psi= 0$ in the proto-twilled Poisson algebra (see (\ref{12-maps}))  and $x_0 \in \mathrm{Nij}(r)$ is a Nijenhuis element associated with $r$, then it is not hard to see that $r_t = r + t ~ \! \delta^r_\mathrm{FGV} (x_0)$ is a trivial linear deformation of $r$.

\medskip

Let $(\mathcal{P} = P_1 \oplus P_2, \odot, \{ \! \! \{ ~, ~ \} \! \! \})$ be a proto-twilled Poisson algebra and $r : P_2 \rightarrow P_1$ be a deformation map on it. A {\bf formal one-parameter deformation} of $r$ is given by a formal power series
\begin{align*}
    r_t = r_0 + t r_1 + t^2 r_2 + \cdots \in \mathrm{Hom}(P_2, P_1) [ \! \! [ t] \! \! ] \text{ with } r_0 = r
\end{align*}
such that its ${\bf k} [ \! \! [t ] \! \! ]$-linear extension map (also denoted by the same notation) $r_t : P_2 [ \! \! [ t] \! \! ] \rightarrow P_1 [ \! \! [ t] \! \! ]$ is a deformation map in the proto-twilled Poisson algebra $(\mathcal{P} [ \! \! [ t] \! \! ] \cong P_1 [ \! \! [ t] \! \! ] \oplus P_2 [ \! \! [ t] \! \! ], \odot, \{ \! \! \{ ~, ~ \} \! \! \}).$ Here the Poisson algebra structure of $\mathcal{P}$ has been extended to a Poisson algebra structure on $\mathcal{P} [ \! \! [ t ] \! \! ]$ by  using ${\bf k} [ \! \! [ t ] \! \! ]$-(bi)linearity.

It turns out that $r_t = \sum_{i=0}^\infty t^i r_i $ (with $r_0 = r$) is a formal one-parameter deformation of $r$ if and only if for each $n \geq 1$ and $u, v \in P_2$,
\begin{align*}
    \sum_{i+j = n} r_i (u) \cdot_1 r_j (v) \! ~&+~ \! \nu_u r_n (v) + \nu_v r_n (u) \\
    =~& r_n (u \cdot_2 v) + \sum_{i+j = n} r_i \big(  \mu_{r_j (u)} v +\mu_{r_j (v)} u  \big) + \sum_{i+j+k = n} r_i \big(  h (r_j (u), r_k (v))   \big),\\
    \sum_{i+j = n} \{ r_i (u), r_j (v) \}_1  \! ~ &+ ~ \! \psi_u r_n (v) - \psi_v r_n (u) \\
    =~& r_n (\{ u, v \}_2) + \sum_{i+j = n} r_i \big(   \rho_{r_j (u)} v - \rho_{r_j (v)} u  \big) + \sum_{i+j + k = n} r_i \big(  H (r_j (u), r_k (v))  \big).
\end{align*}
For $n =1$, the above two identities are respectively equivalent to $\delta^{1,0}_\mathrm{H} (r_1) (u, v) = 0$ (or $\delta^{0,1}_\mathrm{H} (r_1) (u, v) = 0$) and $\delta^{0,1}_\mathrm{CE} (r_1) (u, v) = 0$. Hence
\begin{align*}
    \delta^r_\mathrm{FGV} (r_1) = \delta_\mathrm{H}^{0,1} (r_1) + \delta_\mathrm{CE}^{1,0} (r_1) = 0.
\end{align*}
This shows that the map $r_1 \in \mathrm{Hom} (P_2, P_1) = C^{0,1} \big( (P_2)_r, P_1 \big) = C^1 ((P_2)_r, P_1)$ is a $1$-cocycle in the cochain complex associated with the deformation map $r$. This is called the {\em infinitesimal} of the given formal deformation $r_t = \sum_{i=0}^\infty r_i t^i$. 

\begin{definition}
    Let $r_t = \sum_{i=0}^\infty t^i r_i $ and $r_t' =  \sum_{i=0}^\infty t^i r_i'$ (with $r_0 = r_0' = r$) be two formal one-parameter deformations of $r$. They are said to be {\bf equivalent} if there exists $x_0 \in P_1$ and maps $\varphi_{1,i} \in \mathrm{Hom} (P_1, P_1)$, $\varphi_{2,i} \in \mathrm{Hom} (P_2, P_2)$ for $i \geq 2$ such that the pair of maps
    \begin{align*}
        \big( \varphi_{1,t} = \mathrm{id}_{P_1} + t \{ x_0 , - \}_1 + \sum_{i \geq 2} t^i ~\!  \varphi_{1,i}  ~ \!  , ~ \! \varphi_{2,t} =  \mathrm{id}_{P_2} + t ( \rho_{x_0} + H (x_0, r -)) + \sum_{i \geq 2} t^i ~\! \varphi_{2,i} \big)
    \end{align*}
    defines a homomorphism of deformation maps from $r_t$ to $r_t'$.
\end{definition}
By expanding the condition $\varphi_{1,t} \circ r_t = r_t' \circ \varphi_{2,t}$ and equating coefficients of $t$ in both sides, we simply obtain the identity (\ref{nij-ele}). This, in turn, implies that (when the map $\psi=0$)
\begin{align*}
    r_1 (u) - r'_1 (u) = (\psi_r)_u x_0 = \delta^r_\mathrm{FGV} (x_0) (u), \text{ for } u \in P_2.
\end{align*}
As a summary, we get the following.

\begin{theorem}
    Let $(\mathcal{P} = P_1 \oplus P_2, \odot, \{ \! \! \{ ~, ~ \} \! \! \})$ be a proto-twilled Poisson algebra and $r : P_2 \rightarrow P_1$ be a deformation map on it. Then the infinitesimal of any formal one-parameter deformation of $r$ is a $1$-cocycle in the cochain complex associated with $r$. Moreover, when $\psi = 0$, the infinitesimals of equivalent formal one-parameter deformations are cohomologous, i.e., they correspond to the same element in $H^1_\mathrm{FGV} ( (P_2)_r, P_1)$.
\end{theorem}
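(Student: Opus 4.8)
The plan is to derive both assertions directly from the order-by-order expansion of the deformation-map equations, matching them term by term against the coboundary operator $\delta^r_\mathrm{FGV}$ of the cochain complex associated with $r$. For the first assertion, I would write $r_t = \sum_{i \geq 0} t^i r_i$ with $r_0 = r$ and extract the coefficient of $t^1$ from the two systems of identities characterizing a formal one-parameter deformation. In each resulting relation I would collect the terms in which $r_1$ is applied to a single combined argument: using the explicit forms of the induced product $\cdot_r$, the bracket $\{ ~, ~ \}_r$ and the actions $\nu_r, \psi_r$, the gathered $r_1(u \cdot_r v)$ terms together with the $\nu_r$-terms assemble exactly into the Harrison cocycle equation $\delta^{0,1}_\mathrm{H}(r_1) = 0$, and the gathered $r_1(\{ u, v \}_r)$ terms together with the $\psi_r$-terms assemble exactly into the Chevalley-Eilenberg cocycle equation $\delta^{0,1}_\mathrm{CE}(r_1) = 0$. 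Since $r_1 \in C^{0,1}((P_2)_r, P_1)$ and $\delta^r_\mathrm{FGV}(r_1) = \delta^{0,1}_\mathrm{H}(r_1) + \delta^{0,1}_\mathrm{CE}(r_1)$, the two vanishing conditions combine to give $\delta^r_\mathrm{FGV}(r_1) = 0$, so $r_1$ is a $1$-cocycle.

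For the second assertion I would use only the intertwining condition $\varphi_{1,t} \circ r_t = r_t' \circ \varphi_{2,t}$ coming from the definition of equivalence. Expanding both sides with $\varphi_{1,t} = \mathrm{Id}_{P_1} + t \{ x_0, - \}_1 + O(t^2)$ and $\varphi_{2,t} = \mathrm{Id}_{P_2} + t(\rho_{x_0} + H(x_0, r -)) + O(t^2)$ and comparing the coefficients of $t^1$ yields precisely the identity (\ref{nij-ele}), namely $r_1(u) - r_1'(u) = \{ r(u), x_0 \}_1 - r(- \rho_{x_0} u + H(r(u), x_0))$. I would then compare the right-hand side with $(\psi_r)_u x_0 = \psi_u x_0 + \{ r(u), x_0 \}_1 - r(- \rho_{x_0} u + H(r(u), x_0))$: under the hypothesis $\psi = 0$ the term $\psi_u x_0$ vanishes, so $r_1(u) - r_1'(u) = (\psi_r)_u x_0$ for all $u \in P_2$. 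Finally, viewing $x_0 \in P_1 = C^0((P_2)_r, P_1)$ as a $0$-cochain, its coboundary is $\delta^r_\mathrm{FGV}(x_0)(u) = (\psi_r)_u x_0$ (the Harrison component being trivial in degree zero), whence $r_1 - r_1' = \delta^r_\mathrm{FGV}(x_0)$ is a coboundary and $r_1, r_1'$ represent the same class in $H^1_\mathrm{FGV}((P_2)_r, P_1)$.

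The bookkeeping of the $t^1$-coefficients is routine; the step requiring the most care is the first assertion's reconciliation of the order-one relations with the Harrison and Chevalley-Eilenberg cocycle conditions for the twisted data $(\cdot_r, \{ ~, ~ \}_r, \nu_r, \psi_r)$. This identification leans on the symmetry of $h$, the antisymmetry of $H$, and the commutativity of $\cdot_1$ and antisymmetry of $\{ ~, ~ \}_1$ in order to match terms such as $\mu_{r_1(u)} v$ and $h(r(u), r_1(v))$ against the definition of $\nu_r$ (and their bracket analogues against $\psi_r$). The conceptually crucial point in the second assertion is the necessity of $\psi = 0$: without it one obtains only $r_1(u) - r_1'(u) = (\psi_r)_u x_0 - \psi_u x_0$, where the residual term $\psi_u x_0$ is in general not a coboundary, so the infinitesimals of equivalent deformations need not be cohomologous.
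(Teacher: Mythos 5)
Your proposal is correct and takes essentially the same route as the paper: the paper likewise extracts the coefficient of $t$ from the two deformation equations and identifies the resulting relations, via the definitions of $\cdot_r$, $\{~,~\}_r$, $\nu_r$, $\psi_r$, with the cocycle conditions $\delta^{0,1}_\mathrm{H}(r_1)=0$ and $\delta^{0,1}_\mathrm{CE}(r_1)=0$, and for equivalence it expands $\varphi_{1,t}\circ r_t = r_t'\circ\varphi_{2,t}$ at order $t$ to get $r_1(u)-r_1'(u)=\{r(u),x_0\}_1 - r\big(-\rho_{x_0}u + H(r(u),x_0)\big)$, which under $\psi=0$ equals $(\psi_r)_u x_0 = \delta^r_\mathrm{FGV}(x_0)(u)$. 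Your closing remark that $\psi=0$ is genuinely needed (otherwise the residual term $\psi_u x_0$ obstructs the coboundary identification) is exactly the reason for the paper's hypothesis.
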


In the following, we shall consider the rigidity of a deformation map $r$ and find a sufficient condition for the rigidity.

\begin{definition}
    Let $(\mathcal{P} = P_1 \oplus P_2, \odot, \{ \! \! \{ ~, ~ \} \! \! \})$ be a proto-twilled Poisson algebra. A deformation map $r: P_2 \rightarrow P_1$ is said to be {\bf rigid} if any formal one-parameter deformation $r_t = \sum_{i=0}^\infty t^i r_i $ of $r$ is equivalent to the undeformed one $r_t' = r$.
\end{definition}

\begin{theorem}
Let $(\mathcal{P} = P_1 \oplus P_2, \odot, \{ \! \! \{ ~, ~ \} \! \! \})$ be a proto-twilled Poisson algebra in which $\psi = 0$. Let $r: P_2 \rightarrow P_1$ is a deformation map such that $Z^1 ( (P_2)_r, P_1) = \delta_r ( \mathrm{Nij} (r))$, where $Z^1 ( (P_2)_r, P_1)$ is the set of all $1$-cocycles in the cochain complex associated with $r$. Then $r$ is rigid.
\end{theorem}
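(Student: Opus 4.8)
The plan is to remove the correction terms of a formal deformation $r_t=\sum_{i\ge 0}t^i r_i$ (with $r_0=r$) one degree at a time, by an induction on the order of the lowest nonzero term. At each stage the lowest nonvanishing coefficient will turn out to be a $1$-cocycle; the hypothesis $Z^1((P_2)_r,P_1)=\delta_r(\mathrm{Nij}(r))$ then lets us write it as $\delta^r_\mathrm{FGV}$ of a Nijenhuis element, and the Poisson automorphism attached to that Nijenhuis element is precisely the device needed to conjugate the term away. Assembling all these conjugations into a single equivalence carrying $r_t$ to the undeformed $r'_t=r$ is exactly the assertion of rigidity.

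First I would treat the degree-one term. Expanding the deformation equations of Section~\ref{sec7} at order $t$ shows, as already observed there, that the infinitesimal $r_1$ satisfies $\delta^r_\mathrm{FGV}(r_1)=0$, i.e. $r_1\in Z^1((P_2)_r,P_1)$. By hypothesis there is $x_0\in\mathrm{Nij}(r)$ with $r_1=\delta^r_\mathrm{FGV}(x_0)$. Since $x_0\in\mathrm{Nij}(r)$, the map $\Phi^{x_0}_t\colon (x,u)\mapsto (x+t\{x_0,x\}_1,\,u+t(\rho_{x_0}u+H(x_0,r(u))))$ is a Poisson algebra automorphism of $\mathcal P[\![t]\!]$; writing $\Phi^{x_0}_t=\varphi_{1,t}\oplus\varphi_{2,t}$ and setting $r'_t:=\varphi_{1,t}\circ r_t\circ\varphi_{2,t}^{-1}$, the pair $(\varphi_{1,t},\varphi_{2,t})$ is a homomorphism of deformation maps from $r_t$ to $r'_t$. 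The order-$t$ consequence (\ref{nij-ele}) of this homomorphism condition gives $r_1-r'_1=\delta^r_\mathrm{FGV}(x_0)$, where the hypothesis $\psi=0$ is what guarantees $\delta^r_\mathrm{FGV}(x_0)(u)=(\psi_r)_u x_0$. Hence $r'_1=0$ and $r'_t=r+O(t^2)$.

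For the inductive step, suppose we have reached $\tilde r_t=r+t^n\tilde r_n+O(t^{n+1})$ with $n\ge 2$. Collecting the $t^n$-coefficients of the deformation equations and using $\tilde r_i=0$ for $1\le i\le n-1$, every mixed sum collapses to the terms carrying a single index equal to $n$, and the resulting identities have exactly the same shape as the degree-one ones; they read $\delta^{0,1}_\mathrm{H}(\tilde r_n)=0$ and $\delta^{0,1}_\mathrm{CE}(\tilde r_n)=0$, so $\tilde r_n\in Z^1((P_2)_r,P_1)$ and $\tilde r_n=\delta^r_\mathrm{FGV}(y)$ for some $y\in\mathrm{Nij}(r)$. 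The key move is to replace the scalar parameter by $t^n$: because the statement that $\Phi^{y}_s$ is a Poisson homomorphism is a polynomial identity in $s$ holding for all $s\in{\bf k}$ (the Nijenhuis condition), it holds formally, so $\Phi^{y}_{t^n}$ is a Poisson automorphism of $\mathcal P[\![t]\!]$ of the form $\mathrm{Id}+t^n(\cdots)$. Conjugating $\tilde r_t$ by $\Phi^{y}_{t^n}$ leaves all coefficients in degrees $<n$ unchanged, and by the base-step computation applied in degree $n$ (again via (\ref{nij-ele}) with $t^n$ in place of $t$) it replaces $\tilde r_n$ by $\tilde r_n-\delta^r_\mathrm{FGV}(y)=0$. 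Thus $\tilde r_t$ becomes equivalent to a deformation starting at order $t^{n+1}$, and the induction continues.

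Finally I would assemble the infinite composite $\Psi_t=\cdots\circ\Phi^{y}_{t^2}\circ\Phi^{x_0}_{t}$. Each successive conjugating automorphism differs from the identity by a term of order $t^n$ with $n$ strictly increasing, so the product converges $t$-adically; moreover only the base step contributes in degree one, so the first-order part of $\Psi_t$ is $t\{x_0,-\}_1$ on $P_1$ and $t(\rho_{x_0}+H(x_0,r-))$ on $P_2$, meaning $\Psi_t$ is an equivalence of the form demanded by the definition and carries $r_t$ to $r'_t=r$. The two points requiring genuine care, rather than conceptual novelty, are verifying that the degree-$n$ slice of the deformation equations really reduces to the cocycle conditions once the lower coefficients vanish, and justifying both the substitution $s\mapsto t^n$ and the $t$-adic convergence of the infinite composition; the hypothesis $\psi=0$ is used throughout to ensure that the effect of each conjugation on the relevant coefficient is exactly $\delta^r_\mathrm{FGV}$ of the chosen Nijenhuis element.
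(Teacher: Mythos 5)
Your proof is correct and follows essentially the same strategy as the paper's: express the infinitesimal as $r_1=\delta^r_\mathrm{FGV}(x_0)$ for a Nijenhuis element $x_0$, conjugate by the automorphism pair attached to $x_0$ to kill the order-$t$ term, and then iterate order by order. The paper compresses your inductive step and the assembly of the equivalence into the phrases ``in the same way'' and ``applying this process repeatedly''; your write-up merely supplies the details it leaves implicit (the collapse of the mixed sums at order $t^n$ once lower coefficients vanish, the substitution $s\mapsto t^n$ in the Nijenhuis condition, and the $t$-adic convergence of the infinite composite).
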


\begin{proof}
    Take any formal one-parameter deformation $r_t = \sum_{i=0}^\infty t^i r_i $ of the operator $r$. Then we have seen that $r_1$ is a $1$-cocycle in the cochain complex associated with $r$. Hence from the hypothesis, we get that $r_1 = \delta^r_\mathrm{FGV} (x_0)$, for some $x_0 \in \mathrm{Nij} (r)$. We set
    \begin{align*}
        \varphi_{1,t} = \mathrm{Id}_{P_1} + t \{ x_0 , - \}_1 ~~~~  \text{ and } ~~~~ \varphi_{2,t} = \mathrm{Id}_{P_2} + t \big(  \rho_{x_0} + H (x_0 , r -)   \big).
    \end{align*}
    Then $r_t' := \varphi_{1,t} \circ r_t \circ \varphi_{2,t}^{-1}$ is a formal one-parameter deformation of $r$ and this is equivalent to $r_t$. We observe that
    \begin{align*}
        r_t' (u) \quad (\mathrm{mod }~ t^2) =~& (\mathrm{Id}_{P_1} + t \{ x_0, - \}_1) (r+ t r_1) (\mathrm{Id}_{P_2} - t (\rho_{x_0} + H (x_0, r -)))(u)  \quad (\mathrm{mod }~ t^2)\\
        =~&  (\mathrm{Id}_{P_1} + t \{ x_0, - \}_1) \big( r(u) + t ~ \! r_1 (u)- t ~ \! r (\rho_{x_0} u + H (x_0 , r(u))) \big)  \quad (\mathrm{mod }~ t^2) \\
        =~& r(u) + t \big(   r_1 (u) + \{ x_0, r(u) \}_1 - r (\rho_{x_0} u + H (x_0, r(u)) )   \big) \quad  (\mathrm{mod }~ t^2)  \\
        =~& r (u) \qquad (\because ~  r_1 = \delta^r_\mathrm{FGV} (x_0) ).
    \end{align*}
    This shows that the coefficient of $t$ in the power series expansion of $r_t'$ vanishes. In the same way, we can show that $r_t'$ is equivalent to some formal one-parameter deformation $r_t''$ whose coefficients in $t$ and $t^2$ both vanish. Applying this process repeatedly, we get that $r_t$ is equivalent to the undeformed one $r_t' = r$. This proves the result.
\end{proof}


\begin{remark}
    In the last part of this paper, we developed the cohomologies of various well-known operators on Poisson algebras. In particular, we defined the cohomology of a (relative) Rota-Baxter operator of weight $0$. On the other hand, it is well-known \cite{aguiar} that a (relative) Rota-Baxter operator of weight $0$ on a Poisson algebra induces a pre-Poisson algebra structure. Recall that a pre-Poisson algebra is a vector space endowed with a zinbiel algebra (or a commutative dendriform algebra) structure and a pre-Lie algebra structure satisfying certain compatibility conditions. It would be very interesting to find the cohomology theory of a pre-Poisson algebra (combining the cohomologies of the underlying zinbiel algebra and pre-Lie algebra) that controls the formal one-parameter deformations of the structure. Further, one may look for a homomorphism from the cohomology of a (relative) Rota-Baxter operator of weight $0$ on a Poisson algebra to the cohomology of the induced pre-Poisson algebra.
\end{remark}

\medskip

    \noindent {\bf Acknowledgements.} R. Mandal and A. Sahoo would like to acknowledge the financial support received from the Government of India through PMRF fellowships. 
    All the authors thank the Department of Mathematics, IIT Kharagpur for providing the beautiful academic atmosphere where the research has been done.
    
\medskip

\noindent {\bf Conflict of interest statement.} There is no conflict of interest.

\medskip

\noindent {\bf Data Availability Statement.} Data sharing does not apply to this article as no new data were created or analyzed in this study.

\end{document}